\numberwithin{equation}{section}
\newtheorem{mtheorem}{Theorem}
\newtheorem{theorem}{Theorem}[section]
\newtheorem{proposition}[theorem]{Proposition}
\newtheorem{lemma}[theorem]{Lemma}
\newtheorem{corollary}[theorem]{Corollary}
\theoremstyle{definition}
\newtheorem{definition}[theorem]{Definition}
\newtheorem{example}[theorem]{Example}
\newtheorem{construction}[theorem]{Construction}
\newtheorem{notation}[theorem]{Notation}
\theoremstyle{remark}
\newtheorem{remark}[theorem]{Remark}
\DeclareMathOperator{\im}{Im}
\DeclareMathOperator{\kernel}{Ker}
\DeclareMathOperator{\cok}{Cok}
\DeclareMathOperator{\Hom}{Hom}
\DeclareMathOperator{\Ext}{Ext}
\DeclareMathOperator{\holim}{holim}
\DeclareMathOperator{\Spec}{Spec}
\DeclareMathOperator{\HH}{H}
\renewcommand{\epsilon}{{\varepsilon}}
\renewcommand{\phi}{{\varphi}}
\newcommand{\define}[1]{{\emph{#1}}}
\newcommand{\id}{{\operatorname{id}}}
\newcommand{\fppf}{{\operatorname{fppf}}}
\newcommand{\opp}{{\operatorname{op}}}
\newcommand{\bZ}{{\mathbb{Z}}}
\newcommand{\bN}{{\mathbb{N}}}
\newcommand{\bP}{{\mathbb{P}}}
\newcommand{\bA}{{\mathbb{A}}}
\DeclareMathOperator{\PGL}{PGL}
\DeclareMathOperator{\GL}{GL}
\DeclareMathOperator{\BB}{B}
\newcommand{\GGm}{{\ensuremath{\mathbb{G}_{\operatorname{m}}}}}
\newcommand{\GGms}[1]{{\ensuremath{\mathbb{G}_{\operatorname{m},#1}}}}
\newcommand{\AbGerbe}{{\mathcal{A}b\mathcal{G}erbe}}
\newcommand{\AbGroup}{{\mathcal{A}b\mathcal{G}roup}}
\DeclareMathOperator{\Band}{Band}
\newcommand{\liset}{{\operatorname{lis-\acute{e}t}}}
\newcommand{\qc}{{\operatorname{qc}}}
\newcommand{\pc}{{\operatorname{pc}}}
\newcommand{\co}{{\operatorname{coh}}}
\newcommand{\pf}{{\operatorname{pf}}}
\newcommand{\sg}{{\operatorname{sg}}}
\newcommand{\bd}{{\operatorname{b}}}
\newcommand{\locbd}{{\operatorname{lb}}}
\newcommand{\D}{{\operatorname{D}}}
\newcommand{\coh}{{\ensuremath{\operatorname{Coh}}}}
\newcommand{\Rd}{\mathsf{R}}
\newcommand{\Ld}{\mathsf{L}}
\newcommand{\ra}{\rightarrow}
\newcommand{\xra}[2][]{\xrightarrow[{#1}]{#2}}
\newcommand{\sira}{\xra{\sim}}
\newcommand{\Qcoh}{{\ensuremath{\operatorname{Qcoh}}}}
\newcommand{\Mod}{{\ensuremath{\operatorname{Mod}}}}
\newcommand{\sheafHom}{{\mathcal{H}{om}}}
\newcommand{\sptag}[1]{\href{http://stacks.math.columbia.edu/tag/#1}{Tag~#1}}
\title[Decompositions]{Decompositions of derived categories of
  gerbes and of families of Brauer--Severi varieties}
\begin{document}
\subjclass[2010]{Primary 14F05; Secondary 14A20} 
\keywords{Semi-orthogonal decomposition, Derived category, Gerbe, Brauer--Severi variety, Algebraic stack}

\author{Daniel Bergh \and Olaf M.~Schn{\"u}rer}

\address{       
  Department of Mathematical Sciences\\
  Copenhagen University\\
  Universitetsparken~5\\
  2100~København~Ø\\ 
  Denmark
}
\email{dbergh@math.ku.dk}

\address{       
  Institut f\"ur Mathematik\\
  Universit\"at Paderborn\\
  Warburger Str. 100\\
  33098 Paderborn\\
  Germany
}

\email{olaf.schnuerer@math.uni-paderborn.de}


\begin{abstract}
It is well known that the category of quasi-coherent sheaves on a
gerbe banded by a diagonalizable group decomposes according to
the characters of the group. 
We establish the corresponding decomposition of the unbounded
derived category of complexes of sheaves with quasi-coherent
cohomology. This generalizes earlier work by Lieblich for gerbes
over schemes whereas our gerbes may live over arbitrary algebraic stacks. 

By combining this decomposition with the semi-orthogonal
decomposition for a 
projectivized vector bundle, we deduce a semi-orthogonal
decomposition of the derived category of a family of
Brauer--Severi varieties whose components can be described in terms
of twisted sheaves on the base. This reproves and generalizes a
result of Bernardara.
\end{abstract}


\maketitle
\tableofcontents

\section{Introduction}

The concept of a \emph{twisted sheaf} goes
back to Giraud 
and his formal treatment of the Brauer group in terms of \emph{gerbes}~\cite[Section~V.4]{giraud1971}.
During the last two decades,
twisted sheaves and their derived categories have gained a renewed interest,
starting with the thesis of C{\u{a}}ld{\u{a}}raru \cite{caldararu2000},
who studied moduli problems for semi-stable sheaves on varieties,
and the work by de~Jong~\cite{dejong2004}
and Lieblich~\cite{lieblich2004, lieblich2015}
on the period--index problem for the Brauer group.
For a survey on later developments,
we refer to Lieblich~\cite[Section~4]{lieblich2017}.

Although twisted sheaves have a rather elementary description in terms of 2-cocycles
(cf.~Remark~\ref{rem-twisted-2-cocycle}),
they are most naturally viewed as sheaves on gerbes banded by diagonalizable groups
(see~Definition~\ref{def:twisted}).
Our first result is the following theorem on the decomposition of the derived category associated to such a gerbe,
generalizing a result by Lieblich~\cite[Section~2.2.4]{lieblich2004}.

\begin{mtheorem}[{see Theorem~\ref{t:chi-splitting-D-qc-X}}]
  \label{t:chi-splitting-D-qc-X-intro}
  Let $S$ be an algebraic stack and 
  $\pi\colon X \to S$ a gerbe banded by a diagonalizable group
  $\Delta$ 
  with character group $A$.
  Then taking the coproduct defines an equivalence 
  \begin{equation}
    \label{eq-equivalence-derived-intro}
    \prod_{\chi \in A} \D_{\qc, \chi}(X) \sira \D_\qc(X),
    \qquad
    (\mathcal{F}_\chi)_{\chi \in A} 
    \mapsto 
    \bigoplus_{\chi \in A} \mathcal{F}_\chi,
  \end{equation}
  of triangulated categories,
  where $\D_{\qc, \chi}(X)$ denotes the full subcategory of
  $\D_\qc(X)$ of objects with $\chi$-homogeneous quasi-coherent
  cohomology (see Definition~\ref{def-homogeneous-sheaf}).
\end{mtheorem}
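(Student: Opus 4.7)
The strategy is to lift the abelian splitting $\Qcoh(X) \cong \prod_{\chi \in A} \Qcoh_\chi(X)$---obtained by decomposing each quasi-coherent sheaf on the $\Delta$-banded gerbe into its $\chi$-isotypic components---to the derived level. Since the projections $\pi_\chi \colon \Qcoh(X) \to \Qcoh_\chi(X)$ are exact and commute with arbitrary colimits (being direct-summand projections), applying them termwise to complexes yields triangulated functors $\pi_\chi \colon \D_\qc(X) \to \D_{\qc,\chi}(X)$; the image lands in $\D_{\qc,\chi}(X)$ because $\HH^n \circ \pi_\chi = \pi_\chi \circ \HH^n$, which is $\chi$-homogeneous. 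These functors assemble into the candidate quasi-inverse $\mathcal{F} \mapsto (\pi_\chi \mathcal{F})_{\chi \in A}$ of~\eqref{eq-equivalence-derived-intro}.

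Both composites can be verified at the level of representative complexes of quasi-coherent sheaves. For such a complex $\mathcal{F}$ representing an object of $\D_\qc(X)$, the abelian decomposition applied termwise gives an isomorphism of complexes $\mathcal{F} \cong \bigoplus_\chi \pi_\chi \mathcal{F}$. In the opposite direction, for a family $(\mathcal{F}_\chi)_{\chi \in A}$ with each $\mathcal{F}_\chi$ represented by a complex in $\Qcoh_\chi(X)$ (which one may always arrange by replacing with $\pi_\chi \mathcal{F}_\chi$), the termwise calculation yields $\pi_\chi(\bigoplus_{\chi'} \mathcal{F}_{\chi'}) \cong \mathcal{F}_\chi$, since $\pi_\chi$ commutes with direct sums, is the identity on $\chi$-homogeneous sheaves, and annihilates $\chi'$-homogeneous sheaves for $\chi' \neq \chi$. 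Both isomorphisms descend to $\D_\qc(X)$ provided the termwise coproduct represents the coproduct in $\D_\qc(X)$.

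The main obstacle is precisely this last point: for infinite $A$ one must establish that the coproduct $\bigoplus_{\chi \in A}$ in $\D_\qc(X)$ exists, is computed termwise on quasi-coherent representatives, and is exact, so that cohomology commutes with it. This requires some care in the setup of $\D_\qc(X)$ for an algebraic stack $X$, typically via the fact that $\Qcoh(X)$ is a Grothendieck abelian category admitting exact arbitrary coproducts; it is then automatic that $\pi_\chi$, being exact and colimit-preserving at the abelian level, commutes with coproducts in the derived category. Once this formal infrastructure is in place the theorem follows directly, and the extension of Lieblich's scheme-level result to an arbitrary algebraic stack $S$ requires no essential new ideas beyond verifying that the foundations function in this generality.
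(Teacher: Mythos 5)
Your proof rests on the claim that an object of $\D_\qc(X)$ can be represented by a complex of quasi-coherent sheaves, so that the projections $\pi_\chi \colon \Qcoh(X) \to \Qcoh_\chi(X)$ can be applied termwise. This is precisely where the argument breaks. By definition $\D_\qc(X)$ is the full subcategory of $\D(\Mod(X_\liset, \mathcal{O}_X))$ of complexes with quasi-coherent cohomology, and the comparison functor $\D(\Qcoh(X)) \to \D_\qc(X)$ is in general not an equivalence; by Hall--Neeman--Rydh it already fails to be one for $X = \BB\GL_n$ with $n \geq 2$ (see Remark~\ref{rem-trivial-case}), which is exactly the gerbe of trivializations $\BB\GL_n \to \BB\PGL_n$ that motivates the theorem. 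Without essential surjectivity of that functor your $\pi_\chi$ is simply not defined on $\D_\qc(X)$, and your closing assertion that ``the extension to an arbitrary algebraic stack $S$ requires no essential new ideas'' is the opposite of what the theorem is about: Lieblich's original argument does go through your route via $\D(\Qcoh(X))$ when $S$ is a quasi-compact separated scheme, and the whole point here is to find a replacement for that route.

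The paper circumvents this with a torsion-pair argument that stays inside the ambient category $\Mod(X_\liset, \mathcal{O}_X)$ throughout. For each subset $E \subseteq A$ with complement $F$, the pair $(\Qcoh_E(X), \Qcoh_F(X))$ is a torsion pair of abelian subcategories of the weak Serre subcategory $\Qcoh(X)$, and the $\Ext$-vanishing $\Ext^n_{\mathcal{O}_X}(\mathcal{E}, \mathcal{F}) = 0$ for $\mathcal{E} \in \Qcoh_E(X)$, $\mathcal{F} \in \Qcoh_F(X)$ is proved by reducing to the case where $S$ is affine and the gerbe is trivial --- and only there does the equivalence $\D(\Qcoh(X)) \sira \D_\qc(X)$ (Lemma~\ref{lem:affine-derived-decomp}) get invoked. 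Combined with effectiveness of truncation systems with quasi-coherent terms (Propositions~\ref{p:ab4-trunc} and~\ref{p:Dqc-truncation-complete}), the general decomposition theorem (Theorem~\ref{t:sod-DBA}) yields semi-orthogonal decompositions $\D_\qc(X) = \langle \D_{\qc,F}(X), \D_{\qc,E}(X)\rangle$, and varying $E$ over singletons gives the full splitting. Some substitute for this mechanism is needed; the termwise construction alone never produces a functor on $\D_\qc(X)$.
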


In the underived setting, 
a splitting similar as in the theorem above (see Theorem~\ref{t-qc-gerbe-split})
is a consequence of the well-known fact that quasi-coherent
representations of $\Delta$ split into subrepresentations
corresponding to the characters of the group~$\Delta$.

The result by Lieblich is the decomposition in Theorem~\ref{t:chi-splitting-D-qc-X-intro}
in the special case when $S$ is a quasi-compact, separated scheme.
He proves and uses the fact that in this setting the obvious functor
\begin{equation}
\label{eq-dqcoh-vs-dqcoh}
\D(\Qcoh(X)) \to \D_\qc(X)
\end{equation}
is an equivalence of categories~\cite[Proposition~2.2.4.6]{lieblich2004}.
Note that, if the base $S$ is a scheme, the functor \eqref{eq-dqcoh-vs-dqcoh} only fails to be an equivalence
in truly pathological situations.
However, as shown by Hall--Neeman--Rydh \cite[Theorem~1.3]{hnr2018},
the functor \eqref{eq-dqcoh-vs-dqcoh} fails to be an equivalence for large classes of algebraic stacks,
including the basic case when $X$ is the classifying stack $\BB \GL_n$ of the general linear group
(see Remark~\ref{rem-trivial-case}).
In particular, this kind of argument cannot be used to obtain the splitting of the derived category
of one of the most fundamental examples of gerbes banded by $\GGm$ ---
namely the \emph{gerbe of trivializations}
$\BB \GL_n \to \BB \PGL_n$ for the universal Brauer--Severi scheme
(see Remark~\ref{rem-universal-gerbe-of-trivializations}).

In order to prove Theorem~\ref{t:chi-splitting-D-qc-X-intro},
we study the more general problem whether suitable torsion pairs in abelian categories induce
semi-orthogonal decompositions on the level of derived categories.
We achieve the following theorem of independent interest.

\begin{mtheorem}[{see Theorem~\ref{t:sod-DbBA} and Theorem~\ref{t:sod-DBA}}]
  \label{t:sod-DBA-intro}
  Let $\mathcal{B}$ be a weak Serre subcategory of an abelian
  category $\mathcal{A}$.
  Let 
  $\mathcal{T}$ and
  $\mathcal{F}$ be abelian subcategories of $\mathcal{B}$
  forming a torsion pair $(\mathcal{T}, \mathcal{F})$ in
  $\mathcal{B}$.
  Assume that 
  \begin{equation*}
    \Ext^n_{\mathcal{A}}(T, F)=0
  \end{equation*}
  for all objects $T \in \mathcal{T}$, $F \in \mathcal{F}$ and
  all integers $n$. 
  Then 
  \begin{equation*}
    \D_{\mathcal{B}}^\bd(\mathcal{A})=
    \langle \D_{\mathcal{F}}^\bd(\mathcal{A}), 
    \D_{\mathcal{T}}^\bd(\mathcal{A})\rangle
  \end{equation*}
  is a semi-orthogonal decomposition.
  Furthermore, if the inclusion $\D_{\mathcal{B}}(\mathcal{A})
  \subset \D(\mathcal{A})$ 
  satisfies some technical
  conditions on homotopy limits and colimits
  formulated precisely in the statement of Theorem~\ref{t:sod-DBA},
  then
  \begin{equation*}
    \D_{\mathcal{B}}(\mathcal{A})=
    \langle \D_{\mathcal{F}}(\mathcal{A}), 
    \D_{\mathcal{T}}(\mathcal{A})\rangle
  \end{equation*}
  is a semi-orthogonal decomposition.
\end{mtheorem}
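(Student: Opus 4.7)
The plan is to first establish the bounded semi-orthogonal decomposition by combining a direct verification of semi-orthogonality with an inductive construction of the decomposition triangle, and then to bootstrap to the unbounded statement by passing to homotopy (co)limits of cohomological truncations. For semi-orthogonality, note that $\Hom_{\D(\mathcal{A})}(T, F[n]) = \Ext^n_{\mathcal{A}}(T, F) = 0$ for $T \in \mathcal{T}$ and $F \in \mathcal{F}$ is exactly the hypothesis. To promote this to the vanishing of $\Hom_{\D(\mathcal{A})}(Y, X)$ for arbitrary $Y \in \D^\bd_\mathcal{T}(\mathcal{A})$ and $X \in \D^\bd_\mathcal{F}(\mathcal{A})$, I would apply a standard devissage via the truncation triangles $\tau_{\leq n-1} Y \to \tau_{\leq n} Y \to \HH^n(Y)[-n]$, inducting on the cohomological amplitudes of $Y$ and $X$ and using the Five Lemma at each stage.

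For generation in the bounded case, I construct, for each $X \in \D^\bd_\mathcal{B}(\mathcal{A})$, a distinguished triangle $X_\mathcal{T} \to X \to X_\mathcal{F}$ with $X_\mathcal{T} \in \D^\bd_\mathcal{T}(\mathcal{A})$ and $X_\mathcal{F} \in \D^\bd_\mathcal{F}(\mathcal{A})$ by induction on the number of nonzero cohomology groups of $X$. The base case $X \cong B[i]$ with $B \in \mathcal{B}$ is immediate from the torsion short exact sequence $0 \to T \to B \to F \to 0$ in $\mathcal{B}$. For the inductive step, let $n$ be the top cohomological degree and consider the truncation triangle $\tau_{\leq n-1} X \to X \to \HH^n(X)[-n]$. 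The inductive hypothesis supplies decomposition triangles for both outer terms, and the octahedral axiom combined with the semi-orthogonality just established (which uniquely lifts the connecting morphism from the torsion-free piece of $\tau_{\leq n-1} X$ to a shift of the torsion piece of $\HH^n(X)[-n]$) assembles these into the required decomposition of $X$.

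For the unbounded case, given $X \in \D_\mathcal{B}(\mathcal{A})$, the truncations $\tau_{\geq -n} \tau_{\leq n} X$ lie in $\D^\bd_\mathcal{B}(\mathcal{A})$ and carry canonical decomposition triangles by the bounded case. Functoriality, guaranteed again by semi-orthogonality, assembles the torsion and torsion-free parts into compatible systems indexed by $n$, and passing to $\hocolim$ (or the appropriate homotopy (co)limit construction dictated by the technical hypotheses) produces the desired triangle for $X$. The conditions alluded to in the statement of Theorem~\ref{t:sod-DBA} are precisely what is needed to guarantee that these homotopy (co)limits remain inside $\D_\mathcal{T}(\mathcal{A})$ and $\D_\mathcal{F}(\mathcal{A})$ and that the assembled triangle really has middle term $X$.

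The main obstacle in the bounded case is the inductive gluing step: one must show that the octahedral assembly of the decompositions of $\tau_{\leq n-1} X$ and $\HH^n(X)[-n]$ is both possible and canonical, which is precisely where the $\Ext$-vanishing hypothesis is used essentially, via semi-orthogonality, to pin down the connecting morphism. In the unbounded case, the difficulty shifts to the convergence of the homotopy (co)limit constructions and their interaction with the standard $t$-structure on $\D(\mathcal{A})$—this is the role of the technical hypotheses that must be imposed on the embedding $\D_\mathcal{B}(\mathcal{A}) \subset \D(\mathcal{A})$, and I expect the heart of the proof of Theorem~\ref{t:sod-DBA} to be devoted to their verification.
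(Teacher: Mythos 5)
Your overall strategy matches the paper's, and the bounded-case argument is sound, though presented differently. The paper's proof of Theorem~\ref{t:sod-DbBA} is terser: it observes that the torsion short exact sequence places every object of $\mathcal{B}$ in the triangulated subcategory generated by $\mathcal{T}$ and $\mathcal{F}$, that truncation then promotes this to all of $\D^\bd_\mathcal{B}(\mathcal{A})$, and that the $\Ext$-vanishing hypothesis plus truncation gives the semi-orthogonality; the existence of decomposition triangles then follows from the standard fact that generation plus orthogonality imply a semi-orthogonal decomposition. Your explicit octahedral induction on cohomological amplitude is a legitimate alternative that essentially re-proves this standard fact inline; it is more verbose but gains nothing extra here. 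What you rightly flag — that semi-orthogonality pins down the connecting morphism and makes the gluing canonical — is exactly what the paper exploits in the unbounded case to get a compatible system of decomposition triangles.

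For the unbounded case your sketch is in the right spirit but structurally off in one detail. You propose to truncate simultaneously on both sides via $\tau_{\geq -n}\tau_{\leq n} X$ and pass to a single homotopy (co)limit. That object does not sit in a single inverse or direct system with transition maps in one direction, so there is no single $\holim$ or $\hocolim$ that reassembles $X$ from these double truncations. The paper instead runs a clean two-stage argument: for $B \in \D^-_\mathcal{B}(\mathcal{A})$ it takes the \emph{inverse truncation system} $(\tau_{\geq -n} B)_n$, uses the bounded decomposition and uniqueness to extract a compatible inverse truncation system $(T_n)_n$ of torsion parts, invokes \emph{effectiveness} (the precise technical hypothesis, introduced in Definition~\ref{d:truncation-system}) to conclude that $(T_n)$ is obtained by truncation from its homotopy limit $T$, and then checks the cone lies in $\D^-_\mathcal{F}$ by inspecting cohomology; this gives $\D^-_\mathcal{B} = \langle \D^-_\mathcal{F}, \D^-_\mathcal{T}\rangle$, after which the dual argument with \emph{direct} truncation systems and homotopy colimits handles arbitrary objects of $\D_\mathcal{B}$. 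The notion of effectiveness is doing the real work — it is not just ensuring that the limit stays in $\D_\mathcal{T}$ and hits the right middle term, but that the inverse system of torsion parts is \emph{standard}, i.e., is really the truncation tower of some honest object. Without singling out this condition your sketch cannot close the gap between "compatible system" and "homotopy limit with the desired cohomology", which is precisely what Lemma~\ref{l:holim-truncations} and Remark~\ref{rem:effective} supply in the paper.
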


The theory of gerbes,
hinging on the theory of stacks,
used to have a reputation of being inaccessible,
and Giraud's book on non-abelian cohomology \cite{giraud1971} introducing them is notorious for being a hard read,
even among experts.
However,
in recent years the theory of algebraic stacks has become a main stream part of algebraic geometry,
much owing to the excellent text books by Laumon--Moret-Bailly~\cite{lmb2000} and Olsson~\cite{olsson2016},
and to the Stacks Project~\cite{stacks-project}.
Moreover,
as explained in the thesis by Lieblich \cite{lieblich2004},
only a small part of Giraud's theory is actually needed to
develop a satisfactory theory of twisted sheaves.
We illustrate the effectiveness of the language of gerbes by giving a short,
simple and conceptually appealing proof of the following theorem,
which generalizes a result by Bernardara \cite[Theorem~4.1]{bernardara-BS-schemes}.

\begin{mtheorem}[{see Theorem~\ref{t:sod-brauer-severi} and
    Corollary~\ref{c:sod-brauer-severi}}]
  \label{t:sod-brauer-severi-intro}
  Let $S$ be an algebraic stack and $\pi\colon P \to S$ a
  Brauer--Severi 
  scheme of relative dimension $n$ over $S$.
  Let $\beta \in \HH^2(S_\fppf, \GGm)$ denote the Brauer class of
  $\pi$. 
  Then, for any $a \in \bZ$, the category $\D_\qc(P)$ admits a
  semi-orthogonal 
  decomposition
  \begin{equation}
    \label{eq:SOD-Dpf}
    \D_\qc(P) = \langle \mathcal{D}_a, \ldots, \mathcal{D}_{a+n}
    \rangle 
  \end{equation}
  into right admissible subcategories $\mathcal{D}_i$,
  where the category $\mathcal{D}_i$ is equivalent to 
  the category $\D_{\qc,\beta^i}(S)$
  of $\beta^i$-twisted complexes on $S$ (see
  Definition~\ref{def:derived-twisted} and Remark~\ref{r:image-twisted}). 
  
  Similarly, we have corresponding decompositions
  for the category of perfect complexes, the category of locally bounded pseudo-coherent complexes
  and the singularity category.  
  In particular, we also have such a decomposition for $\D^b_\co(P)$ when
  $S$ is Noetherian.
\end{mtheorem}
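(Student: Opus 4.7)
The plan is to reduce to the standard Orlov (Beilinson) semi-orthogonal decomposition for a projective bundle by passing to the $\GGm$-gerbe of trivializations of $\pi$, and then to overlay the character decomposition provided by Theorem~\ref{t:chi-splitting-D-qc-X-intro}. Concretely, let $\rho \colon X \to S$ be the $\GGm$-gerbe of trivializations of $\pi$; by construction it is banded by $\GGm$ with class $\beta \in \HH^2(S_\fppf, \GGm)$, and on $X$ there is a tautological rank $(n+1)$ vector bundle $V$ of $\GGm$-weight one such that the pullback $P_X := P \times_S X$ is an honest projective bundle $\bP_X(V)$. Write $q \colon P_X \to X$ and $p \colon P_X \to P$ for the two projections. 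Then $p$ is itself a $\GGm$-gerbe, and the tautological line bundle $\mathcal{O}_{P_X}(1)$, which has $\GGm$-weight $-1$, trivializes it; hence Theorem~\ref{t:chi-splitting-D-qc-X-intro} applied to $p$ identifies $\D_\qc(P_X)$ with $\bigoplus_{\chi \in \bZ} \D_{\qc,\chi}(P_X)$ and exhibits $p_\ast$ as an equivalence $\D_{\qc,0}(P_X) \sira \D_\qc(P)$.

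Now apply Orlov's projective bundle formula to $q$ (in the form needed for algebraic stacks, which will be established in the paper) to obtain
\begin{equation*}
  \D_\qc(P_X) \;=\; \bigl\langle q^\ast \D_\qc(X) \otimes \mathcal{O}_{P_X}(a),\,\ldots,\, q^\ast \D_\qc(X) \otimes \mathcal{O}_{P_X}(a+n) \bigr\rangle,
\end{equation*}
and split each factor $\D_\qc(X)$ by characters using Theorem~\ref{t:chi-splitting-D-qc-X-intro}. Since $q^\ast$ preserves weights and tensoring with $\mathcal{O}_{P_X}(i)$ shifts the weight by $-i$, the subcategory $q^\ast \D_{\qc,\chi}(X) \otimes \mathcal{O}_{P_X}(i)$ lands in $\D_{\qc,\chi-i}(P_X)$. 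For each $i = a,\ldots,a+n$, taking $\chi = i$ and composing with $p_\ast$ yields a fully faithful embedding
\begin{equation*}
  \Phi_i \colon \D_{\qc,i}(X) \longrightarrow \D_\qc(P), \qquad F \mapsto p_\ast\bigl(q^\ast F \otimes \mathcal{O}_{P_X}(i)\bigr).
\end{equation*}
Restricting Orlov's decomposition to the weight-zero summand of $\D_\qc(P_X)$ is compatible with the direct-sum splitting by characters and produces the semi-orthogonal decomposition $\D_\qc(P) = \langle \im \Phi_a, \ldots, \im \Phi_{a+n} \rangle$; semi-orthogonality is inherited from Orlov's formula. Finally, $\D_{\qc,i}(X)$ is by definition the category of $\beta^i$-twisted complexes on $S$, so setting $\mathcal{D}_i := \im \Phi_i$ gives the required equivalence $\mathcal{D}_i \simeq \D_{\qc,\beta^i}(S)$.

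For the variants stated in the last paragraph of the theorem --- perfect complexes, locally bounded pseudo-coherent complexes, the singularity category, and $\D^\bd_\co$ when $S$ is Noetherian --- one checks that both the Orlov embeddings $q^\ast(-) \otimes \mathcal{O}_{P_X}(i)$ and the character splitting of Theorem~\ref{t:chi-splitting-D-qc-X-intro} preserve each of the relevant finiteness subcategories, so the decomposition restricts and admissibility descends to each case. The main technical obstacle I anticipate is uniformly verifying this compatibility across all variants, together with a careful book-keeping of $\GGm$-weights and sign conventions to ensure that the $i$-th component is $\beta^i$-twisted rather than $\beta^{-i}$-twisted; both issues are, however, largely mechanical once the stacky projective bundle formula and Theorem~\ref{t:chi-splitting-D-qc-X-intro} are in hand.
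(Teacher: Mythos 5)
Your proposal follows essentially the same route as the paper's proof of Theorem~\ref{t:sod-brauer-severi}: pass to the gerbe of trivializations $B\to S$, pull $P$ back to the honest projectivized bundle $\bP(\mathcal{E}^\vee)\to B$, apply the stacky Orlov decomposition of \cite[Theorem~6.7]{BS-conservative} to that projective bundle, overlay the character splitting of Theorem~\ref{t:chi-splitting-D-qc-X}, observe that twisting by $\mathcal{O}(i)$ shifts degree by $-i$, restrict to the degree-zero block, and transport along the $\GGm$-gerbe $\gamma\colon \bP(\mathcal{E}^\vee)\to P$ using Proposition~\ref{t:equivalence-epi-derived}. The ``technical obstacle'' you flag for the variants (perfect, locally bounded pseudo-coherent, singularity) is handled in the paper by quoting the already-established compatibility of the Orlov functors $\Psi_i$ with those subcategories from \cite[Corollary~6.8]{BS-conservative}, after which admissibility descends formally; so nothing new is needed there.
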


Bernardara's result is the semi-orthogonal decomposition 
\eqref{eq:SOD-Dpf} in the case when $S$ is a Noetherian, separated
scheme with the property that any pair of points in $S$ is
contained in an open affine subscheme (cf.\ Remark~\ref{r:bernardara}).
Apart from our Theorem~\ref{t:chi-splitting-D-qc-X-intro},
the main ingredient in our proof
of Theorem~\ref{t:sod-brauer-severi-intro} is a classical
result by Orlov regarding a semi-orthogonal decomposition of the derived
category of a projectivized vector bundle \cite[Theorem~2.6]{orlov1992},
which we previously have generalized to algebraic stacks using the technique of \emph{conservative descent}
\cite[Theorem~6.7, Corollary~6.8]{BS-conservative}.

While finishing the manuscript, we were informed by Brown and~Moulinos of a result \cite[Theorem 3.1]{bm2019}
which is similar to our Theorem~\ref{t:sod-brauer-severi-intro}.

\subsection*{Outline}
In Section~\ref{sec:torsion-pairs-decomp},
we work mostly in the general setting of derived categories of abelian categories.
This section contains the proof of Theorem~\ref{t:sod-DBA-intro}.
In Section~\ref{sec:gerbes-and-twisted} and Section~\ref{sec:quasi-coher-sheav},
we recall and collect some basic facts on gerbes,
bandings and twisted sheaves which are scattered in the literature.
We expect most of these results to be well known to experts.
In Section~\ref{sec:decomp-deriv-categ}, we turn our attention to derived categories of gerbes and twisted sheaves.
We generalize some of the fundamental results by C{\u{a}}ld{\u{a}}raru and Lieblich \cite{caldararu2000, lieblich2004}
to the case when we work over an algebraic stack.
Most notably we prove Theorem~\ref{t:chi-splitting-D-qc-X-intro}.
Finally, in the last section,
we give a brief summary on Giraud's treatment of the Brauer group \cite[Section~V.4]{giraud1971},
and use the theory to prove Theorem~\ref{t:sod-brauer-severi-intro}.

\subsection*{Acknowledgments}
The first named author was partially supported by the Danish National
Research Foundation through the Niels Bohr Professorship of Lars Hesselholt,
by the Max Planck Institute for Mathematics in Bonn,
and by the DFG through SFB/TR 45.
The second named author was partially supported by the DFG through a postdoctoral fellowship and through SFB/TR 45.

\subsection*{Notation and conventions}
\label{sec:conventions}

If $\mathcal{A}$ is an abelian category, we write
$\D(\mathcal{A})$ for its unbounded derived category. 
We write $\D^{\leq n}(\mathcal{A})$ and 
$\D^{\geq n}(\mathcal{A})$ for its full subcategories of objects
whose cohomology is concentrated in the indicated degrees, where
$n \in \bZ$.
The (intelligent) truncation functors on $\D(\mathcal{A})$ are
denoted $\tau_{\leq n}$ and $\tau_{\geq n}$. 
If $E$ and $F$ are objects of $\D(\mathcal{A})$, we 
sometimes abbreviate
$\Ext^n_\mathcal{A}(E,F):=\Hom_{\D(\mathcal{A})}(E, \Sigma^nF)$.

By a \emph{diagonalizable group} we mean a diagonalizable group scheme
over $\Spec \bZ$ that is isomorphic to the spectrum of the group
ring of a finitely generated abelian group.

We use the definitions of algebraic space and algebraic stack given in the stacks project
\cite[\sptag{025Y}, \sptag{026O}]{stacks-project}.
In particular, we do not impose any separatedness conditions. 
The algebraic stacks form a 2-category where each 2-morphism is invertible.
We will follow the common practice to often suppress 2-categorical details.
For instance, we will usually simply write morphism of stacks instead of 1-morphism
of stacks, subcategory of the category of stacks instead of 2-subcategory etc.

For the theory of quasi-coherent modules and derived categories in the context of algebraic stacks,
we basically follow the approach of Laumon--Moret-Bailly~\cite[Sections~12 and~13]{lmb2000},
which we now briefly recapitulate.
Let $X$ be an algebraic stack.
We denote the topos of sheaves on the big \emph{fppf} site over $X$ by $X_\fppf$
and the topos of sheaves on the \emph{lisse--étale} site over $X$ by $X_\liset$.
For $\tau \in \{\fppf, \liset\}$,
we let $\Mod(X_\tau, \mathcal{O}_X)$ denote category of $\mathcal{O}_X$-modules in $X_\tau$,
and $\Qcoh(X_\tau, \mathcal{O}_X)$ the full subcategory of quasi-coherent modules
(see~\cite[\sptag{03DL}]{stacks-project}).
\begin{remark}
\label{rem-coherator}
The categories $\Qcoh(X_\tau, \mathcal{O}_X)$ and $\Mod(X_\tau, \mathcal{O}_X)$ are Grothendieck abelian categories
\cite[\sptag{0781}]{stacks-project}.
In particular,
since the inclusion functor $\Qcoh(X_\tau, \mathcal{O}_X) \hookrightarrow \Mod(X_\tau, \mathcal{O}_X)$ is colimit preserving,
it admits a right adjoint.
This right adjoint is called the \emph{quasi-coherator}.
\end{remark}

The construction $(-)_\tau$ functorially associates an adjoint pair 
\begin{equation}
\label{eq-functorial-topos}
f^*\colon Y_\tau \rightleftarrows X_\tau \colon f_*
\end{equation}
of functors to any morphism $f\colon X \to Y$ of algebraic stacks.
Moreover, the functor $f^*$ preserves finite products.
In particular, the adjunction \eqref{eq-functorial-topos} induces an adjunction
\begin{equation}
\label{eq-functorial-module}
f^*\colon \Mod(Y_\tau, \mathcal{O}_Y) \rightleftarrows \Mod(X_\tau, \mathcal{O}_X) \colon f_*.
\end{equation}
Moreover, the pull-back functor $f^*$ preserves quasi-coherence.
The functor
\begin{equation}
\label{eq-qc-push-forward}
f_*\colon \Qcoh(X_\tau, \mathcal{O}_X) \to \Qcoh(Y_\tau, \mathcal{O}_Y)
\end{equation}
is defined as the right adjoint of the restriction of $f^*$ to
quasi-coherent sheaves.
The existence of such a right adjoint is guaranteed by the quasi-coherator
(see~Remark~\ref{rem-coherator}).
\begin{remark}
\label{rem-qc-pf}
If $\tau = \liset$, then the functor \eqref{eq-qc-push-forward} is the restriction of the push-forward for sheaves of modules
provided that $f$ is quasi-compact and quasi-separated,
but not in general.
If $\tau = \fppf$, then \eqref{eq-qc-push-forward} seldom is the restriction of the push-forward for sheaves of modules.
\end{remark}
The category $\Mod(X_\tau, \mathcal{O}_X)$ has a closed symmetric monoidal
structure with operations given by the usual tensor product  and the sheaf hom functor.
The tensor operation preserves quasi-coherence and is preserved by pull-backs.
In particular, we get an induced symmetric monoidal structure on $\Qcoh(X_\tau, \mathcal{O}_X)$.

\begin{remark}
\label{rem-qc-hom}
The symmetric monoidal structure on $\Qcoh(X_\tau, \mathcal{O}_X)$ induced by the tensor product
is also closed.
This follows from the existence of the quasi-coherator (see~Remark~\ref{rem-coherator}).
For $\tau = \liset$,
the internal hom functor coincides with the sheaf hom functor provided that the first argument is of finite presentation, but not in general.
\end{remark}

There is an obvious restriction functor $X_\fppf \to X_\liset$,
which is compatible with push-forward.
This induces a monoidal equivalence
\begin{equation}
\label{eq-change-site}
\Qcoh(X_\fppf, \mathcal{O}_X) \sira \Qcoh(X_\liset, \mathcal{O}_X),
\end{equation}
functorial in $X$ (see~\cite[\sptag{07B1}]{stacks-project}).

We denote the derived category $\D(\Mod(X_\liset, \mathcal{O}_X))$ simply by $\D(X)$.
The category $\D_\qc(X)$ is defined as the full subcategory of $\D(X)$ consisting of
complexes with quasi-coherent cohomology sheaves.
The derived tensor product on $\D(X)$ preserves objects in $\D_\qc(X)$,
giving the latter category a symmetric monoidal structure, which is closed.
\begin{remark}
\label{rem-dqc-hom}
The internal hom functor on $\D_\qc(X)$ is the restriction of the derived sheaf hom functor provided
that the first argument is perfect, but not in general
(see~\cite[Section~1.2]{hr2017}). 
\end{remark}
Given an arbitrary morphism $f\colon X \to Y$ of algebraic stacks,
we have an adjoint pair of functors
\begin{equation}
\label{eq-pb-pf}
f^*\colon \D_\qc(Y) \to \D_\qc(X), \qquad f_*\colon \D_\qc(Y) \to \D_\qc(X),
\end{equation}
and the formation of such pairs is functorial.
\begin{remark}
\label{rem-derived-pb}
The functor $f^*$ in \eqref{eq-pb-pf} coincides with the derived pull-back between the ambient categories provided that
$f$ is smooth.
For general $f$, the construction of $f^*$ is somewhat technical owing to the fact that the adjunction
\eqref{eq-functorial-topos}, for $\tau = \liset$,
does not form a morphism of topoi,
as was first noted by Behrend~\cite[Warning~5.3.12]{behrend-derived-l-adic-stacks}.
The construction is worked out by Olsson~\cite{olsson2007} in the bounded case and by Laszlo--Olsson~\cite[Example~2.2.5]{lo2008} in the unbounded case.
We refer to \cite[Section~1]{hr2017} for a concise summary on this.
\end{remark}
\begin{remark}
\label{rem-derived-pf}
The push-forward $f_*$ in \eqref{eq-pb-pf} coincides with the derived push-forward provided that
$f$ is \emph{concentrated} \cite[Definition~2.4, Theorem~2.6(2)]{hr2017},
but not in general.
\end{remark}

\begin{remark}
It is also possible to construct $\D_\qc(X)$ using the topos $X_\fppf$.
This approach is taken in the stacks project \cite[\sptag{07B6}]{stacks-project}.
For $\tau = \fppf$ the adjunction \eqref{eq-functorial-topos} does give a morphism of topoi,
making the construction of the pull-back in \eqref{eq-pb-pf} easier.
On the other hand the inclusion $\Qcoh(X_\fppf, \mathcal{O}_X) \subset \Mod(X_\fppf, \mathcal{O}_X)$
is not exact \cite[\sptag{06WU}]{stacks-project},
making the actual construction of $\D_\qc(X)$ quite technical.
We will not use this point of view in the present article.
\end{remark}

\begin{notation}
\label{not-sheaf-op}
We conclude this section by summarizing our notational conventions regarding sheaves on stacks.
In the discussion below, we let $\tau \in \{\fppf, \liset\}$.
\begin{enumerate}
\item
We simply write $\Qcoh(X)$ for any of the categories $\Qcoh(X_\tau, \mathcal{O}_X)$
when no distinction is necessary.
This is motivated by the functorial equivalence~\eqref{eq-change-site}.
\item
The monoidal operations on the categories
$$
\Qcoh(X_\tau, \mathcal{O}_X), \qquad
\Mod(X_\tau, \mathcal{O}_X), \qquad
\D(X), \qquad
\D_\qc(X)
$$
are always denoted $-\otimes-$ and $\sheafHom(-,-)$, respectively.
In particular, we omit any derived decorations.
The precise category we are working on will always be inferable from context.
Note that in the cases $\Qcoh(X_\tau, \mathcal{O}_X)$ and~$\D_\qc(X)$
the functor $\sheafHom$ does not always coincide with $\sheafHom$ on
the ambient category (cf.~Remark~\ref{rem-qc-hom} and~Remark~\ref{rem-dqc-hom}). 
\item
The pull-backs and push-forwards along a morphism $f\colon X \to Y$ are always denoted by $f^*$ and~$f_*$,
respectively.
Which of the categories
$$
(-)_\tau,\qquad
\Mod((-)_\tau, \mathcal{O}_{(-)}), \qquad
\Qcoh((-)_\tau, \mathcal{O}_{(-)}),
$$
$$
\D(-), \qquad
\D_\qc(-),
$$
we are working with will always be inferable from context.
In particular, 
we do not use any derived decorations or any particular decorations for
push-forward of quasi-coherent sheaves (cf.~Remark~\ref{rem-qc-pf} and~Remark~\ref{rem-derived-pf}).
\end{enumerate}
\end{notation}


\section{Torsion pairs and
decompositions of derived categories}
\label{sec:torsion-pairs-decomp}

We give criteria ensuring that a torsion pair in an abelian
category gives rise to a semi-orthogonal decomposition on the
level of derived categories. Our main assumption on the torsion
pair is that its two components are abelian subcategories of the
ambient abelian category. 
For bounded derived categories this is straightforward (see
Theorem~\ref{t:sod-DbBA}) as soon as some foundational results
for such torsion pairs are established (see
Proposition~\ref{p:abelian-torsion-pair}). Our result for
unbounded derived categories (see Theorem~\ref{t:sod-DBA}) needs
some technical assumptions on effectiveness of inverse and direct
truncation systems introduced in
Section~\ref{sec:notes-htpy-limits}. Fortunately, these
assumptions are satisfied for the derived category $\D_\qc(X)$ of
an algebraic stack (see Example~\ref{ex:ringed-topos-effective}
and Proposition~\ref{p:Dqc-truncation-complete}).

\subsection{Torsion pairs}
\label{sec:torsion-pairs}

We remind the reader of the notion of a \emph{torsion pair}. 
Sometimes
the terminology \emph{torsion theory} is used in the
literature. Standard references are
\cite[I.1]{beligiannis-reiten-torsion-theories},
\cite[1.12]{borceux-cat-2}, \cite{dickson-torsion}. 

\begin{definition}
  \label{d:torsion-pair}
  A \define{torsion pair} in an abelian category $\mathcal{B}$ is
  a pair 
  $(\mathcal{T}, \mathcal{F})$
  of strictly full subcategories of $\mathcal{B}$ such that the
  following two conditions hold:
  \begin{enumerate}[label=(TP{\arabic*})]
  \item 
    \label{enum:TP-ses}
    Any object $B \in \mathcal{B}$ fits into a short exact
    sequence
    \begin{equation*}
      0 \ra T \ra B \ra F \ra 0
    \end{equation*}
    with $T \in \mathcal{T}$ and $F \in \mathcal{F}$. 
  \item 
    \label{enum:TP-Hom-TF=0}
    We have $\Hom_\mathcal{B}(T, F)=0$ for all objects $T \in
    \mathcal{T}$ and $F \in \mathcal{F}$.
  \end{enumerate}
\end{definition}

\begin{remark}
  \label{rem:adjoints-f-and-t}
  Let $(\mathcal{T}, \mathcal{F})$
  be a torsion pair in an abelian category $\mathcal{B}$. 
  The axioms of a torsion pair immediately imply that 
  the short exact sequence in \ref{enum:TP-ses} is unique up to
  unique isomorphism and functorial in $B \in \mathcal{B}$.
  In particular, there are
  functors $t \colon \mathcal{B} \ra \mathcal{T}$ and $f \colon
  \mathcal{B} \ra \mathcal{F}$ together with 
  natural
  transformations
  $t \ra \id$ and $\id \ra f$ giving rise to short exact
  sequences 
  \begin{equation*}
    0 \ra t(B) \ra B \ra f(B) \ra 0,
  \end{equation*}
  for each object $B \in \mathcal{B}$. The object $B$ is in
  $\mathcal{T}$ (resp.\ $\mathcal{F})$ if and only if $f(B)=0$
  (resp.\ $t(B)=0$). 
  The functor $t$ is right adjoint to the inclusion functor
  $\mathcal{T} \ra \mathcal{B}$, and the functor $f$ is left
  adjoint to the inclusion functor 
  $\mathcal{F} \ra \mathcal{B}$.
\end{remark}

Recall that an \emph{abelian subcategory} of an abelian category
is a strictly full subcategory which is abelian and whose
inclusion functor is exact. 
A \emph{weak Serre subcategory}
is an abelian subcategory which is closed under
extensions in the ambient category.
If it is even closed under taking subobjects
and quotients in the ambient category, it is called a \emph{Serre
  subcategory}.  

\begin{remark}
  In the following we are only interested in torsion pairs 
  $(\mathcal{T}, \mathcal{F})$ where both
  $\mathcal{T}$ and $\mathcal{F}$ are abelian subcategories. 
  These torsion pairs are traditionally called
  \emph{hereditary} and \emph{cohereditary}. We do not use this
  terminology. 
\end{remark}

\begin{example}
  \label{e:A2-reps}
  Let $\mathcal{B}$ be the category of
  representations of the $A_2$-quiver $1 \ra 2$ over
  some ring. If $\mathcal{T}$ (resp.\ $\mathcal{F}$) denotes the
  subcategory of
  representations supported at the vertex $2$ (resp.\ $1$)
  then $(\mathcal{T}, \mathcal{F})$ is clearly a torsion pair in
  $\mathcal{B}$. Moreover, both $\mathcal{T}$ and
  $\mathcal{F}$ are abelian subcategories of $\mathcal{B}$.
\end{example}

\begin{proposition}
  \label{p:abelian-torsion-pair}
  Let $(\mathcal{T}, \mathcal{F})$
  be a torsion pair in an abelian category $\mathcal{B}$. If both
  $\mathcal{T}$ and $\mathcal{F}$ are abelian subcategories of
  $\mathcal{B}$, then the following statements are true.
  \begin{enumerate}
  \item 
    \label{enum:Hom-FT}
    $\Hom_\mathcal{B}(F, T)=0$ for all objects $F \in
    \mathcal{F}$ and $T \in 
    \mathcal{T}$.
  \item 
    \label{enum:adjoints-exact}
    Both the right adjoint to the inclusion functor
    $\mathcal{T} \ra \mathcal{B}$ and 
    the left adjoint to the inclusion functor
    $\mathcal{F} \ra \mathcal{B}$ are exact,
    i.e., in the notation of Remark~\ref{rem:adjoints-f-and-t},
    both functors $t$ and $f$ are exact.
  \item 
    \label{enum:serre-subcats}
    Both $\mathcal{T}$ and $\mathcal{F}$ are Serre
    subcategories of $\mathcal{B}$.
  \item 
    \label{enum:Ext-TF}
    $\Ext^n_\mathcal{B}(T,F)=0$ for all objects
    $T \in \mathcal{T}$, $F \in \mathcal{F}$ and all integers $n
    \in \bZ$. 
  \end{enumerate}
\end{proposition}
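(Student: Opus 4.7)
My plan is to verify the four items in the logical order (c), (a), (b), (d), each building on the previous.

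For (c), I observe that for any torsion pair, $\mathcal{T}$ is closed under quotients in $\mathcal{B}$ and $\mathcal{F}$ under subobjects: for $T \twoheadrightarrow Q$ with $T \in \mathcal{T}$ the composition $T \to Q \to f(Q)$ vanishes by (TP2), forcing $f(Q) = 0$, and the other half is dual. The opposite closures exploit the abelian-subcategory hypothesis. Since $t$ is a right adjoint it is automatically left exact, and applied to a SES $0 \to T' \to T \to T/T' \to 0$ (note $T/T' \in \mathcal{T}$ by what was just shown) it forces $t(T') = T'$, so $T' \in \mathcal{T}$. The dual argument using the automatic right exactness of $f$ shows $\mathcal{F}$ is closed under quotients; both subcategories are thus Serre.

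Given (c), part (a) is immediate: for $\phi\colon F \to T$ the image $\im\phi$ is a quotient of $F$ and a subobject of $T$, hence lies in $\mathcal{F} \cap \mathcal{T}$, which is zero by applying the torsion SES to any such object. For (b), I show $f$ is left exact (the argument for $t$ is dual, or can be obtained via the $3 \times 3$ lemma applied to the torsion SES columns). Given a SES $0 \to B' \to B \to B'' \to 0$, a direct computation identifies $\ker\!\bigl(f(B') \to f(B)\bigr)$ with $(B' \cap t(B))/t(B')$, the intersection being taken in $B$. By (c), $B' \cap t(B)$ is a $\mathcal{T}$-subobject of $B'$, and the defining universal property of $t(B')$ as the largest such subobject forces $B' \cap t(B) = t(B')$; so the kernel vanishes.

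Finally, (d) is handled by induction on $n$, the case $n < 0$ being trivial for the convention at hand. The case $n = 0$ is (TP2). For $n = 1$, applying the now-exact functor $t$ (by (b)) to an extension $0 \to F \to E \to T \to 0$ produces $t(E) \xrightarrow{\sim} T$, and the inverse composed with $t(E) \hookrightarrow E$ splits the sequence. For $n \geq 2$, factor an $n$-extension as a Yoneda product $E_1 \cdot E_2$ with $E_2\colon 0 \to C \to X_0 \to T \to 0$ its last piece. Applying $t$ to $E_2$ produces $\xi_2\colon 0 \to t(C) \to t(X_0) \to T \to 0$, and a pushout identification $X_0 \cong C \oplus_{t(C)} t(X_0)$ (using surjectivity of $t(X_0) \to T$ from (b)) exhibits $E_2$ as $\iota_* \xi_2$ along $\iota\colon t(C) \hookrightarrow C$. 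By the standard compatibility of Yoneda products with push- and pullback one has $E_1 \cdot \iota_*\xi_2 = \iota^* E_1 \cdot \xi_2$, and $\iota^* E_1 \in \Ext^{n-1}(t(C), F)$ vanishes by the induction hypothesis.

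I expect the main obstacle to be the bookkeeping in (d), namely the pushout identification $X_0 \cong C \oplus_{t(C)} t(X_0)$ (which is where the exactness (b) of $t$ is crucially used) and the Yoneda-product manipulation; both reduce to routine diagram chases.
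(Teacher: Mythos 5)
Your proof is correct, and it takes a genuinely different route from the paper's in several places.

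You reorder the parts as (c), (a), (b), (d), whereas the paper proves (a), (b), (c), (d) in sequence. Your argument for (c) is where the main difference lies: you use only the closure of $\mathcal{T}$ under quotients (and $\mathcal{F}$ under subobjects), which holds for any torsion pair, together with the \emph{automatic} one-sided exactness of $t$ and $f$ as adjoints, and then invoke the abelian-subcategory hypothesis to identify the kernel computed in $\mathcal{T}$ with the kernel in $\mathcal{B}$. The paper instead first proves (a) directly, then establishes the full exactness of $t$ and $f$ in (b) via a $3\times 3$ diagram and the resulting long exact cohomology sequence, and only then deduces (c) as an easy corollary. Your (b) replaces the $3\times 3$/long-exact-sequence argument by a direct computation of the obstruction kernel $(B' \cap t(B))/t(B')$, which you kill by the maximality of $t(B')$ among $\mathcal{T}$-subobjects. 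Finally, the $n \geq 2$ case of (d) is handled by mirror-image inductions: the paper writes the $n$-extension with the $1$-extension on the ``outside'' (landing in $\Sigma F$) and uses the isomorphism $\Ext^1_{\mathcal{B}}(f(B), F) \cong \Ext^1_{\mathcal{B}}(B, F)$ to replace $B$ by $f(B)$; you write it with the $1$-extension on the ``inside'' (starting from $T$), push it out along $\iota\colon t(C) \hookrightarrow C$, and transfer the obstruction to $\iota^*E_1 \in \Ext^{n-1}_{\mathcal{B}}(t(C), F)$ via the pushout--pullback compatibility of Yoneda products. Both inductions are valid. Your approach is perhaps slightly more ``adjunction-theoretic'' and gets the Serre closure almost for free; the paper's $3\times 3$ argument for (b) is more self-contained and avoids the slightly delicate point about comparing kernels across a non-exact inclusion, which you handle implicitly.
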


\begin{remark}
  The most interesting part of
  Proposition~\ref{p:abelian-torsion-pair} 
  is certainly the $\Ext$-vanishing result in
  part~\ref{enum:Ext-TF}.
  After writing down its proof we learned that Brion has recently
  obtained the same result \cite[Lemma 2.3]{brion2018}.  
  For the convenience of the reader we give a full proof of
  Proposition~\ref{p:abelian-torsion-pair} even though parts
  \ref{enum:Hom-FT}, \ref{enum:adjoints-exact},
  \ref{enum:serre-subcats} are well-known or straightforward.
\end{remark}

\begin{proof}
  \ref{enum:Hom-FT}
  Let $\phi \colon F \ra T$ be a morphism from $F
  \in \mathcal{F}$ to $T \in \mathcal{T}$. 
  By \ref{enum:TP-ses}, we may
  put $\kernel(\phi)$ into a short exact sequence
  $0 \ra T' \ra \kernel(\phi) \ra F' \ra 0$ with $T' \in
  \mathcal{T}$ and $F' \in \mathcal{F}$. Since the composition of
  monomorphisms
  $T' \ra \kernel(\phi) \ra F$ is zero,
  by \ref{enum:TP-Hom-TF=0}, we have $T'=0$ and
  $\kernel(\phi) \in \mathcal{F}$.
  Similarly one shows $\cok(\phi) \in
  \mathcal{T}$. But then $\im(\phi)$ is the cokernel of
  $\kernel(\phi) \ra F$ and the
  kernel of $T \ra \cok(\phi)$ and hence is in $\mathcal{F} \cap
  \mathcal{T}$ since both $\mathcal{F}$ and $\mathcal{T}$ are
  abelian subcategories, i.e., $\im(\phi)=0$ and $\phi=0$.

  \ref{enum:adjoints-exact}
  We use the notation of
  Remark~\ref{rem:adjoints-f-and-t}. Clearly, $t$ and $f$ are
  additive. 
  Let $0 \ra B_1 \ra B_2 \ra B_3 \ra 0$ be a short exact
  sequence in $\mathcal{B}$. Then we obtain a commutative diagram
  with exact rows
  \begin{equation*}
    \xymatrix{
      {0} \ar[r]
      &
      {t(B_3)} \ar[r]
      &
      {B_3} \ar[r]
      &
      {f(B_3)} \ar[r]
      &
      {0}
      \\
      {0} \ar[r]
      &
      {t(B_2)} \ar[r] \ar[u]
      &
      {B_2} \ar[r] \ar[u]
      &
      {f(B_2)} \ar[r] \ar[u]
      &
      {0}
      \\
      {0} \ar[r]
      &
      {t(B_1)} \ar[r] \ar[u]
      &
      {B_1} \ar[r] \ar[u]
      &
      {f(B_1)} \ar[r] \ar[u]
      &
      {0.}
    }
  \end{equation*}
  We view the columns as complexes in $\mathcal{B}$ by adding
  zeroes.
  Then the diagram is a short exact sequence of
  complexes and gives rise to a long exact sequence on cohomology.
  Since the middle column is exact, since the cohomology objects
  of the left 
  (resp.\ right)
  column are in $\mathcal{T}$ (resp.\ $\mathcal{F})$, and since there
  are no nonzero morphisms 
  from $\mathcal{F}$ to $\mathcal{T}$, by \ref{enum:Hom-FT},
  the other two columns are also exact. This shows that $t$ and
  $f$ are exact.

  \ref{enum:serre-subcats}
  Let $F \ra B \ra F'$ be an exact sequence in $\mathcal{B}$ where
  $F$ and $F'$ are objects of 
  $\mathcal{F}$.
  To see that $\mathcal{F}$ is a Serre subcategory of
  $\mathcal{B}$ we need to see that $B \in \mathcal{F}$.
  The functor $t \colon \mathcal{B} \ra
  \mathcal{T}$ is exact, by 
  \ref{enum:adjoints-exact},
  and hence yields
  the exact sequence 
  $t(F) \ra t(B) \ra t(F')$ in $\mathcal{T}$.
  Since $t(F)=0=t(F')$ we obtain $t(B)=0$, i.e., $B \in
  \mathcal{F}$.

  The same argument proves that 
  $\mathcal{T}$ is a Serre subcategory of
  $\mathcal{B}$.

  \ref{enum:Ext-TF}
  The claim is trivially true for $n<0$. For 
  $n=0$ it is true 
  by axiom~\ref{enum:TP-Hom-TF=0}
  of a torsion pair.
  
  For $n=1$ we need to prove that 
  $\Ext^1_\mathcal{B}(T,F)=0$
  for $T \in \mathcal{T}$ and $F \in \mathcal{F}$. 
  Using Yoneda extensions (and
  \cite[\sptag{06XU}]{stacks-project})
  it is enough to show that any short exact sequence
  $0 \ra F \ra B \ra T \ra 0$ in $\mathcal{B}$ splits.
  Since $t$ is exact, by \ref{enum:adjoints-exact}, and $t(F)=0$,
  we obtain a morphism
  of short exact sequences
  \begin{equation*}
    \xymatrix{
      {0} \ar[r] 
      &
      {0} \ar[r] \ar[d]
      &
      {t(B)} \ar[r]^-{\sim} \ar[d]
      &
      {t(T)} \ar[r] \ar[d]^-{\sim}
      &
      {0}
      \\
      {0} \ar[r] 
      &
      {F} \ar[r]
      &
      {B} \ar[r]
      &
      {T} \ar[r]
      &
      {0}
    }
  \end{equation*}
  in $\mathcal{B}$; the vertical arrow on the right is an
  isomorphism since $T \in \mathcal{T}$ and hence $f(T)=0$.
  This shows that our sequence splits. (Since
  $\Hom_\mathcal{B}(T,F)=0$, the splitting is in fact unique.)

  Now let $n \geq 2$.
  Let $f \in \Ext_\mathcal{B}^n(T,F)$, i.e., $f \colon T \ra
  \Sigma^n F$ is a morphism in
  $\D(\mathcal{B})$.
  The $n$-extension $f$ can be written as the composition
  of a 1-extension and an $(n-1)$-extension: this is easy to
  prove directly or follows alternatively using
  Yoneda extensions and \cite[\sptag{06XU}]{stacks-project}.
  Anyhow, there is an object $B \in \mathcal{B}$ 
  such that $f$ is the composition
  \begin{equation*}
    f \colon T \xra{g} \Sigma^{n-1}B \xra{\Sigma^{n-1}h} \Sigma^n F,
  \end{equation*}
  where $g$ and $h \colon B \ra \Sigma F$ are 
  morphisms in $\D(\mathcal{B})$.
  The long exact $\Ext_\mathcal{B}(-,F)$-sequence obtained from
  the short 
  exact sequence $0 \ra t(B) \ra B \xra{\pi} f(B) \ra 0$ yields an
  isomorphism
  \begin{equation*}
    \pi^* \colon 
    \Ext^1_\mathcal{B}(f(B), F) \sira
    \Ext^1_\mathcal{B}(B, F) 
  \end{equation*}
  since we already know that 
  $\Ext^1_\mathcal{B}(t(B), F)$ and
  $\Hom_\mathcal{B}(t(B), F)$ vanish.
  Hence $h$ has the form $h=h' \circ \pi$ for a (unique) morphism
  $h' \colon f(B) \ra \Sigma F$ in $\D(\mathcal{B})$.
  Then
  \begin{equation*}
    f=(\Sigma^{n-1}h) \circ g = (\Sigma^{n-1}h') \circ
    (\Sigma^{n-1}\pi) \circ g. 
  \end{equation*}
  By induction, the $(n-1)$-extension $(\Sigma^{n-1}\pi) \circ g
  \colon T 
  \ra  \Sigma^{n-1}f(B)$ vanishes. Hence $f=0$.
\end{proof}

\subsection{Decompositions of bounded derived categories}
\label{sec:decomp-bound-deriv}

If $\mathcal{S}$ is a weak Serre subcategory of an abelian
category $\mathcal{B}$, the full subcategory of $\D(\mathcal{B})$
of objects with cohomology objects in
$\mathcal{S}$ is denoted
$\D_{\mathcal{S}}(\mathcal{B})$. 
Let 
$\D_{\mathcal{S}}^-(\mathcal{B})$,
$\D_{\mathcal{S}}^+(\mathcal{B})$,
and~$\D_{\mathcal{S}}^\bd(\mathcal{B})$ denote the full subcategories
of $\D_{\mathcal{S}}(\mathcal{B})$
of objects whose cohomology is bounded above, bounded below,
and bounded, respectively. 
All these categories are strictly full triangulated
subcategories of $\D(\mathcal{B})$, 
see, e.g., \cite[\sptag{06UQ}]{stacks-project}.
We refer the reader to \cite[Subsection~5.1]{BS-conservative}
for the notion of a semi-orthogonal decomposition.

\begin{theorem}
  \label{t:sod-DbBA}
  Let $\mathcal{B}$ be a weak Serre subcategory of an abelian
  category $\mathcal{A}$.
  Let 
  $\mathcal{T}$ and
  $\mathcal{F}$ be abelian subcategories of $\mathcal{B}$
  forming a torsion pair $(\mathcal{T}, \mathcal{F})$ in
  $\mathcal{B}$.
  Assume that 
  \begin{equation}
    \label{eq:DA-T-F}
    \Ext^n_{\mathcal{A}}(T, F)=0
  \end{equation}
  for all objects $T \in \mathcal{T}$, $F \in \mathcal{F}$ and
  all integers $n$. 
  Then 
  \begin{equation*}
    \D_{\mathcal{B}}^\bd(\mathcal{A})=
    \langle \D_{\mathcal{F}}^\bd(\mathcal{A}), 
    \D_{\mathcal{T}}^\bd(\mathcal{A})\rangle
  \end{equation*}
  is a semi-orthogonal decomposition.
\end{theorem}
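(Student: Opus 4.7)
The plan is to verify the two defining properties of a semi-orthogonal decomposition. As a preliminary, $\mathcal{T}$ and $\mathcal{F}$ are Serre in $\mathcal{B}$ by Proposition~\ref{p:abelian-torsion-pair}\ref{enum:serre-subcats}, and since $\mathcal{B}$ is weak Serre in $\mathcal{A}$ they inherit the structure of weak Serre subcategories of $\mathcal{A}$; consequently $\D_{\mathcal{T}}^\bd(\mathcal{A})$ and $\D_{\mathcal{F}}^\bd(\mathcal{A})$ are strictly full triangulated subcategories of $\D_{\mathcal{B}}^\bd(\mathcal{A})$.

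For the semi-orthogonality $\Hom_{\D(\mathcal{A})}(T^\bullet, F^\bullet)=0$ when $T^\bullet \in \D_{\mathcal{T}}^\bd(\mathcal{A})$ and $F^\bullet \in \D_{\mathcal{F}}^\bd(\mathcal{A})$, I would use a double induction on the cohomological amplitudes, applying $\Hom_{\D(\mathcal{A})}(-,F^\bullet)$ and $\Hom_{\D(\mathcal{A})}(T^\bullet,-)$ to the standard truncation triangles. This reduces the claim to the case where both complexes are quasi-isomorphic to shifts of single objects of $\mathcal{T}$ and $\mathcal{F}$, in which case the relevant Hom-groups are precisely the $\Ext^n_{\mathcal{A}}$-groups that vanish by hypothesis~\eqref{eq:DA-T-F}.

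For the existence of a decomposition triangle, I would induct on the cohomological amplitude of $B^\bullet \in \D_{\mathcal{B}}^\bd(\mathcal{A})$. The base case, in which $B^\bullet$ is quasi-isomorphic to a shift of a single object $B \in \mathcal{B}$, is provided directly by the torsion pair short exact sequence $0 \to t(B) \to B \to f(B) \to 0$ of Remark~\ref{rem:adjoints-f-and-t}. For the inductive step, let $n$ be the top degree with $H^n(B^\bullet) \neq 0$; the truncation triangle $B_1 \to B^\bullet \to B_2 \to \Sigma B_1$ with $B_1 := \tau_{\leq n-1}B^\bullet$ and $B_2 := \Sigma^{-n}H^n(B^\bullet)$ expresses $B^\bullet$ as an extension of two pieces of strictly smaller amplitude, each admitting a decomposition $T_i \to B_i \to F_i \to \Sigma T_i$ by the inductive hypothesis.

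The main obstacle is splicing these two decompositions, which I would handle via three applications of the octahedral axiom. The first, applied to the composition $T_1 \to B_1 \to B^\bullet$, produces a triangle $T_1 \to B^\bullet \to C_1 \to \Sigma T_1$ together with a secondary triangle $F_1 \to C_1 \to B_2 \to \Sigma F_1$. The semi-orthogonality established above forces the composition $T_2 \to B_2 \to \Sigma F_1$ to vanish, so $T_2 \to B_2$ lifts to a morphism $T_2 \to C_1$; its cone $C_2$ fits, by a second octahedron, in a triangle $F_1 \to C_2 \to F_2 \to \Sigma F_1$, and hence lies in $\D_{\mathcal{F}}^\bd(\mathcal{A})$. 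A third octahedron, applied to $B^\bullet \to C_1 \to C_2$, then produces the sought triangle $T \to B^\bullet \to C_2 \to \Sigma T$, in which $T$ is an extension of $T_2$ by $T_1$ and therefore lies in $\D_{\mathcal{T}}^\bd(\mathcal{A})$.
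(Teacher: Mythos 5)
Your proof is correct. The paper's own proof takes the same two-step strategy — semi-orthogonality via truncation and the $\Ext$-vanishing hypothesis, then completeness — but for the second step it simply observes that $\D_{\mathcal{B}}^\bd(\mathcal{A})$ is generated as a triangulated category by $\D_{\mathcal{T}}^\bd(\mathcal{A})$ and $\D_{\mathcal{F}}^\bd(\mathcal{A})$ (via truncation and axiom~\ref{enum:TP-ses}), and then implicitly invokes the standard fact that generation together with semi-orthogonality already forces the existence of decomposition triangles. Your induction on amplitude with the three octahedra is precisely the proof of that standard fact, so you have effectively unrolled the lemma the paper leaves implicit; both arguments are sound, and your version is self-contained at the cost of being longer.
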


\begin{remark}
Condition~\eqref{eq:DA-T-F} is automatically satisfied
in the special case when $\mathcal{B}=\mathcal{A}$,
by Proposition~\ref{p:abelian-torsion-pair}.\ref{enum:Ext-TF}.
In general, the condition is automatically satisfied for $n \leq 1$
by the same proposition, since $\mathcal{B}$ is closed under extensions.
\end{remark}

\begin{proof}
  By Proposition~\ref{p:abelian-torsion-pair}.\ref{enum:serre-subcats},
  $\mathcal{F}$ and $\mathcal{T}$ are Serre
  subcategories of $\mathcal{B}$, hence weak Serre
  subcategories of $\mathcal{A}$, and 
  $\D_{\mathcal{F}}^\bd(\mathcal{A})$ and  
  $\D_{\mathcal{T}}^\bd(\mathcal{A})$ are strictly full triangulated
  subcategories of $\D_\mathcal{B}^\bd(\mathcal{A})$.

  Repeated use of truncation shows that 
  $\D_{\mathcal{B}}^\bd(\mathcal{A})$ coincides with its smallest strictly
  full triangulated subcategory containing $\mathcal{B}$.
  Axiom~\ref{enum:TP-ses} shows that it is
  also its smallest such subcategory containing both
  $\mathcal{F}$ and $\mathcal{T}$.
  In particular, $\D_{\mathcal{B}}^\bd(\mathcal{A})$ is generated,
  as a triangulated category, by its subcategories
  $\D_{\mathcal{F}}^\bd(\mathcal{A})$ 
  and $\D_{\mathcal{T}}^\bd(\mathcal{A})$.

  By truncation and condition \eqref{eq:DA-T-F},
  there are no nonzero morphisms from 
  $\D_{\mathcal{T}}^\bd(\mathcal{A})$ to 
  $\D_{\mathcal{F}}^\bd(\mathcal{A})$.
\end{proof}
 
\subsection{Some notes on homotopy limits of truncations}
\label{sec:notes-htpy-limits}

Our aim is to state and prove Theorem~\ref{t:sod-DBA} 
in the following subsection~\ref{sec:decomp-unbo-deriv}
which is
the analog  
of Theorem~\ref{t:sod-DbBA} for unbounded
derived categories. In order to to this, 
we need some terminology and some basic observations.

We refer the reader to
\cite[Definition~1.6.4]{neeman-tricat} for the notion of a
homotopy colimit. The definition of a homotopy limit is dual.

Let $\mathcal{A}$ be an abelian category.
We propose the following terminology. 

\begin{definition}
\label{d:truncation-system}
\rule{0mm}{1mm}
\begin{enumerate}
\item
An inverse system 
\begin{equation*}
  (F_n)_{n \in \mathbb{N}} = 
  (\dots \ra F_{n+1} \ra F_n \ra \dots \ra F_2 \ra F_1 \ra F_0)
\end{equation*}
in $\D(\mathcal{A})$
is an \define{inverse truncation system} if
for each $n \in \mathbb{N}$
the object $F_n$ is in
$\D^{\geq -n}(\mathcal{A})$
and the map
$\tau_{\geq -n}F_{n+1} \to F_n$ induced by the transition
morphism is an isomorphism
(cf.\ Remark~\ref{rem:ITS-alternative} for a reformulation).
\item
If  $\D(\mathcal{A})$ has countable products, 
an inverse truncation system 
$(F_n)_{n \in \mathbb{N}}$ in $\D(\mathcal{A})$
is \define{effective} if
the induced maps $\HH^p(\holim_n F_n) \to
\lim_n\HH^p(F_n)$,
are isomorphisms for all $p \in \bZ$.
Our assumption ensures the existence of homotopy limits,
and all limits $\lim_n\HH^p(F_n)$
exist by 
Remark~\ref{rem:ITS-alternative} below.  
\end{enumerate}
Dually, we define a \emph{direct truncation system} in the
obvious way. 
If $\D(\mathcal{A})$ has countable coproducts we can 
talk about 
\emph{effectiveness} of direct truncation systems.
\end{definition}

\begin{example}
  \label{ex:ITS}
  Given any object $F$ in $\D(\mathcal{A})$, we get an inverse
  truncation system $(\tau_{\geq-n}F)_{n \in \mathbb{N}}$ whose
  transition morphisms are the natural maps between the truncations.
  Similarly, we get a direct truncation system 
  $(\tau_{\leq n}F)_{n \in \mathbb{N}}$.
\end{example}

\begin{definition}
  \label{d:standardITS}
  An inverse (resp.\ direct) truncation system is called
  \define{standard} if it is 
  isomorphic to an inverse (resp.\ direct) truncation system as
  in Example~\ref{ex:ITS}. 
\end{definition}

\begin{remark}
  Informally, one may think of an arbitrary
  inverse truncation system as obtained from a
  possibly non-existing object by truncation. 
  Effectiveness says that the natural candidate for such an
  object, the homotopy limit,
  indeed does the job. We explain this in
  Remark~\ref{rem:effective} below.
\end{remark}

\begin{remark}
  \label{rem:ITS-alternative}
  An inverse system $(F_n)_{n \in \mathbb{N}}$ in 
  $\D(\mathcal{A})$ is an inverse truncation system if and only
  if $\HH^p(F_n) = 0$ for all $p < -n$ and the maps
  $\HH^p(F_{n+1}) \ra \HH^p(F_n)$ induced by the
  transition maps are 
  isomorphisms for all $p \geq -n$.
  This just means that the induced inverse systems 
  $(\HH^p(F_{n}))_{n \in \mathbb{N}}$ look as follows.  
  \begin{equation*}
    \xymatrix@R0em{
      {\ldots} \ar[r]^-{\sim} 
      & {\HH^1(F_2)} \ar[r]^{\sim}
      & {\HH^1(F_1)} \ar[r]^{\sim}
      & {\HH^1(F_0)}
      \\
      {\ldots} \ar[r]^-{\sim} 
      & {\HH^0(F_2)} \ar[r]^{\sim}
      & {\HH^0(F_1)} \ar[r]^{\sim}
      & {\HH^0(F_0)}
      \\
      {\ldots} \ar[r]^-{\sim} 
      & {\HH^{-1}(F_2)} \ar[r]^{\sim}
      & {\HH^{-1}(F_1)} \ar[r]
      & {0}
      \\
      {\ldots} \ar[r]^-{\sim} 
      & {\HH^{-2}(F_2)} \ar[r]
      & {0} \ar[r]
      & {0}
     }
  \end{equation*}
  In particular, 
  then all limits 
  $\lim_n\HH^p(F_n)$ exist in $\mathcal{A}$.
\end{remark}

\begin{remark}
  \label{rem:effective}
  Assume that $\D(\mathcal{A})$ has countable products.
  Let $G:=\holim_n F_n$ be a homotopy limit of 
  an inverse truncation system $(F_n)_{n \in \bN}$.
  It comes together with morphisms
  $G \ra F_n$, for all $n \in \bZ$.
  Since $F_n \in \D^{\geq -n}(\mathcal{A})$ they come from unique
  morphisms  
  $\tau_{\geq -n}G \ra F_n$.
  Then, as a consequence
  of 
  Remark~\ref{rem:ITS-alternative},
  the following conditions are equivalent.
  \begin{enumerate}
  \item The inverse truncation system $(F_n)_{n \in \bN}$ is
    effective.
  \item 
    For all $p \in \bZ$ there is an integer $n \geq -p$ such that
    $\HH^p(G) \to
    \HH^p(F_n)$ is an isomorphism; this condition is then
    automatically 
    true for all 
    integers $n \geq -p$.
  \item The morphism
    $(\tau_{\geq -n} G)_{n \in \bN} \ra (F_n)_{n \in
      \bN}$ of inverse truncation systems is an isomorphism.
  \end{enumerate}
  In particular, any effective inverse truncation system is
  standard in the sense of Definition~\ref{d:standardITS}; more
  precisely, up to canonical isomorphism
  it is obtained by truncation from its homotopy limit.
\end{remark}

We address the natural question whether an object is the
homotopy limit of its truncations.  

\begin{lemma}[Objects as homotopy limits of their truncations]
  \label{l:holim-truncations}
  Let $F$ be an object of $\D(\mathcal{A})$ and assume that
  $\D(\mathcal{A})$ has countable products. 
  If the inverse truncation system $(\tau_{\geq -n}F)_{n \in \bN}$ is
  effective there is a triangle  
  \begin{equation*}
    F \ra \prod_{n \in \bN} \tau_{\geq -n} F \ra \prod_{n \in
      \bN} \tau_{\geq -n} F \ra \Sigma F
  \end{equation*}
  in $\D(\mathcal{A})$ which exhibits $F$ as a
  homotopy limit of its truncations and whose first morphism
  is induced by the truncation maps $F \ra \tau_{\geq -n}F$.
\end{lemma}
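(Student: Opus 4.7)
The plan is to identify $F$ with $G := \holim_{n} \tau_{\geq -n}F$ and read off the desired triangle from Neeman's definition of homotopy limit. By \cite[Definition~1.6.4]{neeman-tricat} applied to the dual (inverse) setting, there is a distinguished triangle
\begin{equation*}
  G \xra{\alpha} \prod_{n \in \bN} \tau_{\geq -n}F \xra{1-\sigma} \prod_{n \in \bN} \tau_{\geq -n}F \ra \Sigma G
\end{equation*}
in $\D(\mathcal{A})$, where $\sigma$ is the endomorphism of the product induced by the transition morphisms $\tau_{\geq -(n+1)}F \ra \tau_{\geq -n}F$. It therefore suffices to produce an isomorphism $\phi \colon F \sira G$ whose composition with $\alpha$ is the morphism $\iota\colon F \ra \prod_n \tau_{\geq -n}F$ assembled from the canonical truncation maps.

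First I would construct $\phi$. Since the truncation maps $F \ra \tau_{\geq -n}F$ are compatible with the transition morphisms, we have $(1-\sigma)\circ \iota = 0$. Applying $\Hom(F,-)$ to the Milnor triangle above gives a long exact sequence, and exactness at the middle term provides a lift $\phi\colon F \ra G$ with $\alpha \circ \phi = \iota$. This lift is generally not unique (it is determined only up to $\Hom(F, \Sigma^{-1}\prod_n \tau_{\geq -n}F)$), but any choice will suffice below.

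Second, I would show that $\phi$ is an isomorphism by verifying that $\HH^p(\phi)$ is an isomorphism for every $p \in \bZ$. Fix $p$ and choose any $n \geq -p$. Let $\alpha_n\colon G \ra \tau_{\geq -n}F$ denote the $n$-th component of $\alpha$. The composition $\alpha_n \circ \phi$ equals the truncation map $F \ra \tau_{\geq -n}F$ by construction of $\phi$, and this induces an isomorphism on $\HH^p$ because $n \geq -p$. On the other hand, effectiveness of the inverse truncation system $(\tau_{\geq -n}F)_{n \in \bN}$, in the equivalent formulation given in Remark~\ref{rem:effective}, asserts precisely that $\HH^p(\alpha_n)$ is an isomorphism for $n \geq -p$. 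Two-out-of-three then forces $\HH^p(\phi)$ to be an isomorphism.

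Finally, I would transport the Milnor triangle along $\phi^{-1}$ to obtain a triangle of the form
\begin{equation*}
  F \ra \prod_{n \in \bN} \tau_{\geq -n}F \ra \prod_{n \in \bN} \tau_{\geq -n}F \ra \Sigma F,
\end{equation*}
whose first morphism is $\alpha \circ \phi = \iota$, as required. The only subtlety I anticipate is bookkeeping around the non-uniqueness of the lift $\phi$, but this is harmless because the cohomological argument above applies to any choice, and the resulting triangle is unique up to (non-canonical) isomorphism, which is all that is claimed.
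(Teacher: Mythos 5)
Your argument follows the same structure as the paper's proof: both lift the canonical map $\iota\colon F \to \prod_n \tau_{\geq -n}F$ through the Milnor triangle to a (possibly non-unique) morphism $F \to G := \holim_n \tau_{\geq -n}F$, and both invoke the effectiveness criterion of Remark~\ref{rem:effective} to conclude that this lift is an isomorphism. The only cosmetic difference is that you verify the isomorphism by checking cohomology objects $\HH^p$ directly via two-out-of-three, while the paper checks the equivalent condition that $\tau_{\geq -n}(\gamma)$ is an isomorphism for each $n$; both arguments are correct.
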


\begin{proof}
  Let $G$ be a homotopy limit 
  of the given inverse system
  with defining triangle
  \begin{equation*}
    G \ra \prod \tau_{\geq -n}F \ra \prod \tau_{\geq -n}F \ra
    \Sigma G.
  \end{equation*}
  The obvious map from $F$ to the second term of this triangle
  comes from a 
  (possibly non-unique) morphism $\gamma \colon F \ra G$ to its
  first term which makes the upper left triangle in the
  following diagram commutative.
  Since the map $G \ra \tau_{\geq -n}F$ factors through the
  truncation map $G \ra \tau_{\geq -n}G$, we obtain the
  commutative diagram
  \begin{equation*}
    \xymatrix{
      {F} \ar[r] \ar[d]^-{\gamma} & {\tau_{\geq -n}F}\\
      {G} \ar[r] \ar[ur] & {\tau_{\geq -n} G.} \ar[u]
    }
  \end{equation*}
  Its right vertical arrow is an isomorphism
  since the inverse truncation system $(\tau_{\geq -n}F)_{n \in
    \bN}$ is effective, by the equivalent statements in 
  Remark~\ref{rem:effective}.
  Hence the inverse of this arrow is the morphism 
  $\tau_{\geq -n}(\gamma)$ which is therefore an
  isomorphism. Since $n \in \bN$ was arbitrary, this implies that
  $\gamma$ is an isomorphism. The claim follows.
\end{proof}

We now provide instances of inverse and direct truncation
systems that are effective. 
The following proposition is essentially Remark~2.3 in
\cite{neeman-homotopy-limits}.

\begin{proposition}
\label{p:ab4-trunc}
  Let $\mathcal{A}$ be an abelian category. If $\mathcal{A}$ has
  countable products which are exact
  then any inverse truncation system in $\D(\mathcal{A})$ is
  effective.
  Dually, if $\mathcal{A}$ has exact countable coproducts
  then any direct truncation system in $\D(\mathcal{A})$ is
  effective.
\end{proposition}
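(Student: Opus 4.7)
The approach is to extract effectiveness from the defining triangle of the homotopy limit combined with the exactness hypothesis on countable products. Concretely, the homotopy limit of $(F_n)_{n \in \bN}$ fits into a triangle
\begin{equation*}
\holim_n F_n \to \prod_n F_n \xra{1-\sigma} \prod_n F_n \to \Sigma \holim_n F_n,
\end{equation*}
where $\sigma$ is assembled from the transition morphisms of the inverse system. Since countable products in $\mathcal{A}$ are assumed exact, they commute with taking cohomology, so applying $\HH^p$ to the triangle gives a long exact sequence in which $\HH^p(\prod_n F_n)$ is identified with $\prod_n \HH^p(F_n)$ and the middle map is $1 - \HH^p(\sigma)$. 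Breaking this long exact sequence into pieces yields a short exact sequence
\begin{equation*}
0 \to \cok\bigl(1 - \HH^{p-1}(\sigma)\bigr) \to \HH^p(\holim_n F_n) \to \ker\bigl(1 - \HH^p(\sigma)\bigr) \to 0
\end{equation*}
in $\mathcal{A}$.

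By the universal property of the product, the kernel of $1 - \HH^p(\sigma)$ on $\prod_n \HH^p(F_n)$ is canonically the limit $\lim_n \HH^p(F_n)$, and by naturality of the long exact sequence the resulting morphism $\HH^p(\holim_n F_n) \to \lim_n \HH^p(F_n)$ is the canonical comparison map appearing in Definition~\ref{d:truncation-system}. It therefore suffices to show that $\cok(1 - \HH^{p-1}(\sigma))$ vanishes, i.e., that $1 - \HH^{p-1}(\sigma)$ is an epimorphism.

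For this final step I would exploit the explicit shape of an inverse truncation system given in Remark~\ref{rem:ITS-alternative}: the inverse system $(\HH^{p-1}(F_n))_{n \in \bN}$ has zero terms for $n < 1-p$ and transition morphisms that are isomorphisms for $n \geq 1-p$. Replacing these isomorphisms by identities, the system takes the shape $\ldots \to A \to A \to A \to 0 \to \ldots \to 0$, and the map $1 - \sigma$ on $\prod_{n \geq N} A$ admits an explicit right inverse $s$ given by $\pi_{N+k} \circ s := -\sum_{j=0}^{k-1} \pi_{N+j}$. This is a \emph{finite} categorical sum of product projections; a quick check using $\pi_{N+k} \circ (1-\sigma) = \pi_{N+k} - \pi_{N+k+1}$ confirms that $(1-\sigma) \circ s = \id$. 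The dual statement for direct truncation systems then follows by passing to the opposite category. The main (mild) obstacle is merely the bookkeeping required to replace element-level arguments with morphism-level ones in a general abelian category; this is where both the universal properties of (co)products and the assumed exactness are used essentially.
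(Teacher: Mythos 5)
Your proof is correct and follows essentially the same route as the paper: apply $\HH^p$ to the defining triangle of $\holim$, use exactness of countable products to commute cohomology and products, and observe that $1-\sigma$ is a split epimorphism because the inverse system $(\HH^p(F_n))_n$ is eventually constant. The only difference is cosmetic: the paper delegates the construction of the splitting to Remark~\ref{rem:1-shift-split-epi} (left to the reader), whereas you write it out explicitly, and you phrase the conclusion via a short exact sequence and vanishing cokernel rather than via vanishing connecting maps.
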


\begin{proof}
  Since the statements are dual,
  it is enough to prove one of them.
  We prove the first statement.

  It is clear that $\D(\mathcal{A})$ has all countable products.
  Let $(F_n)_{n \in \mathbb{N}}$ be an inverse
  truncation system in $\D(\mathcal{A})$.
  Consider a homotopy limit $\holim_n F_n$ together with a
  defining triangle  
  \begin{equation*}
    \holim_n F_n \to \prod_n F_n \xrightarrow{1 - \sigma}
    \prod_n F_n \to \Sigma \holim_n F_n,
  \end{equation*}
  where $\sigma$ is the shift map.
  Since $\mathcal{A}$ has exact countable products,
  the functor $\HH^p(-)$ preserves countable products for all $p$.
  Hence, we get an exact sequence
  \begin{equation*}
    \dots
    \xrightarrow{\delta} 
    \HH^p(\holim_n F_n) \to 
    \prod_n \HH^p(F_n) \xrightarrow{1 - \sigma} 
    \prod_n \HH^p(F_n) \xrightarrow{\delta} 
    \HH^{p + 1}(\holim_n F_n)
    \ra \dots
  \end{equation*}
  in $\mathcal{A}$ where $\sigma$ is again the shift  map.
  Note that the inverse system $(\HH^p(F_n))_{n \in \mathbb{N}}$
  is zero for $n < -p$ and constant for $n \geq -p$ (cf.\
  Remark~\ref{rem:ITS-alternative}). 
  Hence Remark~\ref{rem:1-shift-split-epi} below shows that 
  all morphisms $1- \sigma$ in our exact sequence are
  epimorphisms, i.e., all connecting 
  morphisms $\delta$ 
  vanish. Hence the canonical morphism
  $$
  \HH^p(\holim_n F_n) \to \lim_n \HH^p(F_n)
  $$
  is an isomorphism, i.e., our inverse truncation system is effective.
\end{proof}

\begin{remark}
  \label{rem:1-shift-split-epi}
  If an inverse system $(G_n)_{n \in \bN}$ in an additive
  category with countable
  products is \emph{eventually constant} in the sense that
  there is an $N \in \bN$ 
  such that all
  transition morphisms $G_{n+1} \ra G_n$ for $n \geq N$ are
  isomorphisms, then the morphism ``identity minus shift''
  \begin{equation*}
    1-\sigma \colon \prod_{n \in \bN} G_n \ra \prod_{n \in \bN} G_n
  \end{equation*}
  is a split epimorphism: there is a morphism $f$ in the other
  direction such that $(1-\sigma) \circ f = \id$. We leave the
  easy proof of this fact to the reader.
\end{remark}

\begin{example}
  \label{ex:modules-effective}
  If $R$ is an arbitrary ring, then all 
  inverse and direct truncation systems in the derived category
  $\D(\Mod(R))$ of $R$-modules are effective, by 
  Proposition~\ref{p:ab4-trunc}.
\end{example}

\begin{example}
  \label{ex:ringed-topos-effective}
  If $(X, \mathcal{O})$ is a ringed topos,
  then the category of $\mathcal{O}$-modules is Grothendieck
  abelian. 
  In particular, it has arbitrary small products and coproducts and
  all coproducts are exact;
  in particular, any direct truncation system in $\D(X)$ is
  effective, by Proposition~\ref{p:ab4-trunc}. 
  Products, however, need not be exact.
  Nevertheless, certain inverse truncation systems may still be
  effective, see Proposition~\ref{p:Dqc-truncation-complete} below.
\end{example}


The following proposition follows from the proof of
\cite[\sptag{0D6M}]{stacks-project},
which basically is an abstract version of a result by Bökstedt--Neeman~\cite[Lemma~5.3]{neeman-homotopy-limits}.

\begin{proposition}
  \label{p:Dqc-truncation-complete}
  Let $X$ be an algebraic stack.
  Then any inverse truncation system in $\D(X)$ with terms in
  $\D_\qc(X)$ is effective. 
\end{proposition}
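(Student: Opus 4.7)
The plan is to mimic the proof of Proposition~\ref{p:ab4-trunc} by analyzing the long exact cohomology sequence associated to the defining triangle
\[
\holim_n F_n \to \prod_n F_n \xrightarrow{1-\sigma} \prod_n F_n \to \Sigma \holim_n F_n.
\]
Once we know that $\HH^p$ commutes with the product $\prod_n F_n$ at each degree $p$, the conclusion follows as in Proposition~\ref{p:ab4-trunc}: the induced system $(\HH^p(F_n))_{n \in \bN}$ is eventually constant by Remark~\ref{rem:ITS-alternative}, so the map $1 - \sigma$ on $\prod_n \HH^p(F_n)$ is a split epimorphism by Remark~\ref{rem:1-shift-split-epi}. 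Consequently the connecting homomorphisms in the long exact sequence vanish and $\HH^p(\holim_n F_n) \sira \lim_n \HH^p(F_n)$ for all $p$, which is exactly effectiveness.

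The main obstacle, flagged in Example~\ref{ex:ringed-topos-effective}, is that countable products in $\Mod(X_\liset, \mathcal{O}_X)$ are not exact, so $\HH^p$ does not commute with arbitrary products in $\D(X)$ and one cannot simply invoke Proposition~\ref{p:ab4-trunc}. To get around this I would exploit the specific structure of an inverse truncation system. Fix $p \in \bZ$ and choose $N \geq -p$. By Remark~\ref{rem:ITS-alternative}, the transition maps $F_{n+1} \to F_n$ induce isomorphisms on $\HH^q$ for all $n \geq N$ and all $q \geq -N$; in particular the $F_n$ with $n \geq N$ agree with $F_N$ in all cohomological degrees $\geq -N$ and differ only in strictly lower ones. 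This reduces the computation of $\HH^p$ of the tail product $\prod_{n \geq N} F_n$ to a situation where all factors are uniformly bounded below.

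For this bounded-below tail, one follows the strategy of the cited \sptag{0D6M}, which axiomatizes Bökstedt--Neeman~\cite[Lemma~5.3]{neeman-homotopy-limits}: although countable products of sheaves are not exact in general, for a uniformly bounded-below family of objects in $\D(X_\liset, \mathcal{O}_X)$ one can pass to K-injective resolutions and then truncate each resolution above degree $p+1$ without affecting $\HH^p$ of any term or of the product. In this truncated bounded range, products of K-injectives are K-injective with the termwise product as their underlying complex, so $\HH^p$ does commute with the product. Combining this with the finite factor $\prod_{n < N} F_n$, where $\HH^p$ trivially commutes with the product, yields the required isomorphism $\HH^p(\prod_n F_n) \cong \prod_n \HH^p(F_n)$, and the conclusion from the first paragraph applies.
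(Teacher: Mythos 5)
Your proof has a genuine gap, and it is visible from the start: the hypothesis that the cohomology sheaves lie in $\Qcoh(X)$ is never used, yet the proposition is false without it (this is exactly the content of Example~\ref{ex:ringed-topos-effective}: products in $\Mod(X_\liset,\mathcal{O}_X)$ are not exact, so a general inverse truncation system in $\D(X)$ need not be effective). Concretely, two steps fail. First, the ``reduction to a uniformly bounded-below family'' is not justified: for $n\geq N$ the objects $F_n$ are \emph{not} uniformly bounded below; $F_n$ has cohomology down to degree $-n$. Splitting $F_n$ into $\tau_{\geq -N}F_n$ and $\tau_{\leq -N-1}F_n$ does not help, because while $\tau_{\leq -N-1}$ commutes with products, the class $\D^{\leq -N-1}$ is \emph{not} closed under products (only $\D^{\geq k}$ is), so $\prod_n \tau_{\leq -N-1}F_n$ may have cohomology in degree $p$. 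Second, even for a genuinely uniformly bounded family, the K-injective argument does not deliver $\HH^p(\prod_n I_n)\cong\prod_n \HH^p(I_n)$. The termwise product $\prod_n I_n$ does represent the derived product, but computing $\HH^p$ of a complex forms a cokernel, and a non-exact product does not commute with cokernels; so this identity is exactly the exactness of countable products you are trying to avoid assuming.

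The paper's proof sidesteps both issues by never computing $\HH^p$ of the product in $\D(X)$ directly. Instead it applies, for each affine scheme $U$ smooth over $X$, the functor $\HH^p(U,-)=\HH^p(\Rd\Gamma(U,(-)|_U))$. This \emph{does} commute with products, because restriction and $\Rd\Gamma(U,-)$ are right adjoints and countable products of abelian groups are exact. Quasi-coherence then enters to show $\HH^p(U,F_{n+1})\ra\HH^p(U,F_n)$ is an isomorphism for $n\geq -p$ (the cones are shifts of quasi-coherent sheaves, which have no higher cohomology on an affine $U$), making $(\HH^p(U,F_n))_n$ eventually constant, so Remark~\ref{rem:1-shift-split-epi} applies. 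Finally $\HH^p(G)$ is recovered as the sheafification of $U\mapsto\HH^p(U,G)$. Your top-level strategy (analyze the long exact sequence from the defining triangle of the homotopy limit and invoke Remark~\ref{rem:1-shift-split-epi}) agrees with the paper's, but the crucial commutation with products and the place where quasi-coherence is used are both missing; you would need to replace the K-injective/truncation heuristic with the evaluation-on-affines argument.
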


\begin{proof}
  It is clear that $\D(X)$ has all products.
  Let $(F_n)_{n \in \mathbb{N}}$
  be an inverse truncation system 
  in $\D(X)$ whose terms $F_n$ lie in $\D_\qc(X)$.

  By the definition of an inverse truncation system, the 
  transition morphism
  $F_{n+1} \ra F_n$ factors as 
  $F_{n+1} \ra \tau_{\geq -n}F_{n+1} \sira F_n$, 
  and we obtain a triangle
  \begin{equation*}
    \Sigma^{n + 1}\HH^{-n-1}(F_{n+1})
    \to F_{n+1}
    \to F_n
    \to \Sigma^{n + 2}\HH^{-n-1}(F_{n+1})
  \end{equation*}
  for each $n \in \mathbb{N}$.
  Fix $p \in \bZ$. 
  Since the cohomology sheaves are assumed to be quasi-coherent,
  the functor $\HH^p(U, -)$ vanishes on the first and fourth term of
  this triangle 
  for 
  all $n \geq -p$ and all affine schemes $U$ which are smooth
  over $X$. 
  Hence $\HH^p(U, F_{n+1}) \ra \HH^p(U, F_n)$ is an isomorphism
  for all $n \geq -p$, i.e., 
  the inverse system 
  $(\HH^p(U, F_n))_{n \in \bN}$
  of abelian groups
  is eventually constant.
  By Remark~\ref{rem:1-shift-split-epi}, 
  the morphism ``identity minus shift''
  \begin{equation*}
    1-\sigma \colon \prod_n\HH^p(U,F_n)
    \ra \prod_n \HH^p(U,F_n)
  \end{equation*}
  is a (split) epimorphism. Its kernel is $\lim_n \HH^p(U,F_n)$.

  Note that the functors $\HH^q(U, -) = \HH^q(\Rd \Gamma(U, -|_U))$
  commute with 
  products since the functors restriction to $U$ and $\Rd
  \Gamma(U, -)$ are right 
  adjoints and products 
  are exact in the category of abelian groups.
  Hence applying $\HH^p(U, -)$
  to the defining triangle of the homotopy limit $\holim_n F_n$ 
  yields an exact sequence
  \begin{equation*}
    \dots
    \ra
    \HH^p(U, \holim_n F_n) \to 
    \prod_n \HH^p(U, F_n) \xrightarrow{1 - \sigma} 
    \prod_n \HH^p(U, F_n)
    \ra
    \dots
  \end{equation*}
  of abelian groups. Since all morphisms $1-\sigma$ are
  surjective, as observed above, this sequence splits into 
  short exact sequences, and
  we get isomorphisms
  \begin{equation*}
    \HH^p(U,\holim_n F_n) 
    \xrightarrow{\sim}
    \lim_n \HH^p(U,F_n)
    \xrightarrow{\sim}
    \HH^p(U,F_m)
  \end{equation*}
  for all $m \geq -p$.
  For an arbitrary object $G$ in $\D(X)$,
  the sheaf $\HH^p(G)$ is 
  the sheaf associated to the presheaf
  $U \mapsto \HH^p(U, G)$.
  Hence the canonical map $\HH^p(\holim_n F_n) \to
  \HH^p(F_m)$ 
  is an isomorphism for all $m \geq -p$.
  This means that our system is effective, 
  by Remark~\ref{rem:effective}.
\end{proof}

\subsection{Decompositions of unbounded derived categories}
\label{sec:decomp-unbo-deriv}

Now we can state and prove Theorem~\ref{t:sod-DBA}.

\begin{remark}
\label{rem:cohomology-split}
Let $\mathcal{B}$ be a weak Serre subcategory of an abelian
category $\mathcal{A}$
and let $\mathcal{T}$ and $\mathcal{F}$ be abelian subcategories of
$\mathcal{B}$ forming a torsion pair $(\mathcal{T}, \mathcal{F})$ in
$\mathcal{B}$.
Consider an arbitrary triangle
\begin{equation*}
  B' \to B \to B'' \to \Sigma B'
\end{equation*}
in $\D_\mathcal{B}(\mathcal{A})$.
Then we have $B'$ in $\D_\mathcal{T}(\mathcal{A})$
and $B''$ in $\D_\mathcal{F}(\mathcal{A})$
if and only if the long exact cohomology sequence splits up
into short exact sequences
\begin{equation*}
  0 \to \HH^p(B')
  \to \HH^p(B)
  \to \HH^p(B'')
  \to 0
\end{equation*}
with first term $\HH^p(B')$ in $\mathcal{T}$ and third term
$\HH^p(B'')$ in
$\mathcal{F}$, for each $p \in \bZ$. The ``if''-part is trivial,
and the ``only if''-part
is a simple consequence of the fact that $\Hom_\mathcal{B}(F, T)$
vanishes for each object $F$ in $\mathcal{F}$ and $T$
in $\mathcal{T}$, by 
Proposition~\ref{p:abelian-torsion-pair}.\ref{enum:Hom-FT}.
\end{remark}

\begin{theorem}
  \label{t:sod-DBA}
  Let $\mathcal{B}$ be a weak Serre subcategory of an abelian
  category $\mathcal{A}$.
  Let $(\mathcal{T}, \mathcal{F})$ be a torsion pair in
  $\mathcal{B}$ where both $\mathcal{T}$ and
  $\mathcal{F}$ are abelian subcategories of $\mathcal{B}$.
  Assume that 
  \begin{equation*}
    \Ext_\mathcal{A}^n(T, F)=0
  \end{equation*}
  for all objects $T \in \mathcal{T}$, $F \in \mathcal{F}$ and
  all integers $n \in \bZ$. 
  Assume that $\D(\mathcal{A})$ has countable products and
  coproducts. 
  If every inverse and every direct truncation
  system in 
  $\D(\mathcal{A})$ with terms in $\D_\mathcal{B}(\mathcal{A})$
  is effective, 
  in the sense of Definition~\ref{d:truncation-system},
  then 
  \begin{equation*}
    \D_{\mathcal{B}}(\mathcal{A})=
    \langle \D_{\mathcal{F}}(\mathcal{A}), 
    \D_{\mathcal{T}}(\mathcal{A})\rangle
  \end{equation*}
  is a semi-orthogonal decomposition.
\end{theorem}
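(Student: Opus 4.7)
The plan is to verify the two defining properties of a semi-orthogonal decomposition: (i) the semi-orthogonality condition $\Hom_{\D(\mathcal{A})}(T, F) = 0$ for every $T \in \D_\mathcal{T}(\mathcal{A})$ and $F \in \D_\mathcal{F}(\mathcal{A})$, and (ii) the existence, for every $B \in \D_\mathcal{B}(\mathcal{A})$, of a decomposition triangle $t \to B \to f \to \Sigma t$ with $t \in \D_\mathcal{T}(\mathcal{A})$ and $f \in \D_\mathcal{F}(\mathcal{A})$ (cf.\ Remark~\ref{rem:cohomology-split}). In both cases I would reduce to the bounded version already proved in Theorem~\ref{t:sod-DbBA}, using the effectiveness hypothesis to pass between the bounded and the unbounded setting via homotopy (co)limits of truncation systems.

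For (i), let $T \in \D_\mathcal{T}(\mathcal{A})$ and $F \in \D_\mathcal{F}(\mathcal{A})$. The direct truncation system $(\tau_{\leq n}T)_{n \in \bN}$ and the inverse truncation system $(\tau_{\geq -m}F)_{m \in \bN}$ both have terms in $\D_\mathcal{B}(\mathcal{A})$, and hence are effective by assumption. By Lemma~\ref{l:holim-truncations} and its dual, this exhibits $T$ as $\hocolim_n \tau_{\leq n}T$ and $F$ as $\holim_m \tau_{\geq -m}F$. Two successive applications of the Milnor $\lim^1$-sequence---once for $\Hom$ out of a homotopy colimit, once for $\Hom$ into a homotopy limit---reduce the vanishing of $\Hom_{\D(\mathcal{A})}(T, F)$ to the vanishing of
\begin{equation*}
  \Hom_{\D(\mathcal{A})}(\tau_{\leq n}T,\ \Sigma^j \tau_{\geq -m}F)
  \quad \text{for all $n, m \in \bN$ and all $j \in \bZ$.}
\end{equation*}
Each such Hom, by adjointness of the truncation functors, coincides with a Hom between bounded truncations, lying in $\D_\mathcal{T}^\bd(\mathcal{A})$ and $\D_\mathcal{F}^\bd(\mathcal{A})$ respectively (using the Serre property of $\mathcal{T}$ and $\mathcal{F}$, see Proposition~\ref{p:abelian-torsion-pair}.\ref{enum:serre-subcats}), and hence vanishes by Theorem~\ref{t:sod-DbBA}.

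For (ii), I would proceed in two stages. First, suppose $B \in \D_\mathcal{B}^+(\mathcal{A})$ is bounded below. Then each $\tau_{\leq n}B$ is bounded, and Theorem~\ref{t:sod-DbBA} provides a functorial SOD triangle $t_n \to \tau_{\leq n}B \to f_n$ with $t_n \in \D_\mathcal{T}^\bd(\mathcal{A})$ and $f_n \in \D_\mathcal{F}^\bd(\mathcal{A})$. Functoriality gives transition morphisms $t_n \to t_{n+1}$ and $f_n \to f_{n+1}$, and the cohomological characterization in Remark~\ref{rem:cohomology-split}, combined with the torsion pair on $\mathcal{B}$, identifies $\HH^p(t_n) = t(\HH^p(B))$ and $\HH^p(f_n) = f(\HH^p(B))$ for $p \leq n$ while both vanish for $p > n$. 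Hence $(t_n)$ and $(f_n)$ are direct truncation systems with terms in $\D_\mathcal{B}(\mathcal{A})$, and are therefore effective. Setting $t := \hocolim_n t_n$ and $f := \hocolim_n f_n$, effectiveness places them in $\D_\mathcal{T}(\mathcal{A})$ and $\D_\mathcal{F}(\mathcal{A})$ respectively, and passing to homotopy colimits in the triangles (noting $B = \hocolim_n \tau_{\leq n}B$) yields the desired triangle $t \to B \to f$. Second, for arbitrary $B \in \D_\mathcal{B}(\mathcal{A})$, apply the bounded-below construction to each $\tau_{\geq -m}B$; the resulting triangles $t_m \to \tau_{\geq -m}B \to f_m$ assemble, by the same cohomology analysis, into inverse truncation systems in $\D_\mathcal{B}(\mathcal{A})$, and passing to homotopy limits produces the triangle $t \to B \to f$ with $t \in \D_\mathcal{T}(\mathcal{A})$ and $f \in \D_\mathcal{F}(\mathcal{A})$.

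The main obstacle is verifying that the systems $(t_n), (f_n)$ (and analogously $(t_m), (f_m)$) assembled from bounded SOD triangles are genuine direct (resp.\ inverse) truncation systems with terms in $\D_\mathcal{B}(\mathcal{A})$, so that the effectiveness hypothesis applies and lets one read off the cohomology of the resulting homotopy (co)limits. This rests on three ingredients brought together at once: the Serre property of $\mathcal{T}$ and $\mathcal{F}$ inside $\mathcal{B}$ (Proposition~\ref{p:abelian-torsion-pair}), the cohomological characterization of SOD triangles (Remark~\ref{rem:cohomology-split}), and the functoriality of the bounded SOD decomposition inherited from Theorem~\ref{t:sod-DbBA}.
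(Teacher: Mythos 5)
Your proposal follows essentially the same strategy as the paper's proof: use effectiveness to exhibit objects of $\D_\mathcal{B}(\mathcal{A})$ as homotopy (co)limits of their truncations, reduce semi-orthogonality to the bounded case via the defining (co)limit triangles, and for completeness build the decomposition of an unbounded object by passing to (co)limits of the bounded decompositions of its truncations. The only structural difference is cosmetic — the paper treats bounded-above objects first and then dualizes to direct truncation systems for arbitrary objects, whereas you treat bounded-below objects first; the two orderings are exactly dual and both work. Your phrasing of the d\'evissage for semi-orthogonality via the two Milnor sequences is a clean repackaging of what the paper does with the defining triangle of the homotopy (co)limit.

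One step needs to be tightened. In the completeness argument you write that ``passing to homotopy colimits in the triangles \dots yields the desired triangle $t \to B \to f$'' with $f := \hocolim_n f_n$. Homotopy (co)limits of triangles in a triangulated category are not triangles in general, so as stated this does not follow. The correct move — and what the paper does — is to construct only the morphism $t \to B$ from the compatible morphisms $t_n \to \tau_{\leq n}B$ of effective truncation systems (using Lemma~\ref{l:holim-truncations} and Remark~\ref{rem:effective}), complete it to a triangle $t \to B \to f' \to \Sigma t$, and then check via the long exact cohomology sequence together with Remark~\ref{rem:cohomology-split} that $f'$ lies in $\D_\mathcal{F}(\mathcal{A})$: since $\HH^p(t) \to \HH^p(B)$ is precisely the torsion subobject inclusion for each $p$, the long exact sequence degenerates into short exact sequences and $\HH^p(f') \cong f(\HH^p(B)) \in \mathcal{F}$. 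There is no need to claim $f' \cong \hocolim_n f_n$, and avoiding that claim sidesteps the issue entirely. The same adjustment applies to the second stage with inverse truncation systems. With this correction your argument becomes a complete proof along the same lines as the paper's.
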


\begin{proof}
\textbf{Semi-orthogonality}:
We claim that
\begin{equation}
  \label{eq:Hom-vanishing-unbounded}
  \Hom_{\D(\mathcal{A})}(L, R)=0
\end{equation}
for all objects $L \in \D_\mathcal{T}(\mathcal{A})$ and $R \in
\D_\mathcal{F}(\mathcal{A})$.

Our proof is a straightforward d\'evissage argument.
Assume first that $L$ is in $\D^-_\mathcal{T}(\mathcal{A})$ and that $R$ is
in $\D^+_\mathcal{F}(\mathcal{A})$.
Then any morphism $L \to R$ factors as $L \to \tau_{\geq n} L \to
\tau_{\leq m} R \to R$ for sufficiently small $n$ and
sufficiently large $m$.
Hence the morphism vanishes by Theorem~\ref{t:sod-DbBA}.

Keep the assumption on $L$ and assume that $R$ is an arbitrary
object of $\D_\mathcal{F}(\mathcal{A})$.
Effectiveness of inverse
truncation systems with terms in 
$\D_\mathcal{B}(\mathcal{A})$ and
Lemma~\ref{l:holim-truncations} provide a triangle
\begin{equation*}
  R \ra 
  \prod_{n \in \bN} \tau_{\geq -n} R 
  \ra
  \prod_{n \in \bN} \tau_{\geq -n} R 
  \ra
  \Sigma R
\end{equation*}
in $\D(\mathcal{A})$.
The functor $\Hom_{\D(\mathcal{A})}(L,-)$ vanishes on the second and
third term and their shifts by the universal property of
products, boundedness from below of
the truncations and the vanishing result already proven.
Hence it vanishes also on the first term as desired.

Similarly, 
effectiveness of direct
truncation systems with terms in 
$\D_\mathcal{B}(\mathcal{A})$
and the dual of
Lemma~\ref{l:holim-truncations}
give
\eqref{eq:Hom-vanishing-unbounded}
for arbitrary objects $L$ in $\D_\mathcal{T}(\mathcal{A})$ and
$R$ in $\D_\mathcal{F}(\mathcal{A})$.

\textbf{Completeness:}
To finish the proof, we need to show that any object $B$ in
$\D_{\mathcal{B}}(\mathcal{A})$ fits into a triangle
\begin{equation}
\label{eq:tria-complete}
T \to B \to F \to \Sigma T 
\end{equation}
with $T$ in $\D_{\mathcal{T}}(\mathcal{A})$ and
$F$ in $\D_{\mathcal{F}}(\mathcal{A})$.

Assume first that $B$ is an object in $\D^-_\mathcal{B}(\mathcal{A})$
and consider its associated standard inverse truncation system
$(B_n)_{n \in \mathbb{N}}$ where $B_n = \tau_{\geq -n} B$
(see Example~\ref{ex:ITS}).
Since the cohomology of each $B_n$ is bounded,
the semi-orthogonal decomposition in Theorem~\ref{t:sod-DbBA}
provides commutative squares
\begin{equation*}
  \xymatrix{
    T_n \ar[r]\ar[d] & T_{n+1}\ar[d] \\
    B_n \ar[r] & {B_{n+1},} 
  }
  \qquad
  \xymatrix{
    \HH^p(T_n) \ar[r]\ar[d] & \HH^p(T_{n+1})\ar[d] \\
    \HH^p(B_n) \ar[r] & \HH^p(B_{n+1}), 
  }
\end{equation*}
where each $T_n$ is the projection of $B_n$ to
$\D^\bd_\mathcal{T}(\mathcal{A})$ along this semi-orthogonal decomposition.
By Remark~\ref{rem:cohomology-split}, the vertical maps in the right
diagram are just the torsion subobjects with respect to the
torsion pair $(\mathcal{T}, \mathcal{F})$.
The characterization of inverse truncation systems in
Remark~\ref{rem:ITS-alternative} 
and the fact that 
$(B_n)_{n \in \mathbb{N}}$
is such a system make it 
clear that $(T_n)_{n \in \mathbb{N}}$
is such a system as well.
By assumption, it is effective, and hence obtained from its
homotopy limit $T$ in $\D(\mathcal{A})$ by truncation in a
canonical way, see
Remark~\ref{rem:effective}. 
Without loss of generality we may hence assume that 
$T_n=\tau_{\geq -n}T$ for all $n \in \bN$.
It is clear that $T \in \D^-_\mathcal{T}(\mathcal{A})$.
Note that $T$ and $B$ are the homotopy limits of 
the effective inverse truncation systems 
$(T_n)_{n \in \mathbb{N}}$ and $(B_n)_{n \in \mathbb{N}}$,
by Lemma~\ref{l:holim-truncations}.
Hence the morphism $(T_n)_{n \in \mathbb{N}} \ra (B_n)_{n \in \mathbb{N}}$
of inverse systems induces a morphism
$T \ra B$ which makes the following two squares commutative.
\begin{equation*}
  \xymatrix{
    {T} \ar[r]\ar[d] & {T_{n}} \ar[d] \\
    {B} \ar[r] & {B_{n}} 
  }
  \qquad
  \xymatrix{
    {\HH^p(T)} \ar[r]\ar[d] & {\HH^p(T_{n})}\ar[d] \\
    {\HH^p(B)} \ar[r] & {\HH^p(B_{n})} 
  }
\end{equation*}
Obviously, the horizontal arrows in the right
square are isomorphisms for all $n \geq -p$.
Hence $\HH^p(T) \to \HH^p(B)$ is the torsion
subobject with respect to the torsion pair $(\mathcal{T},
\mathcal{F})$, for each $p \in \bZ$.
Complete the morphism $T \to B$ to a triangle as in \eqref{eq:tria-complete}.
Then the associated long exact sequence on cohomology 
splits up into short exact sequences with $\HH^p(F)$ in $\mathcal{F}$.
In particular, the object $F$ is in $\D^-_\mathcal{F}(\mathcal{A})$.

This establishes the semi-orthogonal decomposition
$
\D^-_{\mathcal{B}}(\mathcal{A}) =
\langle \D^-_{\mathcal{F}}(\mathcal{A}), 
\D^-_{\mathcal{T}}(\mathcal{A})\rangle.
$
By the dual version of the argument above,
starting with an object $B$ in $\D_\mathcal{B}(\mathcal{A})$
and 
using the fact that all direct truncation systems with terms in
$\D_{\mathcal{B}}(\mathcal{A})$ are effective,
we get the desired semi-orthogonal decomposition of
$\D_\mathcal{B}(\mathcal{A})$. 
\end{proof}


\section{Gerbes}
\label{sec:gerbes-and-twisted}
In this section, we collect some results from the theory on gerbes,
bandings by abelian groups and the relation to cohomology.
Throughout the section, $S$ will be an arbitrary algebraic stack.
By a \emph{stack} we mean a, not necessarily algebraic, stack in groupoids over the
big \emph{fppf} site of schemes over $S$. 
The symbol $\Delta$ will always denote an abelian group
in~$S_\fppf$. 
Later on, we will specialize to the situation when $\Delta$ is a diagonalizable group.
Accordingly, we will use multiplicative notation for
$\Delta$ and its cohomology groups $\HH^i(S_\fppf, \Delta)$.

\begin{definition}[{\cite[Definition~3.15]{lmb2000}, \cite[\sptag{06NZ}]{stacks-project}}]
A \emph{gerbe} over $S$ is a stack (in groupoids) $\alpha\colon X \to S$ satisfying the following
conditions.
\begin{enumerate}
\item
The diagonal $X \to X\times_S X$ is an epimorphism (see \cite[Definition~3.6]{lmb2000}).
\item
\label{def-gerbe-structure}
The structure morphism $\alpha\colon X \to S$ is an epimorphism.
\end{enumerate}
A gerbe is called \emph{trivial} if the structure morphism in \ref{def-gerbe-structure} splits.
\end{definition}

The prototypical example of a gerbe is the \emph{classifying stack} $\BB G$
for a group $G$ in $S_\fppf$.
In fact, any gerbe can be viewed as an \emph{fppf}-form of a classifying stack.

For any stack $\alpha\colon X \to S$,
we may consider its \emph{inertia stack} $I_{X/S} \to X$ (\cite[\sptag{034I}]{stacks-project}),
which is a group object in $X_\fppf$.
Its points over a morphism $x \colon U \to X$ are simply the 2-automorphisms $\gamma$ of $x$
mapping to the identity under the composition $\alpha \circ x$.
Any object $\mathcal{F}$ in $X_\fppf$ is endowed with a canonical right action by the inertia $I_{X/S}$,
called the \emph{inertial action}.
Explicitly, it is given on sections over $x$ by
\begin{equation}
\label{eq-inertial-action}
\mathcal{F}(x)\times I_{X/S}(x) \to \mathcal{F}(x),\qquad
(s, \gamma) \mapsto \mathcal{F}(\gamma)(s).
\end{equation}
We recall the following fundamental fact about the inertial action.
\begin{proposition}
\label{p-gerbe-pb-ff}
Let $\alpha\colon X \to S$ be a gerbe.
Then the functor $\alpha^{*}\colon S_\fppf \to X_\fppf$ is fully faithful with essential
image consisting of sheaves on which the inertia $I_{X/S}$ acts trivially.
\end{proposition}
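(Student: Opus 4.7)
My plan is to verify, in order: (i) triviality of the inertial action on any pullback $\alpha^*\mathcal{G}$, (ii) full faithfulness of $\alpha^*$, and (iii) essential surjectivity onto the subcategory of sheaves with trivial inertial action.

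For (i), fix a sheaf $\mathcal{G}$ on $S_\fppf$ and a point $x\colon T \to X$. Given $\gamma \in I_{X/S}(x)$, i.e., a 2-automorphism of $x$ with $\alpha(\gamma) = \id_{\alpha x}$, the action \eqref{eq-inertial-action} on the set $(\alpha^*\mathcal{G})(x) = \mathcal{G}(\alpha x)$ is by construction $\mathcal{G}(\alpha\gamma) = \mathcal{G}(\id_{\alpha x}) = \id$, so the inertial action on $\alpha^*\mathcal{G}$ is trivial.

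For (ii) and (iii), I would reduce everything to fppf descent along $\alpha$. Since $\alpha$ is an epimorphism of stacks, every scheme over $S$ is, fppf-locally, refined by a scheme lifting through $X$; in this sense $\alpha$ behaves like a cover, and fppf descent identifies $S_\fppf$ with the 2-category of sheaves $\mathcal{F}$ on $X_\fppf$ equipped with a descent datum, i.e., an isomorphism $\psi\colon p_1^*\mathcal{F} \sira p_2^*\mathcal{F}$ on $X \times_S X$ satisfying a cocycle condition on $X \times_S X \times_S X$.

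The crucial step is to show that on the full subcategory of sheaves $\mathcal{F}$ on $X_\fppf$ with trivial inertial action, there is a \emph{unique}, hence canonical, descent datum. Granting this, the forgetful functor from sheaves with descent data to plain sheaves on $X_\fppf$ restricts to an equivalence onto the subcategory of sheaves with trivial inertial action, simultaneously proving (ii) and (iii). To construct the canonical datum, consider a pair $(x,y)$ of objects of $X$ over $T$ with $\alpha x = \alpha y$, representing a point of $X \times_S X$. The sheaf $\underline{\mathrm{Isom}}_X(x,y)$ is a torsor under $\Aut_X(x) = I_{X/S}|_x$ via post-composition, and is fppf-locally non-empty because the diagonal of $X/S$ is an epimorphism. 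Any local section $\phi\colon x \sira y$ induces a map $\mathcal{F}(\phi)\colon \mathcal{F}(y) \to \mathcal{F}(x)$; two choices of $\phi$ differ by an element of $I_{X/S}|_x$, which acts trivially on $\mathcal{F}$ by hypothesis. The resulting local map is therefore independent of $\phi$, glues to a global isomorphism $\psi\colon p_2^*\mathcal{F} \sira p_1^*\mathcal{F}$, and the cocycle condition follows automatically from the canonicity of $\psi$.

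The main obstacle is this last construction: checking rigorously that the locally-defined isomorphism descends to a bona fide morphism of sheaves on the stack $X \times_S X$, which requires a careful unwinding of the torsor structure on the Isom-sheaves and a verification that the induced descent datum is functorial in the sheaf $\mathcal{F}$.
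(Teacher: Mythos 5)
The paper does not give its own proof of this statement; it simply cites Lieblich's thesis. Your descent argument is the standard one and is essentially correct, though a couple of details need repair. A $T$-point of $X \times_S X$ is a triple $(x, y, \sigma)$ with $\sigma\colon \alpha x \Rightarrow \alpha y$ a $2$-isomorphism, not a pair with $\alpha x = \alpha y$ literally; the relevant Isom-sheaf is therefore the subsheaf of isomorphisms $\phi\colon x \to y$ satisfying $\alpha(\phi) = \sigma$, and it is this subsheaf that is a torsor under $I_{X/S}|_x$ and is fppf-locally nonempty because the diagonal of $X/S$ is an epimorphism. Relatedly, your identification $\Aut_X(x) = I_{X/S}|_x$ is correct only when $S$ is an algebraic space; over a genuine stack $S$ the full automorphism sheaf is strictly larger than the relative inertia, so it is the constrained Isom-sheaf just described that carries the torsor structure you actually need. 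With these corrections, two local choices of $\phi$ differ by a section of $I_{X/S}|_x$ acting trivially on $\mathcal{F}$ by hypothesis, so the canonical descent datum is well defined; the cocycle condition and naturality in $\mathcal{F}$ then follow from the canonicity you observe, together with functoriality of $\mathcal{F}(\phi)$. Finally, note that your part (i) does double duty: under the descent equivalence along the effective epimorphism $\alpha$, it shows every sheaf on $X_\fppf$ equipped with a descent datum already has trivial inertial action, which combined with the uniqueness of the descent datum gives both full faithfulness and the identification of the essential image.
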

\begin{proof}
See for instance \cite[Lemma~2.1.1.17]{lieblich2004}.
\end{proof}

\begin{definition}
A gerbe $\alpha\colon X \to S$ is called \emph{abelian} if the inertia group $I_{X/S} \to X$
is abelian.
We denote the full subcategory of stacks over $S$ which are abelian gerbes by
$\AbGerbe(S)$.
\end{definition}

The inertia acts on itself by conjugation.
In particular, Proposition~\ref{p-gerbe-pb-ff} shows that the inertia group of an abelian gerbe $\alpha\colon X \to S$
descends to an abelian
group object in $S_\fppf$, namely $\alpha_* I_{X/S}$.
Furthermore, given any 1-morphism $\rho\colon X \to Y$ of stacks over $S$,
we get an induced group homomorphism $I_{X/S} \to \rho^*I_{Y/S}$ in $X_\fppf$,
and if $X$ and $Y$ are abelian gerbes, this descends to a group homomorphism in $S_\fppf$.
This association gives a functor
\begin{equation}
\label{eq-band-functor}
\Band\colon \AbGerbe(S) \to \AbGroup(S)
\end{equation}
to the category of abelian group objects in $S_\fppf$.

\begin{definition}
\label{def-banding}
Let $\Delta$ be an abelian group object in $S_\fppf$.
The essential fiber of the functor \eqref{eq-band-functor} over $\Delta$ is called
the \emph{category of gerbes banded by $\Delta$} or \emph{of $\Delta$-gerbes}. 
Explicitly, a $\Delta$-gerbe is given by a pair $(\alpha, \iota)$ where $\alpha\colon X \to S$ is an abelian
gerbe and $\iota\colon \Delta \xrightarrow{\sim} \alpha_*I_{X/S}$ is a group isomorphism
called a \emph{banding} of $\alpha$ by $\Delta$.
The group $\Delta$ is called the band of $(\alpha,\iota)$.
A morphism of $\Delta$-gerbes is simply a morphism of stacks such that corresponding morphism on inertia
induces the identity on $\Delta$ via the bandings.
By abuse of notation, we usually denote a $\Delta$-gerbe $(\alpha, \iota)$ by $\alpha$ or even $X$.
\end{definition}

\begin{remark}
Giraud introduces a more general notion of banding applying to non-abelian gerbes
\cite[Chapter~IV]{giraud1971}.
We will not need this, more complicated, theory here.
\end{remark}

Note that a 1-morphism of gerbes over $S$ is always an epimorphism (\cite[Lemma~3.17]{lmb2000}),
and that it is an isomorphism if and only if it induces an isomorphism on inertia.
In particular, the category of $\Delta$-gerbes is a 2-groupoid.

\begin{example}
\label{ex:simple-gerbes}
Let $S$ be an algebraic stack,
and let $\Delta$ and $\Delta'$ be abelian groups in $S_\fppf$.
\begin{enumerate}
\item
The classifying stack $\BB \Delta$ is endowed with a canonical structure of $\Delta$-gerbe.
\item
Given a pair of abelian gerbes $X$ and $X'$ over $S$,
banded by $\Delta$ and $\Delta'$, respectively,
the product $X\times_S X'$ has a naturally defined banding by $\Delta\times \Delta'$.
\end{enumerate}
\end{example}

The following example illustrates that it is very natural to consider gerbes over
genuine algebraic stacks.
\begin{example}
\label{ex-gerbes}
Let $S$ be an algebraic stack. Assume that we have a central extension
$1 \to \Delta \to G \to H \to 1$ of groups in $S_\fppf$.
Then the induced morphism of classifying stacks $\BB G \to \BB H$
has a canonical structure of a $\Delta$-gerbe.
The following examples are two important special cases with $S = \Spec \mathbb{Z}$.
\begin{enumerate}
\item
\label{it-universal-projective}
The morphism $G \to H$ is the quotient map $\GL_n \to \PGL_n$.
In this case we obtain a $\GGm$-gerbe $\BB\GL_n \to
\BB\PGL_n$.
\item
\label{it-universal-root}
The morphism $G \to H$ is the morphism $\GGm \to \GGm$ given by
$x \mapsto x^n$. 
In this case the corresponding $\mu_n$-gerbe $\BB \GGm \to \BB \GGm$ can be thought
of as the $n$-th root stack of the universal line bundle $[\bA_1/\GGm]$ on $\BB
\GGm$ (cf.~\cite[Definition~2.2.6]{cadman2007}).
\end{enumerate}
\end{example}

Next we discuss functoriality of banded gerbes with respect to change of group.

\begin{construction}
\label{cons-gerbe-pf}
Let $\varphi\colon \Delta \to \Delta'$ be a homomorphism of abelian group objects in $S$,
and let $\alpha\colon X \to S$ be a gerbe banded by $\Delta$.
We construct a gerbe $\varphi_*\alpha\colon \varphi_*  X \to S$
banded by $\Delta'$ together with a morphism
\begin{equation}
\label{eq-gerbe-pf}
\rho \colon X \to \varphi_*X
\end{equation}
of gerbes over $S$ inducing the homomorphism $\varphi$ on the bands as follows.
 
Consider $X \times \BB \Delta'$ with its natural banding by $\Delta \times \Delta'$.
We define $\varphi_*X$ as the \emph{rigidification} (see \cite[Appendix~A]{aov2008})
of $X \times \BB \Delta'$ in the kernel of the epimorphism
$\Delta \times \Delta' \to \Delta'$ given by $(a, b) \mapsto \varphi(a) b$.
The morphism $\rho$ is the composition of the obvious map $X \to X\times \BB \Delta'$ followed by the
rigidification map.
\end{construction}

The construction above shows that given a gerbe banded by $\Delta$,
any morphism $\varphi\colon \Delta \to \Delta'$ lifts to a morphism of banded gerbes.
The following proposition,
which we surprisingly could not find in the literature,
shows that this lift is essentially unique.
In particular, Construction~\ref{cons-gerbe-pf} is functorial in a weak sense.

\begin{proposition}
\label{gerbe-functorial-group}
Let $\Delta$ be an abelian group in $S_\fppf$,
and let $\alpha\colon X \to S$ be a gerbe banded by $\Delta$.
Then the obvious functor
\begin{equation}
\Band_{X/} \colon \AbGerbe(S)_{X/} \to \AbGroup(S)_{\Delta/}
\end{equation}
from the category of abelian gerbes over $S$ under $X$ to the category of abelian groups in $S_\fppf$
under $\Delta$ is an equivalence of categories.
\end{proposition}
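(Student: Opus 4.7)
The plan is to construct an explicit quasi-inverse functor $G\colon \AbGroup(S)_{\Delta/} \to \AbGerbe(S)_{X/}$ to $\Band_{X/}$ using Construction~\ref{cons-gerbe-pf}.

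\textbf{Definition of $G$ and one direction.} On objects, send $(\Delta', \varphi)$ to $(\varphi_* X, \rho_\varphi)$. By construction $\Band(\rho_\varphi) = \varphi$, which gives $\Band_{X/} \circ G \cong \id$ directly. On morphisms, given $\psi\colon (\Delta', \varphi) \to (\Delta'', \varphi'')$ with $\psi \varphi = \varphi''$, apply Construction~\ref{cons-gerbe-pf} to $\psi$ to obtain $\rho_\psi\colon \varphi_* X \to \psi_*(\varphi_* X)$, and combine this with the canonical identification $\psi_*(\varphi_* X) \cong \varphi''_* X$ of $\Delta''$-gerbes under $X$ (obtained by iterating the construction) to define $G(\psi)\colon \varphi_* X \to \varphi''_* X$.

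\textbf{The main step: $G \circ \Band_{X/} \cong \id$.} Given $(Y, \rho\colon X \to Y)$ with $\Delta' := \Band Y$ and $\varphi := \Band(\rho)$, I construct a natural isomorphism $\varphi_* X \sira Y$ of $\Delta'$-gerbes compatible with $\rho_\varphi$ and $\rho$. The key is to extend $\rho$ to a morphism $\tilde\rho\colon X \times \BB\Delta' \to Y$ via the ``twisting'' formula $(x, P) \mapsto \rho(x) \wedge^{\Delta'} P$, which makes sense because the inertial action of $I_{Y/S}$ on $\rho(x)$ is, via the banding of $Y$, an action by $\Delta'$ (cf.~Proposition~\ref{p-gerbe-pb-ff}). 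A direct computation shows that $\tilde\rho$ induces the map $(a, b) \mapsto \varphi(a) b$ on the inertia group $\Delta \times \Delta' \to \Delta'$, which is exactly the quotient map whose kernel defines $\varphi_* X$ as a rigidification. The universal property of rigidification~\cite[Appendix~A]{aov2008} then yields a unique factorization $\tilde\rho = f \circ q$ through the rigidification map $q\colon X \times \BB\Delta' \to \varphi_* X$. Restricting along the inclusion $X \hookrightarrow X \times \BB\Delta'$ gives $f \circ \rho_\varphi \cong \rho$, and the induced map of $f$ on bands is $\id_{\Delta'}$ by construction; a morphism of abelian gerbes that is the identity on bands induces an isomorphism on inertia and is therefore an isomorphism.

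\textbf{Main obstacle.} The most delicate step is the stack-theoretic construction of the twisting morphism $\tilde\rho$, in particular the intrinsic definition of $\rho(x) \wedge^{\Delta'} P$ inside the abelian gerbe $Y$ and its functoriality in pairs $(x, P)$. Once this is in place, uniqueness of the lift $f$ up to unique 2-isomorphism, and hence the naturality of $G \circ \Band_{X/} \cong \id$, is built into the universal property of rigidification.
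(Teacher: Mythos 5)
Your argument takes a genuinely different route from the paper's. The paper handles full faithfulness by sheafifying the map $\Hom_{X/}(\rho,\rho') \to \Hom_{\Delta/}(\varphi,\varphi')$ to a morphism of stacks $\Phi$ over $S$ and checking $\Phi$ is an isomorphism \emph{fppf}-locally, where everything reduces to classifying stacks and an explicit description of $\sheafHom(\BB\Delta, \BB\Gamma')$ from \cite[Lemma~3.9(1)]{aov2008}. You instead build an explicit quasi-inverse by extending $\rho$ to $\tilde\rho\colon X\times\BB\Delta' \to Y$ via the canonical ``twist'' action of $\BB\Delta'$ on the $\Delta'$-gerbe $Y$ and then invoking the universal property of rigidification. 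Your inertia calculation $(a,b)\mapsto\varphi(a)b$ is correct (with the standard convention for the contracted product), and the mechanism — that $\tilde\rho$ annihilates exactly the inertia subgroup that the rigidification in Construction~\ref{cons-gerbe-pf} kills, so $\tilde\rho$ factors uniquely through $\varphi_*X$ — is exactly right. This is arguably more conceptual than the paper's local reduction, and it makes the quasi-inverse visible rather than abstract. The trade-off is that the twist operation $Y\times\BB\Delta'\to Y$ is a nontrivial piece of Giraud's theory being invoked as a black box; the paper's local argument never needs it.

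There are two places where your write-up is thinner than it should be. First, in defining $G$ on morphisms you appeal to a ``canonical identification'' $\psi_*(\varphi_*X)\cong\varphi''_*X$ ``obtained by iterating the construction,'' but this compatibility of iterated rigidification is not automatic and is precisely the kind of functoriality the proposition is asserting; as stated, this risks circularity. You can avoid the issue by not defining $G$ as a functor up front: instead, prove essential surjectivity from Construction~\ref{cons-gerbe-pf}, and prove full faithfulness directly by combining your twist argument for $(Y',\rho')$ with the rigidification description of $Y\cong\varphi_*X$ — the universal property of rigidification then gives both existence and uniqueness (up to unique $2$-isomorphism) of a morphism $Y\to Y'$ under $X$ over a given $\psi\colon\Delta'\to\Delta''$, which is exactly full faithfulness in the stacky sense. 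Second, the statement that ``a morphism of abelian gerbes that is the identity on bands is an isomorphism'' should be cited (it appears just after Definition~\ref{def-banding}) or quickly justified — you are implicitly using it to conclude $f$ is an isomorphism after showing $\Band(f)=\id$. With these points tightened, the argument would be a clean and self-contained alternative to the paper's proof.
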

\begin{proof}
Construction~\ref{cons-gerbe-pf} shows that the functor
$\Band_{X/}$ is essentially surjective on objects.
Let $\rho\colon X \ra Y$ and  $\rho'\colon X \ra Y'$ be objects of $\AbGerbe(S)_{X/}$,
and denote their images in $\AbGroup(S)_{\Delta/}$ by
$\varphi\colon \Delta \to \Gamma$ and~$\varphi'\colon \Delta \to \Gamma'$, respectively.
We need to show that
\begin{equation}
\label{eq-band-equivalence}
\Hom_{X/}(\rho, \rho') \to \Hom_{\Delta/}(\varphi, \varphi')
\end{equation}
is and equivalence of categories.
To do so, we first note that \eqref{eq-band-equivalence} is functorial in~$S$,
and that we in fact have a morphism of stacks
\begin{equation}
\label{eq-band-stacky-equivalence}
\Phi\colon \sheafHom_{X/}(\rho, \rho') \to \sheafHom_{\Delta/}(\varphi, \varphi')
\end{equation}
over~$S$.
Explicitly, this morphism is constructed as the canonical map in the diagram
\begin{equation}
\label{eq-big-and-scary}
\xymatrix{
\sheafHom_{\Delta/}(\varphi, \varphi')\ar[ddd]\ar[rrr] & & & S \ar[ddd]^{\varphi'}\\
& \sheafHom_{X/}(\rho, \rho')\ar[d]\ar[r]\ar[ul]_\Phi & S\ar[ur]^\id \ar[d]^{\rho'} & \\
& \sheafHom(Y, Y') \ar[r]^{-\circ\rho}\ar[dl] & \sheafHom(X, Y')\ar[dr] & \\
\sheafHom(\Gamma, \Gamma') \ar[rrr]^{-\circ\varphi} & & & \sheafHom(\Delta, \Gamma'), \\
}
\end{equation}
where the inner and the outer squares are cartesian.

To verify that $\Phi$ is an isomorphism, we may work locally on~$S$.
In particular, we may assume that the stacks $X, Y$ and~$Y'$ are the classifying
stacks
$\BB \Delta$, $\BB \Gamma$ and~$\BB \Gamma'$, respectively.
Furthermore, we may assume that the morphisms $\rho$ and $\rho'$ are given by extension of torsors along
$\varphi$ and $\varphi'$, respectively.
In this situation, we have $\sheafHom(X, Y') \cong [\sheafHom(\Delta, \Gamma')/\Gamma']$,
where $\Gamma'$ acts trivially,
and the map to $\sheafHom(\Delta, \Gamma')$ is induced by forgetting the group action
(see e.g. Abramovich~\emph{et~al.}~\cite[Lemma~3.9(1)]{aov2008} and its proof).
Plugging this into \eqref{eq-big-and-scary},
together with the similar description for $\sheafHom(Y, Y')$ and the obvious descriptions for $\rho'$
and $-\circ\rho$,
it becomes clear that $\Phi$ is an isomorphism.
\end{proof}

\begin{construction}
\label{cons-gerbe-operations}
Let $\Delta$ be an abelian group in $S_\fppf$,
and let $\alpha, \beta\colon X \to S$ be $\Delta$-gerbes.
Denote the multiplication map on $\Delta$ by $m\colon \Delta \times \Delta \to \Delta$,
and the $n$-th power map, for $n\in \mathbb{Z}$, by~$p_n\colon \Delta \to \Delta$.
Using Construction~\ref{cons-gerbe-pf}, we define
\begin{equation}
\label{eq-gerbe-mult-inverse}
\alpha\beta := m_*(\alpha \times \beta), \qquad
\alpha^n := (p_n)_*\alpha.
\end{equation}
\end{construction}
Morally, Construction~\ref{cons-gerbe-operations} gives the 2-groupoid of $\Delta$-gerbes
the structure of an abelian group,
a statement which presumably could be made precise by use of Proposition~\ref{gerbe-functorial-group}.
At least, it is clear that the set of isomorphism classes of $\Delta$-gerbes forms an abelian group.

It is easy to see that we also have functoriality with respect to change of stacks.
That is, let $f\colon T \to S$ be a morphism of algebraic stacks
and let $\alpha\colon X \to S$ be a $\Delta$-gerbe.
Then the pull-back $f^*\alpha$ has a natural structure of $f^*\Delta$-gerbe.

\begin{theorem}[Giraud]
\label{t:gerbes-H2}
Let $S$ be an algebraic stack and let $\Delta$ be an abelian group in~$S_\fppf$.
Then the group $\HH^2(S_\fppf, \Delta)$ is canonically isomorphic to the set of isomorphism
classes of $\Delta$-gerbes over $S_\fppf$,
with its group structure induced by the operations in Construction~\ref{cons-gerbe-operations}.
Given a $\Delta$-gerbe $\alpha$ over $S_\fppf$,
we denote its class in $\HH^2(S_\fppf, \Delta)$ by~$[\alpha]$.
We have the following functorial properties:
\begin{enumerate}
\item
Given a morphism $f\colon T \to S$ of algebraic stacks,
we have $[f^*\alpha] = f^*[\alpha]$ in $H^2(T_\fppf, f^*\Delta)$.
\item
Given a homomorphism of abelian groups $\varphi\colon \Delta \to \Delta'$,
we have $[\varphi_\ast \alpha] = \varphi_*[\alpha]$ in $H^2(S_\fppf, \Delta')$.
\end{enumerate}
\end{theorem}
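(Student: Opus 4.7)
The plan is to establish the bijection through Čech cohomology, verify the group structure, and then check functoriality in both arguments. My first step is to use the definition of a gerbe to produce local trivializations: since $\alpha \colon X \to S$ is an epimorphism of stacks, there exists an \emph{fppf} cover $p \colon U \to S$ equipped with a section $s \colon U \to X$ of $\alpha \circ p$. Over the fiber product $U_1 := U \times_S U$, the sheaf $L := \sheafHom(p_1^* s, p_2^* s)$ is locally nonempty by the surjectivity of the diagonal, and through the banding $\iota \colon \Delta \sira \alpha_* I_{X/S}$ it naturally acquires the structure of a $\Delta$-torsor on $U_1$. The composition of pullbacks on $U_2 := U \times_S U \times_S U$ provides an isomorphism $p_{12}^* L \otimes p_{23}^* L \sira p_{13}^* L$ of $\Delta$-torsors, and these data constitute a Čech 2-cocycle representative of a class $[\alpha] \in \HH^2(S_\fppf, \Delta)$. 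A standard argument shows this class is independent of the choice of $p$ and $s$, giving a well-defined map on isomorphism classes.

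For the inverse construction, given a class represented by a 2-cocycle $c$ on a hypercover $U_\bullet \to S$, I would construct a gerbe $X_c$ by descent. The descent datum consists of the trivial gerbe $\BB\Delta|_{U_0}$ together with an automorphism on $U_1$ built from $c$; the 2-cocycle condition on $U_2$ is exactly what is required for this to assemble into a stack over $S$. One checks that the resulting stack is a $\Delta$-gerbe whose Čech class is $[c]$, and conversely that a coboundary between two cocycles yields an isomorphism of the corresponding gerbes.

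For the group structure, tracking the construction shows that the Baer-like product $\alpha\beta = m_*(\alpha \times \beta)$ from Construction~\ref{cons-gerbe-operations} carries the torsor $L_\alpha \times L_\beta$ on $U_1$ to its image under the multiplication $m\colon \Delta \times \Delta \to \Delta$, which is the pointwise product of Čech cocycles, i.e.\ the sum in $\HH^2(S_\fppf, \Delta)$. The functoriality statements reduce to the same dictionary: given $f\colon T \to S$, the pullback cover $f^* p$ and section $f^* s$ produce the cocycle $f^* c$, so $[f^*\alpha] = f^*[\alpha]$; and for $\varphi\colon \Delta \to \Delta'$, the rigidification in Construction~\ref{cons-gerbe-pf} applies $\varphi_*$ to the transition torsors, so $[\varphi_*\alpha] = \varphi_*[\alpha]$.

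The principal obstacle is foundational: executing Čech descent of stacks over an arbitrary algebraic stack $S$ requires some care, especially when comparing Čech $\HH^2$ with the derived-functor $\HH^2$ in the absence of quasi-compactness hypotheses, and some attention must be paid to the fact that 2-cocycles naturally take values in torsors rather than sections. For a clean exposition I would appeal directly to Giraud's treatment in \cite[Chapter~V, Sections~3 and~4]{giraud1971}, which handles these comparisons in full generality, and then merely record the compatibilities (a)~and~(b), which are transparent from the cocycle picture outlined above.
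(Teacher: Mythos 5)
Your proposal is correct and ultimately takes the same approach as the paper, which simply cites the relevant sections of Giraud (IV.3.1 and IV.3.4 for the comparison with cohomology and change of groups, V.1 for change of topos) rather than reproving the correspondence. Your Čech-theoretic sketch of the dictionary — section over a cover, $\Delta$-torsor of isomorphisms on double overlaps, composition constraint on triple overlaps — is a faithful outline of Giraud's construction, and your closing acknowledgment that the torsor-valued cocycles and the Čech-versus-derived comparison require Giraud's full machinery is exactly where the paper likewise defers to the reference.
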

\begin{proof}
This is worked out in various places in \cite{giraud1971}.
Functoriality with respect to change of groups is described in Section~IV.3.1.
Comparison with usual cohomology is in Section~IV.3.4,
and functoriality with respect to change of topos in Section~V.1.
\end{proof}

\section{Twisted sheaves}
\label{sec:quasi-coher-sheav}
In this section, $\Delta$ will denote a diagonalizable group scheme (over $\Spec \mathbb{Z}$)
with character group~$A := \Hom_{\Spec\mathbb{Z}}(\Delta, \GGm)$.
Recall that $A$ is a finitely generated abelian group.
We will usually use multiplicative notation for $A$,
except when we identify it with something like the additive group of $\mathbb{Z}$.

Let $S$ be an algebraic stack.
By a $\Delta$-gerbe on $S$ we mean a $\Delta_S$-gerbe on
$S$.
Fix a $\Delta$-gerbe $\alpha\colon X \to S$.
Note that $X$ is isomorphic to $\BB \Delta_S$ locally on $S$.
In particular, $X$ is an algebraic stack,
and it is reasonable to talk about quasi-coherent sheaves on~$X$.

\begin{remark}
\label{rem-gerbe-ff}
Let $\alpha\colon X \to S$ be a $\Delta$-gerbe.
Note that the structure morphism $\alpha$ is smooth and surjective.
Indeed, this is true for any algebraic gerbe
\cite[Proposition~A.2]{bergh2017}.
Moreover, assume that $Y$ is a $\Delta'$-gerbe over $S$
and let $\rho\colon X \to Y$ be a morphism over $S$ with induced morphism $\varphi\colon \Delta_S \to \Delta'_S$ on bands.
Then $\rho$ factors as $X \to X\times\BB \Delta' \to \varphi_*X \cong Y$ by Construction~\ref{cons-gerbe-pf}
combined with Proposition~\ref{gerbe-functorial-group}.
Note that the first morphism in the factorization is a
$\Delta'$-torsor 
and that the second morphism is a gerbe banded by~$\Delta$.
In particular, the morphism $\rho$ is faithfully flat.
See also Remark~\ref{rem-gerbe-concentrated} for cohomological properties of $\rho$.
\end{remark}

\begin{definition}
\label{def-homogeneous-sheaf}
Let $\mathcal{F}$ be an object in $\Mod(X_\fppf, \mathcal{O}_X)$,
and let $\chi\colon \Delta_S \to \GGms{S}$ be a character.
Note that $\Delta_X$ acts on $\mathcal{F}$ from the right via the inertial action \eqref{eq-inertial-action}
and the banding (see Definition~\ref{def-banding}) of $X$,
and that $\GGm$ acts on $\mathcal{F}$ from the left via the inclusion $\GGms{X} \subset \mathcal{O}_X$.
We define the \emph{$\chi$-homogeneous subsheaf} $\mathcal{F}_\chi$ of $\mathcal{F}$ by
\begin{equation}
  \label{eq:chi-subsheaf}
  \mathcal{F}_\chi = \{s \in \mathcal{F} \mid 
  \text{$s\cdot\gamma = \chi(\gamma)s$ for all $\gamma
  \in \Delta_X$}\}.
\end{equation}
An object $\mathcal{F}$ of $\Mod(X_\fppf, \mathcal{O}_X)$ is
called \emph{$\chi$-homogeneous}
or \emph{homogeneous of degree $\chi$}
provided that $\mathcal{F} = \mathcal{F}_\chi$,
and we denote the full subcategory of $\chi$-homogeneous objects in $\Qcoh(X_\fppf, \mathcal{O}_X)$
by $\Qcoh_\chi(X_\fppf, \mathcal{O}_X)$.
\end{definition}

\begin{remark}
\label{rem:def-Qcoh-chi}
Note that the expression
\eqref{eq:chi-subsheaf} does not make sense in the lisse--\'etale
topos unless $\Delta_X$ is smooth over $X$.
When we talk about homogeneous sheaves in $\Qcoh(X_\liset, \mathcal{O}_X)$,
we implicitly transport the subcategory
$\Qcoh_\chi(X_\fppf, \mathcal{O}_X) \subset \Qcoh(X_\fppf, \mathcal{O}_X)$
to a subcategory $\Qcoh_\chi(X_\liset, \mathcal{O}_X) \subset \Qcoh(X_\liset, \mathcal{O}_X)$
under the equivalence obtained by restriction.
We will usually simply write $\Qcoh_\chi(X)$ for any of these subcategories. 
\end{remark}

\begin{definition}[Twisted sheaves]
\label{def:twisted}
Let $\alpha\colon X \to S$ be a $\GGm$-gerbe and let $\id\colon \GGm \to \GGm$
be the identity character.
An \emph{$\alpha$-twisted quasi-coherent sheaf on $S$} is simply an object of $\Qcoh_\id(X)$.
We will often use the notation $\Qcoh_{\alpha}(S)$ instead
of $\Qcoh_\id(X)$.
\end{definition}

\begin{remark}
\label{rem-twisted-abuse}
Since the category $\Qcoh_{\alpha}(S)$, up to equivalence,
obviously depends only on the class of $\alpha$ in $\HH^2(S_\fppf, \GGm)$,
we will sometimes abuse notation and write $\Qcoh_{\alpha}(S)$
even if $\alpha$ is just a cohomology class in $\HH^2(S_\fppf, \GGm)$.
\end{remark}

\begin{remark}
\label{rem-twisted-2-cocycle}
Twisted sheaves may also be described using 2-cocycles.
This approach was taken by C{\u{a}}ld{\u{a}}raru in \cite[Definition~1.2.1]{caldararu2000}.
That his definition is essentially equivalent to the definition above was proved by
Lieblich in~\cite[Proposition~2.1.3.11]{lieblich2004}. 
\end{remark}

We now give a precise formulation of the well-known fact that
any quasi-coherent sheaf on a suitable gerbe
decomposes into its homogeneous subsheaves (c.f.~\cite[Proposition~2.2.1.6]{lieblich2004}).

\begin{theorem}
  \label{t-qc-gerbe-split}
  Let $S$ be an algebraic stack and let $\alpha\colon X \to S$ be a
  gerbe 
  banded by a diagonalizable group $\Delta$.
  Then $\Qcoh_\chi(X)$ is
  a Serre subcategory (see~\cite[\sptag{02MO}]{stacks-project}) of $\Qcoh(X)$
  for any character $\chi$ of $\Delta_S$,
  and taking the coproduct gives an equivalence
  \begin{equation}
    \label{eq-eq-qc-gerbe}
    \prod_{\chi \in A}\Qcoh_\chi(X) \sira \Qcoh(X),
    \qquad
    (\mathcal{F}_\chi)_\chi \mapsto \bigoplus_{\chi \in A}
    \mathcal{F}_\chi, 
  \end{equation}
  of abelian categories, where $A$ denotes the character group of $\Delta$.
  
  Furthermore, assume that $\mathcal{F}_\chi$ and $\mathcal{G}_\psi$ are quasi-coherent
  $\mathcal{O}_X$-modules which are homogeneous for $\Delta_S$-characters $\chi$ and $\psi$, respectively.
  Then the quasi-coherent $\mathcal{O}_X$-modules
  \begin{equation}
    \label{eq-qc-monoidal}
    \mathcal{F}_\chi \otimes \mathcal{G}_\psi, \qquad
    \sheafHom(\mathcal{G}_\psi, \mathcal{F}_\chi)
  \end{equation}
  are $\chi\psi$- and $\chi\psi^{-1}$-homogeneous,
  respectively. 
\end{theorem}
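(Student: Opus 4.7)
The plan is to reduce everything to a local computation in the fppf topology, using that a $\Delta$-gerbe is fppf-locally trivial and that quasi-coherent sheaves on $\BB\Delta$ (for $\Delta$ diagonalizable) are simply graded modules. I would treat the monoidal statements first, since they are the most transparent. The inertial action on $\mathcal{F}\otimes\mathcal{G}$ is the tensor product of the individual actions, so for local sections $s\in\mathcal{F}_\chi$ and $t\in\mathcal{G}_\psi$ one computes $(s\otimes t)\cdot\gamma = \chi(\gamma)\psi(\gamma)(s\otimes t)$, giving the claim for $\otimes$. For $\sheafHom$, the inertial action is by conjugation, so a homomorphism $\varphi\colon\mathcal{G}_\psi\to\mathcal{F}_\chi$ satisfies $(\varphi\cdot\gamma)(t) = \varphi(t\cdot\gamma^{-1})\cdot\gamma = \chi(\gamma)\psi(\gamma)^{-1}\varphi(t)$, yielding the degree $\chi\psi^{-1}$.

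For the decomposition, since $\alpha\colon X\to S$ is a $\Delta$-gerbe it is fppf-locally isomorphic to $\BB\Delta_S$ (by definition of a gerbe, cf.~Remark~\ref{rem-gerbe-ff}). Choose an fppf cover $S'\to S$ trivializing it. Because $\Delta = \Spec\bZ[A]$ is diagonalizable, $\Qcoh(\BB\Delta_{S'})$ is canonically equivalent to the category of $A$-graded quasi-coherent $\mathcal{O}_{S'}$-modules, and under this equivalence the subcategory $\Qcoh_\chi$ corresponds to modules concentrated in degree $\chi$. Every $A$-graded module is the direct sum of its homogeneous components, so after pullback to $S'$ every object splits canonically as $\mathcal{F}|_{S'} = \bigoplus_{\chi\in A}(\mathcal{F}|_{S'})_\chi$, and the $\Qcoh_\chi$ are Serre subcategories with $\Hom(\mathcal{F}_\chi,\mathcal{G}_\psi)=0$ for $\chi\neq\psi$.

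To globalize, I would observe that the formation of $\mathcal{F}_\chi$, being defined on sections via the inertial action \eqref{eq-inertial-action} and the banding (Definition~\ref{def-banding}), is intrinsic and commutes with arbitrary pullback of stacks. In particular it commutes with pullback along $S'\to S$, so the local decompositions glue via fppf descent to a global decomposition $\mathcal{F}=\bigoplus_\chi\mathcal{F}_\chi$ on $X$. This gives essential surjectivity of the coproduct functor in \eqref{eq-eq-qc-gerbe}, while the Hom-vanishing between different homogeneous components, being an fppf-local statement, gives fully faithfulness. The Serre subcategory property for $\Qcoh_\chi(X)$ now follows from the decomposition: given a short exact sequence $0\to\mathcal{F}\to\mathcal{E}\to\mathcal{G}\to 0$ with $\mathcal{F},\mathcal{G}\in\Qcoh_\chi(X)$, the decomposition $\mathcal{E} = \bigoplus_\psi\mathcal{E}_\psi$ forces $\mathcal{E}_\psi = 0$ for $\psi\neq\chi$, since the components $\mathcal{E}_\psi$ must map trivially into both $\mathcal{F}$ and $\mathcal{G}$.

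The main obstacle I expect is making the fppf descent step genuinely rigorous: one has to check that the formation of $\mathcal{F}_\chi$ as defined by \eqref{eq:chi-subsheaf} is compatible with fppf pullback, which is a matter of tracing through how the inertial action and the banding behave under base change on $S$. Everything else then reduces cleanly to the elementary fact that a module over a ring, equipped with a grading by a finitely generated abelian group, splits as the coproduct of its homogeneous components.
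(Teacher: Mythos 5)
Your argument is correct, but it takes a genuinely different route from the paper, and it is worth noting the tradeoffs. You trivialize the gerbe on a single fppf cover $S' \to S$, identify $\Qcoh(\BB\Delta_{S'})$ with $A$-graded quasi-coherent $\mathcal{O}_{S'}$-modules, decompose there, and then descend; as you note, the only real content is checking that the formation of $\mathcal{F}_\chi$ from \eqref{eq:chi-subsheaf} commutes with the (flat) pullback along $X' := X\times_S S' \to X$, which holds because $\mathcal{F}_\chi$ is cut out by a finite-limit condition involving the banding, and flat pullback preserves this. The paper instead works with the site of \emph{all} affines $x \colon \Spec A \to X$: each such $x$ trivializes the pulled-back gerbe over $\Spec A$, so $\mathcal{F}(x)$ is a representation of $\Delta_{\Spec A}$ and decomposes by Jantzen I.2.11, and quasi-coherence makes these decompositions tautologically compatible along morphisms in the site. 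That version buys you two things: the gluing is absorbed into the sheaf condition rather than being a separate descent step, and the Serre-subcategory claim falls out immediately from the observation that $\mathcal{F}\mapsto\mathcal{F}_\chi$ is an \emph{exact} functor (since it is a direct summand of the identity after the decomposition), rather than from the slightly more laborious Hom-vanishing argument you sketch, which as written only treats the extension case and would need the analogous one-liner for subobjects and quotients. Your upfront computation of the degrees of $\mathcal{F}_\chi\otimes\mathcal{G}_\psi$ and $\sheafHom(\mathcal{G}_\psi,\mathcal{F}_\chi)$ directly from the inertial action is fine and matches what the paper states (the paper calls the tensor case ``obvious'' and derives the Hom case formally by adjunction).
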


\begin{proof}
Throughout the proof,
we work with sheaves in $X_\fppf$,
which we view as the topos of sheaves on the site of affine schemes over $X$.
Fix a quasi-coherent sheaf $\mathcal{F}$ of $\mathcal{O}_X$-modules.

  Let $x$ be an object of $X$ lying over an affine scheme $\Spec A$.
  Then $\mathcal{F}(x)$ is an $A$-module with an $A$-linear right action of the group 
  $\Delta(A):=\Delta(\Spec A)$ coming from the inertial action via the banding.
  Given a morphism $f \colon y \ra x$ in $X$ lying over a
  morphism $\Spec \phi \colon \Spec B \ra \Spec A$,
  the corresponding morphism $\mathcal{F}(f) \colon \mathcal{F}(x) \ra \mathcal{F}(y)$ 
  is equivariant with respect to $\Delta(A) \ra \Delta(B)$.
  Moreover, since $\mathcal{F}$ is quasi-coherent,
  it induces an isomorphism $B \otimes_{\phi, A} 
  \mathcal{F}(x) \sira \mathcal{F}(y)$ of $B$-modules.

  By restricting $\mathcal{F}$ to $\Spec A$ along $x\colon \Spec A \to X$ (see \cite[\sptag{06W9}]{stacks-project}),
  we see that the $A$-module $\mathcal{F}(x)$ can be viewed as
  a right representation of the diagonalizable $A$-group scheme
  $\Delta_{\Spec A}$ in the sense of \cite[I.2.7]{Jantzen}.
  Hence it decomposes as 
  $\mathcal{F}(x)=\bigoplus_{\chi \in A}\mathcal{F}(x)_\chi$ by 
  \cite[I.2.11]{Jantzen}.

  If the object $x$ varies, all these decompositions 
  are compatible and combine into the decomposition 
  $\mathcal{F}=\bigoplus_{\chi \in A} \mathcal{F}_\chi$ where
  $\mathcal{F}_\chi$ is the 
  $\chi$-homogeneous subsheaf of $\mathcal{F}$. 
  This shows that the functor \eqref{eq-eq-qc-gerbe} is
  essentially surjective.
  Similarly, we get a decomposition of morphisms between pairs of quasi-coherent modules
  (cf. \cite[I.2.11]{Jantzen}),
  which shows that \eqref{eq-eq-qc-gerbe} is fully faithful.
  Moreover, any $\Delta_S$-character $\chi$ clearly gives an exact functor $\mathcal{F} \mapsto \mathcal{F}_\chi$.
  In particular, $\Qcoh_\chi(X)$ is a Serre subcategory of $\Qcoh(X)$.

  The statement that the tensor product of a $\chi$-homogeneous
  quasi-coherent module with a $\psi$-homogeneous quasi-coherent
  module is $(\chi+\psi)$-homogeneous is obvious, and
  the corresponding claim for the internal Hom-functor is then a
  formal consequence.
\end{proof}

\begin{remark}
\label{rem-coprod-prod}
Note that the canonical morphism
$
\bigoplus_{\chi\in A} \mathcal{F}_\chi
\to
\prod_{\chi\in A} \mathcal{F}_\chi
$
is an isomorphism even if the character group
$A$ is not finite. Here the product is taken in $\Qcoh(X)$.
This is a simple consequence of 
the equivalence 
\eqref{eq-eq-qc-gerbe} in
Theorem~\ref{t-qc-gerbe-split}.
\end{remark}

\begin{proposition}
\label{p:pull-back-grading}
Let $S$ be an algebraic stack and 
$\alpha\colon X \to S$ a gerbe banded by a diagonalizable group
$\Delta$.
Let $f\colon T \to S$ be a morphism of algebraic stacks,
and let $g\colon Y \to X$ denote the base change of $f$ along
$\alpha$.
Then the functors
$$
g^*\colon \Qcoh(X) \to \Qcoh(Y), \qquad
g_*\colon \Qcoh(Y) \to \Qcoh(X)
$$
respect the decompositions of $\Qcoh(X)$ and~$\Qcoh(Y)$
into homogeneous objects given in Theorem~\ref{t-qc-gerbe-split}.
\end{proposition}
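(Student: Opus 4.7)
The plan is to reduce both statements to the compatibility of the inertial action under base change, and then to handle the push-forward by a formal adjunction argument.

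First, I would emphasize the following basic compatibility, which is the technical heart of the proof: since $\pi\colon Y \to T$ is obtained by base-changing the $\Delta$-gerbe $\alpha\colon X \to S$ along $f$, the canonical identification $I_{Y/T} \cong g^*I_{X/S}$ of groups in $Y_\fppf$ intertwines the banding isomorphisms on $Y$ and on $X$, once we also use the canonical identification $\Delta_T \cong f^*\Delta_S$. This is what allows the decompositions of $\Qcoh(X)$ and $\Qcoh(Y)$ given by Theorem~\ref{t-qc-gerbe-split} to be indexed by the same character group $A$ of $\Delta$.

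For $g^*$, I would work in the fppf topos, which is legitimate by Remark~\ref{rem:def-Qcoh-chi}. Unwinding Definition~\ref{def-homogeneous-sheaf}, an object $\mathcal{F} \in \Qcoh(X_\fppf,\mathcal{O}_X)$ is $\chi$-homogeneous precisely when the right $\Delta_X$-action obtained from the inertial action~\eqref{eq-inertial-action} composed with the banding agrees with left multiplication by $\chi \in \mathcal{O}_X^\times$. Since pull-back of sheaves of modules is symmetric monoidal and, by the previous paragraph, carries the inertial action on $\mathcal{F}$ to the inertial action on $g^*\mathcal{F}$, this equation pulls back verbatim to $Y$ with the same character $\chi \in A$. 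Hence $g^*\mathcal{F}$ is $\chi$-homogeneous, and $g^*$ respects the decomposition.

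For $g_*$, I would use the adjunction $g^* \dashv g_*$ of \eqref{eq-qc-push-forward} together with the $\Hom$-vanishing between distinct homogeneous degrees built into Theorem~\ref{t-qc-gerbe-split}. Given a $\psi$-homogeneous $\mathcal{G}$ on $Y$, decompose $g_*\mathcal{G} = \bigoplus_{\chi \in A}(g_*\mathcal{G})_\chi$ on $X$. For every $\chi \neq \psi$ and every $\chi$-homogeneous $\mathcal{F}_\chi$ on $X$, the map
\[
\Hom_X(\mathcal{F}_\chi, (g_*\mathcal{G})_\chi) \hookrightarrow \Hom_X(\mathcal{F}_\chi, g_*\mathcal{G}) \cong \Hom_Y(g^*\mathcal{F}_\chi, \mathcal{G})
\]
vanishes, since $g^*\mathcal{F}_\chi$ is $\chi$-homogeneous by the preceding paragraph and $\mathcal{G}$ is $\psi$-homogeneous. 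Taking $\mathcal{F}_\chi = (g_*\mathcal{G})_\chi$ forces $(g_*\mathcal{G})_\chi = 0$ for $\chi \neq \psi$. The case of a general $\mathcal{G} = \bigoplus_\psi \mathcal{G}_\psi$ then follows because $g_*$, being a right adjoint, commutes with the products in $\Qcoh(Y)$, which coincide with the coproducts by Remark~\ref{rem-coprod-prod}.

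The main obstacle is really the set-up step of matching the bandings under base change; once that bookkeeping is in place, the $g^*$ statement is immediate from the monoidality of pull-back, and the $g_*$ statement is a formal consequence of adjunction and Theorem~\ref{t-qc-gerbe-split}.
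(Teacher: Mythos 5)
Your proof is correct and takes essentially the same approach as the paper: verify the pull-back case directly in the fppf topos by unwinding the definition of a homogeneous sheaf (the paper phrases this as the statement being ``obvious from the definition'' once one observes that $g^*$ is given by restriction of topoi), and then deduce the push-forward case as a formal consequence of the adjunction $g^* \dashv g_*$. You simply make both steps explicit — in particular the $\Hom$-vanishing argument for $g_*$ and the observation matching the bandings under base change — whereas the paper leaves them as one-line remarks.
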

\begin{proof}
As a formal consequence of adjunction,
it is enough to verify that one of the functors $g^*$ and~$g_*$ preserves the decomposition.
The functor $g^*$ is obtained by restriction of the functor
$g^*\colon\Mod(S_\fppf, \mathcal{O}_S) \to \Mod(T_\fppf, \mathcal{O}_T)$,
which in turn is just given by restriction of the topos $S_\fppf$.
In this setting the statement is obvious from the definition of a
homogeneous sheaf
(see Definition~\ref{def-homogeneous-sheaf}).
\end{proof}

\begin{proposition}
  \label{t:equivalence-epi}
  Let $X$ and $X'$ be gerbes over $S$ banded by diagonalizable groups $\Delta$ and~$\Delta'$,
  respectively, and let $\rho\colon X \to X'$ be a morphism over~$S$ with induced morphism $\varphi\colon \Delta_S \to \Delta'_S$ on the bands.
  Fix a character $\chi'\colon \Delta'_S \to \GGms{S}$, and let $\chi =
  \chi' \circ \varphi$.
  Then the pull-back functor $\rho^*$ takes $\chi'$-homogeneous
  objects to $\chi$-homogeneous 
  objects and induces an equivalence
  \begin{equation}
    \label{eq-rho-restricted}
    \rho^*\colon 
    \Qcoh_{\chi'}(X') \sira \Qcoh_{\chi}(X)
  \end{equation}
  of categories.
  In particular, if $X' = S$ then $\rho^*\colon \Qcoh(S) \to \Qcoh(X)$ is fully faithful with 
  essential image consisting of those objects that are
  homogeneous for the trivial character. 
\end{proposition}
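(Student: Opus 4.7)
My plan is to split the argument into preservation of homogeneity (essentially formal) and the equivalence claim (which needs a local computation combined with descent).

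For the first assertion, I would unwind the inertial action. The morphism $\rho$ induces, by functoriality of the inertia stack, a group homomorphism $I_{X/S} \to \rho^{*}I_{X'/S}$, which via the bandings is identified with $\varphi \colon \Delta_{S} \to \Delta'_{S}$. For $\mathcal{F}' \in \Mod(X'_{\fppf}, \mathcal{O}_{X'})$, the inertial $\Delta_{X}$-action on $\rho^{*}\mathcal{F}'$ is obtained by pre-composing the pulled-back $\Delta'_{X}$-action with this map. If $\mathcal{F}'$ is $\chi'$-homogeneous, the composite is $\chi' \circ \varphi = \chi$, so $\rho^{*}\mathcal{F}'$ is $\chi$-homogeneous.

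For the equivalence in \eqref{eq-rho-restricted}, I would proceed by fppf descent on $S$. Being an equivalence is fppf-local: the homogeneous subcategories are compatible with base change by Proposition~\ref{p:pull-back-grading}, the restricted pull-back is natural in $S$, and $\rho$ is faithfully flat by Remark~\ref{rem-gerbe-ff}. Pass to an fppf cover $T \to S$ trivializing both $X$ and $X'$, so that $X_{T} \cong \BB \Delta_{T}$ and $X'_{T} \cong \BB \Delta'_{T}$; by Proposition~\ref{gerbe-functorial-group}, after a further refinement we may assume $\rho_{T}$ is the canonical morphism $\BB \Delta_{T} \to \BB \Delta'_{T}$ induced by $\varphi$. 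In this split situation Theorem~\ref{t-qc-gerbe-split} identifies $\Qcoh_{\chi}(\BB\Delta_{T})$ and $\Qcoh_{\chi'}(\BB\Delta'_{T})$ each with $\Qcoh(T)$ (by taking underlying sheaves), and the restricted pull-back becomes the identity on $\Qcoh(T)$ --- manifestly an equivalence. The ``in particular'' case with $X' = S$ forces $\Delta' = 1$ and hence $\chi = \chi' = \triv$, so it falls out as a direct specialization of the main statement and recovers the quasi-coherent part of Proposition~\ref{p-gerbe-pb-ff}.

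The main obstacle I anticipate is the descent step: carefully verifying that fully faithfulness and essential surjectivity of the restricted functor actually descend from the trivial local case. This relies on $\Qcoh$ forming an fppf-stack of categories on algebraic stacks and on the compatibility of the homogeneous decomposition with base change (Proposition~\ref{p:pull-back-grading}); once these are in hand, the argument is routine and the local calculation itself is purely formal.
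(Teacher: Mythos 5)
Your first step (preservation of homogeneity via the inertia map) matches the paper's. For the equivalence, however, you take a genuinely different route. The paper does not descend: it works globally on $S$, passing to the fiber product $X \times_S X'$, which it identifies as a $\Delta\times\Delta'$-gerbe over $S$ with projections $p, q$ that are themselves gerbes (banded by $\Delta'$ and $\Delta$, respectively, over $X$ and $X'$). Using Proposition~\ref{p-gerbe-pb-ff} and Theorem~\ref{t-qc-gerbe-split} it identifies $\Qcoh_{\chi}(X)$ and $\Qcoh_{\chi'}(X')$ with suitable homogeneous subcategories of $\Qcoh(X\times_S X')$, then exploits the section $\tau = (\id,\rho)$ of $p$ to produce a line bundle $\mathcal{L} \in \Qcoh_{(\chi^{-1},\chi')}(X\times_S X')$, and finally exhibits an explicit equivalence $p_*(\mathcal{L}^\vee\otimes q^*(-))$ which it shows agrees with $\rho^*$ on $\Qcoh_{\chi'}(X')$.

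Your approach instead trivializes $X$, $X'$, and (after a further refinement, to kill the relevant $\Delta'$-torsor obstruction) the morphism $\rho$ fppf-locally, reduces to the split case $\BB\Delta_T \to \BB\Delta'_T$ where the restricted pullback is manifestly the identity on underlying sheaves, and then invokes fppf descent for the stack of quasi-coherent sheaves together with Proposition~\ref{p:pull-back-grading} to conclude globally. This is sound: homogeneity is an fppf-local condition, the homogeneous subcategories form substacks of the $\Qcoh$-stack, and the restricted pullback is a cartesian morphism of these stacks, so fully faithfulness and essential surjectivity descend. The trade-offs are real, though. Your route is conceptually cleaner and the local calculation is trivial, but it leans on the machinery of $\Qcoh$ as an fppf stack of categories (a standard but nontrivial input the paper's proof avoids) and it yields no global formula for a quasi-inverse. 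The paper's fiber-product argument is more hands-on and self-contained, produces an explicit adjoint quasi-inverse $p_*(\mathcal{L}^\vee\otimes q^*(-))$, and derives the special case $X' = S$ directly from Proposition~\ref{p-gerbe-pb-ff} rather than as a degenerate instance. One small point worth making precise in your write-up: after trivializing $X_T$ and $X'_T$ separately, $\rho_T$ agrees with the canonical morphism only up to a $\Delta'_T$-torsor twist; Proposition~\ref{gerbe-functorial-group} tells you the two are isomorphic as objects under $X_T$, so you either refine further to split that torsor or, more economically, replace the chosen trivialization of $X'_T$ by the one coming from the equivalence of that proposition.
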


\begin{proof}
By a similar argument as in the proof of Proposition~\ref{p:pull-back-grading},
we see that $\rho^*$
takes $\chi'$-homogeneous objects to 
$\chi$-homogeneous objects.
First we note that the special case in the last
sentence in the statement of the proposition follows
directly from 
Proposition~\ref{p-gerbe-pb-ff}.

In the general case, we consider the fiber product $X\times_S X'$
and its projections $p$ and~$q$ to $X$ and~$X'$.
Note that $X\times_S X'$ is a $\Delta\times\Delta'$-gerbe over~$S$
and that $p$ and~$q$ induce the projections to $\Delta$ respectively~$\Delta'$
on the bands.

The projection $p$ is a gerbe banded by the kernel of the
projection $\Delta\times\Delta' \to \Delta$,
which is isomorphic to~$\Delta'$.
In particular, $p^*$ is fully faithful with essential image consisting of sheaves
on which this kernel acts trivially.
Similar statements hold for~$q$.
Hence it follows from the decomposition in Theorem~\ref{t-qc-gerbe-split}
that we get equivalences
\begin{align}
  \label{eq:equiv-p}
  p^* \colon \Qcoh_\chi(X)
  & \rightleftarrows \Qcoh_{(\chi,e')}(X \times_S X'/S)\colon p_*,\\
  \label{eq:equiv-q}
  q^* \colon \Qcoh_{\chi'}(X')
  & \rightleftarrows \Qcoh_{(e, \chi')}(X \times_S X'/S)\colon q_*,
\end{align}
where $e$ and~$e'$ denote the trivial characters of $\Delta$ and~$\Delta'$,
respectively.

Note that the gerbe $p$ has a trivialization~$\tau = (\id, \rho)$.
In particular, there exists a line bundle $\mathcal{L}$ on $X\times_S X'$ such
that
\begin{enumerate}
\item
\label{it-cond-1}
$\mathcal{L}$ is $\chi'$-homogeneous with respect to the decomposition induced by~$p$;
\item
\label{it-cond-2}
$\tau^*\mathcal{L} \cong \mathcal{O}_X$.
\end{enumerate}
Since $\mathcal{L}$ is a line bundle,
it must also be homogeneous for some character $(\psi, \psi')$ of $(\Delta\times\Delta')_S$
with respect to the gerbe $X\times_S X' \to S$.
By \ref{it-cond-1}, we have $\psi' = \chi'$.
Furthermore, since $\tau$ induces the morphism $(\id, \varphi)$ on bands,
we have $\tau^*\mathcal{L} \in \Qcoh_{\psi\chi}(X)$.
This forces $\psi = \chi^{-1}$ by \ref{it-cond-2},
so $\mathcal{L} \in \Qcoh_{(\chi^{-1}, \chi')}(X\times_S X'/S)$.
In particular, it follows from the equivalences \eqref{eq:equiv-p} and~\eqref{eq:equiv-q} and the statement about
the monoidal structure in Theorem~\ref{t-qc-gerbe-split} that we
get an equivalence
$$
p_*(\mathcal{L}^\vee\otimes q^*(-))\colon \Qcoh_{\chi'}(X') \sira \Qcoh_{\chi}(X).
$$
Furthermore, the restriction of the natural transformation 
\begin{multline*}
p_*(\mathcal{L}^\vee\otimes q^*(-)) \cong \tau^*p^*p_*(\mathcal{L}^\vee\otimes q^*(-)) \to\\
\tau^*(\mathcal{L}^\vee\otimes q^*(-)) \cong \tau^*(\mathcal{L}^\vee)\otimes \tau^*q^*(-) \cong \rho^*
\end{multline*}
to $\Qcoh_{\chi'}(X')$ is an isomorphism,
which concludes the proof.
\end{proof}

Now the following corollaries are easy consequences of Theorem~\ref{t-qc-gerbe-split},
Proposition~\ref{p:pull-back-grading} and~Proposition~\ref{t:equivalence-epi}.
\begin{corollary}
\label{cor:twisted-power}
Let $S$ be an algebraic stacks and let $\alpha\colon X \to S$ be a gerbe banded by $\Delta$,
and choose a character $\chi\colon \Delta_S \to \GGms{S}$.
Then we have an equivalence
\begin{equation}
\label{eq-pb-character}
\Qcoh_{\chi_*\alpha}(S)\cong \Qcoh_\chi(X) 
\end{equation}
of categories, where $\chi_*\alpha$ is defined as in Construction~\ref{cons-gerbe-pf}.
In the particular case when $\Delta = \GGm$,
this gives the equivalence
\begin{equation}
\label{eq-pb-power}
\Qcoh_{\alpha^d}(S)\cong \Qcoh_{d}(X) 
\end{equation}
for any $d \in \mathbb{Z}$, where $\alpha^d$ is defined as in Construction~\ref{cons-gerbe-operations}.
\end{corollary}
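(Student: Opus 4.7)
The plan is to unwind the definitions and realize that this corollary is essentially immediate from Proposition~\ref{t:equivalence-epi} combined with Construction~\ref{cons-gerbe-pf}. There is no substantive mathematical obstacle here; the only work is to match up notation carefully.

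First I would observe that, by Definition~\ref{def:twisted}, the category $\Qcoh_{\chi_*\alpha}(S)$ is by definition $\Qcoh_\id(\chi_* X)$, where $\id\colon \GGms{S} \to \GGms{S}$ is the identity character on the band $\GGm$ of the $\GGm$-gerbe $\chi_*\alpha\colon \chi_* X \to S$. Meanwhile, Construction~\ref{cons-gerbe-pf} furnishes a morphism of banded gerbes
\begin{equation*}
  \rho \colon X \to \chi_* X
\end{equation*}
over $S$ whose induced map on bands is precisely $\chi\colon \Delta_S \to \GGms{S}$.

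Next I would apply Proposition~\ref{t:equivalence-epi} to this morphism $\rho$, taking the character $\chi' = \id$ on the target band $\GGms{S}$. The composition $\chi' \circ \chi$ is then just $\chi$ itself, so the proposition produces an equivalence
\begin{equation*}
  \rho^* \colon \Qcoh_{\id}(\chi_* X) \sira \Qcoh_{\chi}(X),
\end{equation*}
which is exactly the desired equivalence $\Qcoh_{\chi_*\alpha}(S) \cong \Qcoh_{\chi}(X)$.

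Finally, for the specialization to $\Delta = \GGm$, I would use the standard identification of the character group $\Hom_{\Spec\bZ}(\GGm, \GGm)$ with $\bZ$, where the integer $d$ corresponds to the $d$-th power map $p_d$ from Construction~\ref{cons-gerbe-operations}. Under this identification, $\Qcoh_{p_d}(X)$ is precisely what is denoted $\Qcoh_d(X)$, and $(p_d)_* \alpha$ is by definition $\alpha^d$. Substituting $\chi = p_d$ into the equivalence just established yields $\Qcoh_{\alpha^d}(S) \cong \Qcoh_d(X)$. The only thing to be vigilant about is that this identification of $\Qcoh_{p_d}$ with $\Qcoh_d$ uses additive notation on the character side while Construction~\ref{cons-gerbe-operations} uses multiplicative notation on the gerbe side, but this is purely notational.
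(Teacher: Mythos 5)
Your proposal is correct and takes essentially the same approach as the paper: apply Proposition~\ref{t:equivalence-epi} to the morphism $\rho\colon X \to \chi_* X$ from Construction~\ref{cons-gerbe-pf}, and specialize to $\chi = p_d$ for the $\GGm$ case. You simply spell out the unwinding of notation in more detail than the paper does.
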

\begin{proof}
The equivalence \eqref{eq-pb-character} is the special case of Proposition~\ref{t:equivalence-epi} where
we take $\rho$ to be $X \to \chi_* X$ as in Construction~\ref{cons-gerbe-pf}.
The equivalence \eqref{eq-pb-power} is the special case of \eqref{eq-pb-character},
where we take $\chi = p_n$ as in Construction~\ref{cons-gerbe-operations}.
\end{proof}

\begin{corollary}
\label{cor:twisted-tensor}
Let $S$ be an algebraic stack and let $\alpha\colon X \to S$ and~$\beta\colon Y \to S$ be $\GGm$-gerbes.
Then the tensor product and the internal Hom-functor on $\Qcoh(X\times_S Y)$ induce functors
\begin{align*}
\otimes & \colon \Qcoh_{\alpha}(S) \times \Qcoh_{\beta}(S) \to \Qcoh_{\alpha\beta}(S), \\
\sheafHom & \colon \Qcoh_{\alpha}(S)^\opp \times \Qcoh_{\beta}(S)
\to \Qcoh_{\alpha^{-1}\beta}(S).
\end{align*}
Furthermore, if $f\colon T \to S$ is an arbitrary morphism of algebraic stacks,
then we get an induced pair of adjoint functors
\begin{equation*}
  f^* \colon \Qcoh_{\alpha}(S) \to \Qcoh_{f^*\alpha}(T),
  \qquad
  f_* \colon \Qcoh_{f^*\alpha}(T) \to \Qcoh_{\alpha}(S).
\end{equation*}
Recall our conventions regarding $f_*$ and $\sheafHom$ from Notation~\ref{not-sheaf-op}.
\end{corollary}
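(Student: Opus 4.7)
The plan is to realize all four operations as restrictions of corresponding operations on the product gerbe $X \times_S Y$, which by Example~\ref{ex:simple-gerbes}\,(b) carries a natural structure of $\GGm \times \GGm$-gerbe over~$S$ (characters of $\GGm \times \GGm$ will be written as pairs $(\chi_1, \chi_2)$, with $e$ denoting the trivial character). For the tensor product, observe that the projections $p_X\colon X\times_S Y \to X$ and $p_Y\colon X\times_S Y \to Y$ are morphisms of abelian gerbes over~$S$ inducing the two coordinate projections on bands. Applying Proposition~\ref{t:equivalence-epi} to $p_X$ with the identity character on $X$ yields an equivalence
\begin{equation*}
p_X^*\colon \Qcoh_\alpha(S) = \Qcoh_\id(X) \sira \Qcoh_{(\id,e)}(X\times_S Y),
\end{equation*}
and symmetrically $p_Y^*\colon \Qcoh_\beta(S) \sira \Qcoh_{(e,\id)}(X\times_S Y)$. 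By the monoidal compatibility in Theorem~\ref{t-qc-gerbe-split}, tensoring the two images lies in $\Qcoh_{(\id,\id)}(X\times_S Y)$. Since $(\id,\id)$ is the multiplication character $m\colon \GGm\times\GGm \to \GGm$ and $m_*(\alpha\times\beta) = \alpha\beta$ by Construction~\ref{cons-gerbe-operations}, Corollary~\ref{cor:twisted-power} identifies this category with $\Qcoh_{\alpha\beta}(S)$, producing the desired tensor product functor.

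The internal Hom case is analogous. By the corresponding statement in Theorem~\ref{t-qc-gerbe-split}, the sheaf Hom of a $(\id,e)$-homogeneous and a $(e,\id)$-homogeneous sheaf is $(\id^{-1},\id)$-homogeneous. Factoring the character $(x,y)\mapsto x^{-1}y$ as $m\circ (p_{-1}\times\id)$ and using Construction~\ref{cons-gerbe-operations}, its pushforward of $\alpha\times\beta$ equals $\alpha^{-1}\beta$, so Corollary~\ref{cor:twisted-power} yields the target $\Qcoh_{\alpha^{-1}\beta}(S)$.

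For pullback and pushforward along $f\colon T \to S$, let $g\colon X_T := X\times_S T \to X$ denote the base change of $f$ along $\alpha$. The structure morphism $X_T \to T$ inherits a $\GGm$-gerbe structure of class $f^*\alpha$. By Proposition~\ref{p:pull-back-grading}, both $g^*$ and $g_*$ preserve the homogeneous decomposition, so they restrict to an adjoint pair $\Qcoh_\id(X) \rightleftarrows \Qcoh_\id(X_T)$, which via Definition~\ref{def:twisted} is precisely the desired adjunction. The argument is throughout a straightforward assembly of earlier results; the only care required is the bookkeeping of bandings and their induced characters on the $\GGm \times \GGm$-gerbe $X\times_S Y$.
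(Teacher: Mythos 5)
Your proof is correct and follows essentially the same route as the paper: both pass to the $\GGm\times\GGm$-gerbe $X\times_S Y$, identify the homogeneous pieces via Proposition~\ref{t:equivalence-epi} applied to the projections and to the multiplication/inversion characters, and then invoke the monoidal compatibility of Theorem~\ref{t-qc-gerbe-split} and, for $f^*$, $f_*$, Proposition~\ref{p:pull-back-grading}. The only cosmetic difference is that you route the identification $\Qcoh_{(\id,\id)}(X\times_S Y) \cong \Qcoh_{\alpha\beta}(S)$ through Corollary~\ref{cor:twisted-power}, which is itself just a specialization of Proposition~\ref{t:equivalence-epi}.
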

\begin{proof}
The product $X\times_S Y$ has the structure of a $\GGm\times \GGm$-gerbe
such that the projections on the factors $X$ and $Y$ induce the projections on the factors
on the bands.
By applying Proposition~\ref{t:equivalence-epi} to the two projections $\GGm\times \GGm \to \GGm$,
we get equivalences
$$
\Qcoh_{\alpha}(S) \cong \Qcoh_{(1, 0)}(X\times_S Y), \qquad
\Qcoh_{\beta}(S) \cong \Qcoh_{(0, 1)}(X\times_S Y),
$$
where we have identified the character group of $\GGm$ with~$\mathbb{Z}$.
By applying Proposition~\ref{t:equivalence-epi} to the multiplication and inversion maps
given in Construction~\ref{cons-gerbe-operations},
we obtain the equivalences
$$
\Qcoh_{\alpha\beta}(S) \cong \Qcoh_{(1, 1)}(X\times_S Y), \qquad
\Qcoh_{\alpha^{-1}\beta}(S) \cong \Qcoh_{(-1, 1)}(X\times_S Y),
$$
respectively.
Now the functors $\otimes$ and $\sheafHom$ are obtained from the monoidal structure of $\Qcoh(X\times_S Y)$
described in Theorem~\ref{t-qc-gerbe-split}.
The statement about the functors $f^*$ and $f_*$ is an obvious consequence of Proposition~\ref{p:pull-back-grading}.
\end{proof}


\section{The derived category of a gerbe}
\label{sec:decomp-deriv-categ}
In this section we generalize some of the results by C{\u{a}}ld{\u{a}}raru
\cite[Chapter~2]{caldararu2000} and Lieblich \cite[Section~2.2.4]{lieblich2004}
on the basic structure of derived categories of gerbes and twisted sheaves.
Throughout the section,
we let $\alpha\colon X \to S$ be a gerbe banded by a diagonalizable group $\Delta$
with character group $A$.

\begin{definition}
\label{def:derived-twisted}
Given a character $\chi \colon \Delta_S \to \GGms{S}$,
we say that an object $\mathcal{F}$ in $\D_\qc(X)$ is
\define{$\chi$-homogeneous} if it has $\chi$-homogeneous
cohomology.
We denote the full subcategory of $\chi$-homogeneous objects in $\D_\qc(X)$ by~$\D_{\qc, \chi}(X)$.

Consider the special case $\Delta = \GGm$ and let $\id\colon \GGm \to \GGm$ be the identity character.
An object in $\D_{\qc, \id}(X)$ is called an \emph{$\alpha$-twisted complex on $S$}.
We often use the notation $\D_{\qc, \alpha}(S)$ instead of~$\D_{\qc, \id}(X)$
employing a similar abuse of notation as in Remark~\ref{rem-twisted-abuse}
if $\alpha$ is just a cohomology class in $\HH^2(S_\fppf, \GGm)$.
\end{definition}

We state the following lemma for convenient reference.
See Remark~\ref{rem-trivial-case} for sharper results.

\begin{lemma}
\label{lem:affine-derived-decomp}
Let $S$ be an affine scheme and let $\alpha\colon X \to S$ be a trivial $\Delta$-gerbe,
where  $\Delta$ is a diagonalizable group.
Then the obvious functor
\begin{equation}
\label{eq-equiv-qcoh}
\D(\Qcoh(X)) \xrightarrow{\sim} \D_\qc(X)
\end{equation}
is a monoidal equivalence and induces equivalences
\begin{equation}
\label{eq-equiv-qcoh-chi}
\D(\Qcoh_\chi(X)) \xrightarrow{\sim} \D_{\qc, \chi}(X)
\end{equation}
for each character $\chi$ of $\Delta$.
Moreover, let $T$ be an affine scheme and let $\beta\colon X' \to T$ be a trivial $\Delta'$-gerbe,
where $\Delta'$ is a diagonalizable group.
Assume that we have a morphism $\rho\colon X \to X'$ of
algebraic stacks.
Then the squares
\begin{equation}
\label{eq-commute-derived}
\xymatrix{
\D(\Qcoh(X')) \ar[r]^-{\sim}\ar[d]_{\Ld\rho^*} & \D_\qc(X') \ar[d]^{\rho^*}\\
\D(\Qcoh(X)) \ar[r]^-{\sim}& \D_\qc(X), \\
}\qquad
\xymatrix{
\D(\Qcoh(X')) \ar[r]^-{\sim} & \D_\qc(X') \\
\D(\Qcoh(X)) \ar[r]^-{\sim}\ar[u]^{\Rd\rho_*}& \D_\qc(X)\ar[u]_{\rho_*} \\
}
\end{equation}
commute up to natural equivalence.
\end{lemma}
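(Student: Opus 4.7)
By the trivial gerbe hypothesis, a choice of section identifies $X \cong \BB\Delta_S$ with $S = \Spec R$. Combining Theorem~\ref{t-qc-gerbe-split} with Proposition~\ref{t:equivalence-epi} (applied to the character line bundles $L_\chi$) yields a monoidal equivalence
\begin{equation*}
\Qcoh(X) \sira \prod_{\chi\in A}\Qcoh(S),
\end{equation*}
with tensor product on the right given by convolution along the abelian group $A$. Since the derived category functor commutes with products of abelian categories, this upgrades to a monoidal equivalence $\D(\Qcoh(X)) \simeq \prod_{\chi\in A}\D(\Qcoh(S))$.

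The main step is to identify $\D_\qc(X)$ with the same product compatibly with the natural comparison functor. I would argue as follows. Each line bundle $L_\chi$ is a compact object of $\D_\qc(X)$, and the collection $\{L_\chi\}_{\chi\in A}$ generates $\D_\qc(X)$ since it generates $\Qcoh(X)$. Moreover, using that $\alpha\colon X \to S$ is cohomologically affine (because $\Delta$ is linearly reductive) and that $\alpha_*L_\chi \cong \mathcal{O}_S$ when $\chi$ is trivial and vanishes otherwise, one computes
\begin{equation*}
\Hom_{\D_\qc(X)}(L_\chi, L_\psi[n]) \cong \HH^n(S, \alpha_*L_{\chi^{-1}\psi}),
\end{equation*}
which equals $R$ when $(\chi, n) = (\psi, 0)$ and vanishes otherwise. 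This matches the corresponding Ext groups in $\D(\Qcoh(X))$, so the natural comparison functor is fully faithful on a set of compact generators and hence an equivalence. Monoidality is transported since both tensor operations become $A$-convolution under the identification, and the equivalences \eqref{eq-equiv-qcoh-chi} emerge as the $\chi$-components of the global decomposition.

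For the commutativity of the diagrams \eqref{eq-commute-derived}: Proposition~\ref{p:pull-back-grading} shows that $\rho^*$ and $\rho_*$ respect the homogeneous decompositions, so under the identifications above they become componentwise pullback and pushforward between categories of modules over the affine base rings $R$ and $R'$, where the comparison between the $\D_\qc$-versions and their derived counterparts on $\D(\Qcoh)$ is standard. The main obstacle in the plan is the cohomology computation in the second paragraph — specifically, verifying $\HH^{>0}(\BB\Delta_S, L_\chi) = 0$ for a diagonalizable $\Delta$ over an affine base — which ultimately rests on the linear reductivity of $\Delta$.
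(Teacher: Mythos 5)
Your proposal takes a genuinely different route from the paper for the equivalence \eqref{eq-equiv-qcoh}: the paper simply cites Lieblich's~\cite[Proposition~2.2.4.6]{lieblich2004}, whereas you propose a compact-generation argument. That alternative is reasonable in spirit, but the two places where the paper does real technical work are precisely where your plan is thin.

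\textbf{Monoidality.} You assert that monoidality is ``transported since both tensor operations become $A$-convolution under the identification.'' The issue is that the tensor product on $\D_\qc(X)$ is \emph{by definition} the restriction of the derived tensor product on $\D(X)=\D(\Mod(X_\liset,\mathcal{O}_X))$, computed via h-flat resolutions in the ambient module category, while the tensor product on $\D(\Qcoh(X))$ is computed via h-flat resolutions \emph{inside} $\Qcoh(X)$. There is no a priori reason these should agree, and verifying that they do is exactly the content of the paper's argument: one must produce resolutions that are simultaneously h-flat in $\Qcoh(X)$ and in $\Mod(X_\liset,\mathcal{O}_X)$. Moreover, deducing that the $\D_\qc$-side ``becomes convolution'' is essentially the monoidal statement of Theorem~\ref{t:chi-splitting-D-qc-X}, whose proof invokes this very lemma, so the argument as phrased risks circularity. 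The paper avoids all of this by a concrete resolution construction using homotopy colimits of quasi-coherent locally free quotients.

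\textbf{Commutativity of \eqref{eq-commute-derived}.} Here $\rho\colon X\to X'$ is an arbitrary morphism of algebraic stacks between gerbes over \emph{different} affine bases $S$ and $T$, possibly with different bands $\Delta$ and $\Delta'$. Proposition~\ref{p:pull-back-grading} does not apply to such a $\rho$: it is stated for a base change $g\colon Y\to X$ of a morphism of bases along $\alpha$, which is a much more restricted situation. So the assertion that $\rho^*$ and $\rho_*$ ``respect the homogeneous decompositions'' and become ``componentwise'' operations is not justified. More importantly, you do not address the key comparison: why does the restriction $\rho_*\colon\D_\qc(X)\to\D_\qc(X')$ (defined as adjoint of $\rho^*$ on $\D_\qc$) agree with the genuine derived pushforward on $\D(\Qcoh)$? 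The paper establishes this by factoring $\rho$ through an affine morphism and a gerbe, showing $\rho$ has cohomological dimension zero and is concentrated, and then invoking the comparison result of Hall--Neeman--Rydh. That concentration argument is the actual engine of the commutativity and is absent from your plan; the cohomology vanishing $\HH^{>0}(\BB\Delta_S,L_\chi)=0$ that you flag as the ``main obstacle'' is the easy part.
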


\begin{proof}
The equivalence \eqref{eq-equiv-qcoh} is a special case of \cite[Proposition~2.2.4.6]{lieblich2004},
and the equivalences \eqref{eq-equiv-qcoh-chi} are then immediate
from Theorem~\ref{t-qc-gerbe-split}.
The fact that the equivalence \eqref{eq-equiv-qcoh} is monoidal follows from the fact that
any complex in $\Qcoh(X)$ admits a resolution 
by an h-flat complex in $\Qcoh(X)$ which is also h-flat as a
complex in $\Mod(X_\liset, \mathcal{O}_X)$.
This can be seen by using a homotopy limit argument (see~\cite[\sptag{06XX}]{stacks-project}),
using that any object in $\Qcoh(X)$ is a quotient of a locally free sheaf and the fact that coproducts in $\Mod(X_\liset, \mathcal{O}_X)$ are exact and preserve $\Qcoh(X)$.

The functor $\alpha_*\colon \Qcoh(X) \to \Qcoh(S)$ is exact,
since this operation corresponds to taking invariants and $\Delta$ is diagonalizable.
This well-known fact also easily follows from the last sentence of Proposition~\ref{t:equivalence-epi}
together with the decomposition in~Theorem~\ref{t-qc-gerbe-split}.
Hence the equivalence \eqref{eq-equiv-qcoh} shows that $X$ has cohomological dimension zero.

Note that $\rho$ induces a morphism $S \to T$ on the coarse spaces,
so $\rho$ factors as $X \to X'\times_T S \to X'$ where the first morphism
is a morphism of gerbes over $S$, and the second morphism is affine.
Furthermore, the first morphism factors as an affine morphism followed by a
$\Delta$-gerbe by Remark~\ref{rem-gerbe-ff}.
Hence also $\rho$ has cohomological dimension zero,
as this property is stable under composition and can be checked after a faithfully flat base change
by \cite[Lemma~2.2]{hr2017}).
In particular, $\rho$ is concentrated, so it follows from \cite[Corollary~2.2]{hnr2018} that the right hand
square \eqref{eq-commute-derived} commutes.
Hence also the left hand square commutes by adjunction.
\end{proof}

\begin{remark}
\label{rem-gerbe-concentrated}
Let $\rho\colon X \to Y$ be a morphism of gerbes banded by diagonalizable groups
over an algebraic stack $S$ as in Remark~\ref{rem-gerbe-ff}.
Then the last part of the proof shows that $\rho$ is concentrated of cohomological dimension zero,
as these properties can be checked locally on~$S$
(see \cite[Lemma~2.2 and Lemma~2.5]{hr2017}).
\end{remark}

The next theorem is the main structure theorem for the derived
category of a $\Delta$-gerbe and the derived analog of Theorem~\ref{t-qc-gerbe-split}.
It is a generalization of the observation in \cite[Section~2.2.4]{lieblich2004}
that the derived category splits according to characters in certain situations.
\begin{theorem}
  \label{t:chi-splitting-D-qc-X}
  Let $S$ be an algebraic stack and 
  $\alpha\colon X \to S$ a gerbe banded by a diagonalizable group
  $\Delta$.
  Then $\D_{\qc, \chi}(X)$ is
  a triangulated subcategory of $\D_{\qc}(X)$ for each $\Delta_S$-character $\chi$,
  and taking the coproduct gives an equivalence
\begin{equation}
\label{eq-equivalence-derived}
\prod_{\chi \in A} \D_{\qc, \chi}(X) \sira \D_\qc(X),
\qquad
 (\mathcal{F}_\chi)_{\chi \in A} 
\mapsto 
\bigoplus_{\chi \in A} \mathcal{F}_\chi,
\end{equation}
of triangulated categories, where $A$ denotes the character group of $\Delta$.

Furthermore, assume that $\mathcal{F}_\chi$ and $\mathcal{G}_\psi$ are objects in $\D_\qc(X)$
which are homogeneous for $\Delta_S$-characters $\chi$ and $\psi$, respectively.
Then the objects
\begin{equation}
\label{eq-der-monoidal}
\mathcal{F}_\chi \otimes \mathcal{G}_\psi, \qquad
\sheafHom(\mathcal{G}_\psi, \mathcal{F}_\chi)
\end{equation}
are $\chi\psi$- and $\chi\psi^{-1}$-homogeneous, respectively.
\end{theorem}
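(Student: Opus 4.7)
The plan is to apply Theorem~\ref{t:sod-DBA} for each character separately, using the underived decomposition of Theorem~\ref{t-qc-gerbe-split}, and then to assemble the resulting semi-orthogonal decompositions into the product decomposition of the statement. That $\D_{\qc,\chi}(X)$ is a strictly full triangulated subcategory of $\D_\qc(X)$ is immediate from $\Qcoh_\chi(X)$ being a Serre subcategory of $\Qcoh(X)$. For each fixed $\chi \in A$, set $\mathcal{T} = \Qcoh_\chi(X)$ and $\mathcal{F} = \bigoplus_{\psi \neq \chi}\Qcoh_\psi(X)$; both are abelian subcategories of $\mathcal{B} := \Qcoh(X)$, and the decomposition of Theorem~\ref{t-qc-gerbe-split} exhibits $(\mathcal{T}, \mathcal{F})$ as a torsion pair in $\mathcal{B}$. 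The effectiveness hypotheses of Theorem~\ref{t:sod-DBA} with $\mathcal{A} := \Mod(X_\liset, \mathcal{O}_X)$ are furnished by Proposition~\ref{p:Dqc-truncation-complete} (inverse truncations with quasi-coherent terms) and Example~\ref{ex:ringed-topos-effective} (direct truncations).

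The main obstacle is verifying the remaining Ext-vanishing hypothesis: $\Ext^n_{\mathcal{A}}(T, F) = 0$ for all $n$ whenever $T \in \Qcoh_\chi(X)$ and $F \in \Qcoh_\psi(X)$ with $\chi \neq \psi$. I would establish this by smooth descent. Pick a smooth surjection $S_0 \to S$ from an affine scheme that trivializes $\alpha$, so that $X_{S_0} := X \times_S S_0 \cong \BB\Delta_{S_0}$. On this chart, Lemma~\ref{lem:affine-derived-decomp} identifies $\D(\Qcoh(X_{S_0}))$ with $\D_\qc(X_{S_0})$, while Theorem~\ref{t-qc-gerbe-split} presents $\Qcoh(X_{S_0})$ as the product of its homogeneous components; hence Ext between objects of distinct homogeneous degrees vanishes locally. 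To globalize, one checks that the cohomology sheaves of the internal hom $\sheafHom(T, F) \in \D(X)$ can be computed after pullback along the smooth cover and thus vanish, whereupon the local-to-global spectral sequence delivers the vanishing of $\Ext^n$.

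With the Ext-vanishing established, Theorem~\ref{t:sod-DBA} produces for each $\chi$ a semi-orthogonal decomposition of $\D_\qc(X)$ with right-admissible factor $\D_{\qc,\chi}(X)$; let $\pi_\chi$ denote the associated projection. Tracking cohomology through the proof of that theorem yields $\HH^n(\pi_\chi F) \cong \HH^n(F)_\chi$ for every $F \in \D_\qc(X)$. Essential surjectivity of \eqref{eq-equivalence-derived} then follows because the natural morphism $\bigoplus_\chi \pi_\chi(F) \to F$ is an isomorphism on each cohomology sheaf by Theorem~\ref{t-qc-gerbe-split}. For full faithfulness, the coproduct universal property gives $\Hom(\bigoplus_\chi F_\chi, G) = \prod_\chi \Hom(F_\chi, G)$, and the right admissibility of $\D_{\qc,\chi}(X)$ lets us replace $G = \bigoplus_\psi G_\psi$ by $\pi_\chi(G) = G_\chi$ inside the hom, yielding $\Hom(F_\chi, \bigoplus_\psi G_\psi) = \Hom(F_\chi, G_\chi)$. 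Finally, the monoidal statements for $\otimes$ and $\sheafHom$ follow from their underived analogs in Theorem~\ref{t-qc-gerbe-split} via a spectral sequence argument tracking homogeneous degrees on cohomology sheaves.
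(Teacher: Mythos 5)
Your overall strategy matches the paper's: reduce to Theorem~\ref{t:sod-DBA} applied to the torsion pair on $\Qcoh(X)$ coming from the character decomposition, with effectiveness supplied by Propositions~\ref{p:ab4-trunc} and~\ref{p:Dqc-truncation-complete}, and then assemble the semi-orthogonal decompositions for singletons $E=\{\chi\}$ into the product equivalence exactly as the paper does. The full-faithfulness, essential surjectivity and monoidal claims are handled in essentially the same way.

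However, there is a genuine gap in your Ext-vanishing step. You assert that ``the cohomology sheaves of the internal hom $\sheafHom(T, F) \in \D(X)$ can be computed after pullback along the smooth cover and thus vanish.'' Both halves of this claim are problematic. First, for $T \in \Qcoh_\chi(X)$ and $F \in \Qcoh_\psi(X)$ with $\chi \neq \psi$, the derived sheaf hom $\Rd\sheafHom_{\mathcal{O}_X}(T,F)$ in $\D(X)=\D(\Mod(X_\liset,\mathcal{O}_X))$ does \emph{not} vanish: restricting to an affine scheme $U$ smooth over $X$ (not over $S$!) simply yields $\sheafHom_{\mathcal{O}_U}(T|_U,F|_U)$, where the homogeneity data has been forgotten, and there is no reason for this to be zero. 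Second, $\Rd\sheafHom$ on a ringed topos does not commute with flat base change without finite presentation hypotheses (cf.\ Remark~\ref{rem-qc-hom} and Remark~\ref{rem-dqc-hom}), so the ``computed after pullback'' claim is not justified. Because of this, the local-to-global spectral sequence does not yield the desired vanishing from your local observation.

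The paper's argument avoids this trap by pushing forward to $S$ first: it writes
\[
\Ext^n_{\mathcal{O}_X}(\mathcal{E},\mathcal{F})
\cong \HH^n\bigl(\Rd\Gamma\,\Rd\alpha_*\,\Rd\sheafHom_{\mathcal{O}_X}(\mathcal{E},\mathcal{F})\bigr)
\]
and shows that $\Rd\alpha_*\Rd\sheafHom_{\mathcal{O}_X}(\mathcal{E},\mathcal{F})$ vanishes. The point is that the cohomology sheaves of this \emph{pushforward} are the sheafifications of the presheaves $V \mapsto \Ext^n_{\mathcal{O}_{X_V}}(\mathcal{E}|_{X_V},\mathcal{F}|_{X_V})$ for $V$ affine smooth over $S$. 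Over such $V$, the gerbe $X_V \to V$ is (after shrinking) trivial, so Lemma~\ref{lem:affine-derived-decomp} identifies $\D(\Qcoh(X_V)) \sira \D_\qc(X_V)$ and the Ext groups vanish by the underived splitting of Theorem~\ref{t-qc-gerbe-split}. This works precisely because one evaluates on affines smooth over $S$ rather than over $X$, so the gerbe structure is still visible. You should replace your globalization step by this pushforward argument; the rest of your proof then goes through.
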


\begin{proof}
By Theorem \ref{t-qc-gerbe-split},
the category $\Qcoh_\chi(X)$ is a Serre subcategory of $\Qcoh(X)$.
It follows that $\D_{\qc, \chi}(X)$ is a strictly full triangulated subcategory of $\D_\qc(X)$
by \cite[\sptag{07B4}, \sptag{06UQ}]{stacks-project}.

Recall that we work with sheaves on the lisse--étale sites over $X$ and~$S$.
Note that we get a morphism $X_{\liset} \to S_{\liset}$ of topoi,
by 
\cite[\sptag{07AT}]{stacks-project},
since $\alpha$ is smooth (see Remark~\ref{rem-gerbe-ff}).
In particular, there is no need to work with hypercoverings to define pull-backs as in~\cite{olsson2007}.

Let $E$ be a subset of the set $A$ of characters of $\Delta$,
and let $\Qcoh_E(X)$ denote the full subcategory of $\Qcoh(X)$ of objects
which are direct sums of $\chi$-homogeneous objects for $\chi \in E$.
Let $F$ be the complement of $E$ in $A$.
It follows from Theorem~\ref{t-qc-gerbe-split}
that $(\Qcoh_E(X), \Qcoh_F(X))$ is a torsion pair in $\Qcoh(X)$ and that both
$\Qcoh_E(X)$ and~$\Qcoh_F(X)$ are abelian subcategories of~$\Qcoh(X)$.

Let $\mathcal{E}$ and $\mathcal{F}$ be objects of $\Qcoh_E(X)$ and~$\Qcoh_F(X)$,
respectively.
We will show that
\begin{equation}
\label{eq-ext-vanishing}
\Ext^n_{\mathcal{O}_X}(\mathcal{E}, \mathcal{F})
\cong \HH^n(\Rd\Gamma\Rd\sheafHom_{\mathcal{O}_X}(\mathcal{E}, \mathcal{F}))
\end{equation}
vanishes for each $n$.
Since
$
\Rd\Gamma\Rd\sheafHom_{\mathcal{O}_X}(\mathcal{E}, \mathcal{F})
\cong
\Rd\Gamma\Rd\alpha_\ast\Rd\sheafHom_{\mathcal{O}_X}(\mathcal{E}, \mathcal{F}),
$
this follows if we prove that $\Rd\alpha_\ast\Rd\sheafHom_{\mathcal{O}_X}(\mathcal{E}, \mathcal{F})$ vanishes.
The $n$-th cohomology sheaf of $\Rd\alpha_\ast\Rd\sheafHom_{\mathcal{O}_X}(\mathcal{E}, \mathcal{F})$ is easily seen
to be the sheaf of $\mathcal{O}_S$-modules associated to the presheaf
\begin{equation*}
(U \to S) \mapsto
\Ext^n_{\mathcal{O}_{X_U}}(\mathcal{E}|_{X_U}, 
\mathcal{F}|_{X_U})
\end{equation*}
where $U$ is a scheme and $U \ra S$ is smooth. 
It is therefore enough to verify the vanishing of \eqref{eq-ext-vanishing} when $S$ is affine and the gerbe
$\alpha\colon X \to S$ is trivial.
But in this situation the functor \eqref{eq-equiv-qcoh} is an equivalence by Lemma~\ref{lem:affine-derived-decomp},
so we get the desired vanishing from
Lemma~\ref{p:abelian-torsion-pair}.\ref{enum:Ext-TF} or, more
easily, from
Theorem~\ref{t-qc-gerbe-split}.

All direct and inverse truncation systems in $\D_\qc(X)$ are
effective by Propositions~\ref{p:ab4-trunc} and \ref{p:Dqc-truncation-complete}.
Hence the hypotheses of Theorem~\ref{t:sod-DBA} are satisfied,
so we get a semi-orthogonal decomposition
\begin{equation}
\label{eq-so-decomp}
\D_\qc(X) = \langle \D_{\qc, F}(X), \D_{\qc, E}(X)\rangle,
\end{equation}
where $\D_{\qc, E}$ denotes the full subcategory of $\D_\qc(X)$ of objects with cohomology in
$\Qcoh_E(X)$ and similarly for $\D_{\qc, F}(X)$.

We are now ready to prove that the functor \eqref{eq-equivalence-derived} is an equivalence.
First note that it is well defined since $\D_\qc(X)$ has arbitrary coproducts.
Let $\chi \in A$ and set
$E = \{\chi\}$, $F = A -\{\chi\}$.
Then $\Hom_{\D_\qc(X)}(\mathcal{F}_\chi, \mathcal{G})$ vanishes
whenever $\mathcal{F}_\chi \in \D_{\qc, \chi}(X)$
and $\mathcal{G} \in \D_{\qc, F}$ by the 
semi-orthogonal decomposition~\eqref{eq-so-decomp}.
Since $\D_{\qc, \chi}(X)$ is a full subcategory of $\D_\qc(X)$,
it follows that the functor \eqref{eq-equivalence-derived} is fully faithful.

Now let $\mathcal{F}$ be any object of $\D_\qc(X)$.
Then the semi-orthogonal decomposition~\eqref{eq-so-decomp}
gives a triangle
$$
\mathcal{F}_\chi \to \mathcal{F} \to \mathcal{F}_F \to \Sigma\mathcal{F}_\chi
$$
with $\mathcal{F}_\chi \in \D_{\qc, \chi}(X)$ and $\mathcal{F}_F
\in \D_{\qc, F}(X)$, respectively.
By Remark~\ref{rem:cohomology-split} 
and Theorem~\ref{t-qc-gerbe-split},
the morphism $\mathcal{F}_\chi \to \mathcal{F}$ induces an isomorphism
$\HH^n(\mathcal{F}_\chi) \to \HH^n(\mathcal{F})_\chi$ for each $n$.
It follows that the canonical map $\bigoplus_{\chi \in A} \mathcal{F}_\chi \to \mathcal{F}$ induces
an isomorphism on cohomology and therefore is an isomorphism in $\D_\qc(X)$.
Hence the functor \eqref{eq-equivalence-derived} is essentially
surjective and therefore an equivalence. 

Finally, we prove the statements about the monoidal structure.
It is enough to prove the statement about the tensor product,
since the statement about the internal hom follows formally from the
adjointness property.
The question whether $\mathcal{F}_\chi \otimes_\mathcal{O_X}^\Ld \mathcal{G}_\psi$
is $\chi\psi$-homogeneous can be verified locally on $S$.
Indeed, this follows from Proposition~\ref{p:pull-back-grading},
since cohomology of a complex commutes with pull-back along a flat map (see \cite[Equation~1.9]{hr2017}),
and the pull-back functor along a faithfully flat map is conservative.
Hence we may assume that $S$ is affine and that the gerbe $\alpha\colon X \to S$
is trivial.
Now the statement follows from the underived case described in Theorem~\ref{t-qc-gerbe-split}
by the monoidal decomposition preserving
equivalence \eqref{eq-equiv-qcoh} in Lemma~\ref{lem:affine-derived-decomp}.
\end{proof}

\begin{remark}
Similarly as in Remark~\ref{rem-coprod-prod}, the canonical morphism
$
\bigoplus_{\chi\in A} \mathcal{F}_\chi
\to
\prod_{\chi\in A} \mathcal{F}_\chi
$
is an isomorphism even if the character group
$A$ is not finite.
\end{remark}

The following two propositions are the derived analogs of
Proposition~\ref{p:pull-back-grading} and \ref{t:equivalence-epi}.

\begin{proposition}
\label{p:pull-back-grading-derived}
Let $S$ be an algebraic stack and 
$\alpha\colon X \to S$ a gerbe banded by a diagonalizable group
$\Delta$.
Let $f\colon T \to S$ be a morphism of algebraic stacks,
and let $g\colon Y \to X$ denote the canonical morphism from the pull-back
$Y$ of $X$ along $f$.
Then the functors
$$
g^*\colon \D_\qc(X) \to \D_\qc(Y), \qquad
g_*\colon \D_\qc(Y) \to \D_\qc(X)
$$
respect the decompositions of $\D_\qc(X)$ and~$\D_\qc(Y)$
into homogeneous components given in Theorem~\ref{t:chi-splitting-D-qc-X}.
\end{proposition}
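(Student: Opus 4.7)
The plan is to mimic the end of the proof of Theorem~\ref{t:chi-splitting-D-qc-X}: reduce via faithfully flat base change to the case of trivial gerbes over affine bases, and then invoke Lemma~\ref{lem:affine-derived-decomp} together with the underived Proposition~\ref{p:pull-back-grading}.

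First, by adjunction between $g^*$ and $g_*$, one of them preserves the decomposition if and only if the other does: if $g^*(\D_{\qc,\chi}(X)) \subseteq \D_{\qc,\chi}(Y)$ for each $\chi$, then for $\psi \neq \chi$ and $\mathcal{H}_\psi \in \D_{\qc,\psi}(Y)$ we have
\[
\Hom_{\D_\qc(X)}(\mathcal{F}_\chi, g_*\mathcal{H}_\psi) = \Hom_{\D_\qc(Y)}(g^*\mathcal{F}_\chi, \mathcal{H}_\psi) = 0
\]
for every $\mathcal{F}_\chi \in \D_{\qc,\chi}(X)$, whence the $\chi$-component of $g_*\mathcal{H}_\psi$ vanishes; the reverse implication is symmetric. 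It therefore suffices to show that $\HH^n(g^*\mathcal{F}_\chi)$ is $\chi$-homogeneous on $Y$ for every $\mathcal{F}_\chi \in \D_{\qc,\chi}(X)$ and every integer~$n$.

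To reduce to the trivial affine case, I would pick an fppf affine cover $S' \to S$ trivializing $\alpha$ together with a smooth affine cover $T'' \to T \times_S S'$; set $X' := X \times_S S'$ and $Y'' := Y \times_T T'' \cong X' \times_{S'} T''$, so that both gerbes $X' \to S'$ and $Y'' \to T''$ are trivial. The resulting cartesian square
\[
\xymatrix{
Y'' \ar[r]^{g''} \ar[d] & X' \ar[d] \\
Y \ar[r]^{g} & X
}
\]
has fppf vertical arrows. Since $\chi$-homogeneity is an fppf-local property by Definition~\ref{def-homogeneous-sheaf} and Remark~\ref{rem:def-Qcoh-chi}, and since formation of cohomology sheaves commutes with flat pullback by \cite[Eq.~1.9]{hr2017}, the $\chi$-homogeneity of $\HH^n(g^*\mathcal{F}_\chi)$ can be detected after pullback along the left vertical arrow; symmetrically, the pullback of $\mathcal{F}_\chi$ along the flat arrow $X' \to X$ remains $\chi$-homogeneous by Proposition~\ref{p:pull-back-grading}. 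The problem is thereby reduced to the analogous statement for the morphism $g''$.

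In this reduced setting, Lemma~\ref{lem:affine-derived-decomp} supplies monoidal equivalences $\D(\Qcoh(X')) \sira \D_\qc(X')$ and $\D(\Qcoh(Y'')) \sira \D_\qc(Y'')$ intertwining $(g'')^*$ with the derived functor $\Ld(g'')^*$ on $\D(\Qcoh)$. By Theorem~\ref{t-qc-gerbe-split}, the summand projector $\pi_\chi$ on $\Qcoh(X')$ is exact, and applying it termwise to an ordinary flat resolution of a $\chi$-homogeneous object yields a flat resolution inside $\Qcoh_\chi(X')$. Applying the underived pullback $(g'')^*$ to such a resolution produces, by Proposition~\ref{p:pull-back-grading}, a complex in $\Qcoh_\chi(Y'')$, which gives the desired $\chi$-homogeneity of $\Ld(g'')^*\mathcal{F}'_\chi$. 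The main obstacle, I expect, is the bookkeeping of the reduction step — in particular the point that trivializing $\alpha$ requires the full fppf topology because a diagonalizable $\Delta$ need not be smooth, so the descent must be carried out fppf on both sides of the cartesian square in parallel.
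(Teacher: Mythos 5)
Your proposal follows the paper's proof closely: reduce to $g^*$ by adjunction, reduce to the trivial gerbe over an affine base via flat pull-back and descent (exactly the mechanism used at the end of the proof of Theorem~\ref{t:chi-splitting-D-qc-X}), and then combine the compatibility square \eqref{eq-commute-derived} from Lemma~\ref{lem:affine-derived-decomp} with the underived Proposition~\ref{p:pull-back-grading}; you merely spell out the last step explicitly via resolutions rather than citing the square directly, which amounts to the same thing.

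Two small inaccuracies worth noting, neither of which breaks the argument. First, the square you label cartesian is not: $Y\times_X X' \cong X\times_S(T\times_S S')$, whereas $Y'' \cong X\times_S T''$, and these differ once you replace $T\times_S S'$ by a further affine cover $T''$. Fortunately, only the 2-commutativity of the square is needed to get $(Y''\to Y)^*\circ g^* \cong (g'')^*\circ(X'\to X)^*$ from pseudofunctoriality of $(-)^*$, so you should just drop the word ``cartesian.'' Second, since $\D_\qc$ is the unbounded derived category, ``ordinary flat resolution of a $\chi$-homogeneous object'' should be an h-flat (K-flat) resolution of a complex, chosen so that it is h-flat both in $\Qcoh$ and in the ambient module category, as in the proof of Lemma~\ref{lem:affine-derived-decomp}; applying the exact projector $\pi_\chi$ termwise to such a resolution does produce a direct summand, which remains h-flat, so the rest of your argument goes through.
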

\begin{proof}
As a formal consequence of adjunction,
it is enough to verify that the pull-back $g^*$ respects the decomposition.
By a similar reduction argument as in the last paragraph of
the proof of Theorem~\ref{t:chi-splitting-D-qc-X},
we reduce to the case when $S$ and $T$ are affine and the gerbes are trivial.
Now the result follows from the underived setting in Proposition~\ref{p:pull-back-grading}
and the commutative diagram \eqref{eq-commute-derived} in Lemma~\ref{lem:affine-derived-decomp}.
\end{proof}

\begin{proposition}
  \label{t:equivalence-epi-derived}
  Let $X$ and $X'$ be gerbes over $S$ banded by diagonalizable groups $\Delta$ and~$\Delta'$,
  respectively, and let $\rho\colon X \to X'$ be a morphism over~$S$ with induced morphism $\varphi\colon \Delta_S \to \Delta'_S$ on the bands.
  Fix a character $\chi'\colon \Delta'_S \to \GGms{S}$, and let $\chi =
  \chi' \circ \varphi$.
  Then the pull-back functor $\rho^*$ takes $\chi'$-homogeneous
  objects to $\chi$-homogeneous 
  objects and induces an equivalence
  \begin{equation}
    \label{eq:restricted-pb-derived}
    \rho^*\colon \D_{\qc,\chi'}(X') \sira \D_{\qc,\chi}(X)
  \end{equation}
  of triangulated categories.
  In particular, if $X' = S$ then $\rho^*\colon \D_\qc(S) \to \D_\qc(X)$ is fully faithful with 
  essential image consisting of those objects that are
  homogeneous for the trivial character. 
\end{proposition}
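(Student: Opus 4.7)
The plan is to mirror the proof of Proposition~\ref{t:equivalence-epi}, replacing each underived ingredient with its derived counterpart from Theorem~\ref{t:chi-splitting-D-qc-X}, Proposition~\ref{p:pull-back-grading-derived}, and Lemma~\ref{lem:affine-derived-decomp}. First I would check that $\rho^*$ maps $\D_{\qc,\chi'}(X')$ into $\D_{\qc,\chi}(X)$: since $\rho$ is flat by Remark~\ref{rem-gerbe-ff}, pull-back commutes with cohomology, so $\HH^n(\rho^*\mathcal{F}) \cong \rho^*\HH^n(\mathcal{F})$, and the latter is $\chi$-homogeneous by the underived Proposition~\ref{t:equivalence-epi}.

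Next I would handle the special case $X' = S$, namely that $\alpha^*\colon \D_\qc(S) \to \D_{\qc,e}(X)$ is an equivalence, where $e$ denotes the trivial character. Using Proposition~\ref{p:pull-back-grading-derived} and flat base change (applicable by Remark~\ref{rem-gerbe-concentrated}), both the unit $\id \to \alpha_*\alpha^*$ and the restricted counit $\alpha^*\alpha_* \to \id$ on the $e$-homogeneous summand are compatible with smooth pull-back along maps $T \to S$. Hence one may verify they are isomorphisms after a smooth cover that trivializes the gerbe, at which point Lemma~\ref{lem:affine-derived-decomp} identifies $\alpha^*$ and $\alpha_*$ with the corresponding functors on $\D(\Qcoh)$. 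In this reduced setting the claim follows from the underived special case of Proposition~\ref{t:equivalence-epi}, together with the exactness of $\alpha_*$ on $\Qcoh_e(X)$.

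For the general case I would follow the line-bundle argument verbatim. Consider the fiber product $X\times_S X'$ with projections $p, q$ and the section $\tau = (\id, \rho)$, and let $\mathcal{L} \in \Qcoh_{(\chi^{-1},\chi')}(X\times_S X')$ with $\tau^*\mathcal{L} \cong \mathcal{O}_X$ be the line bundle constructed in the proof of Proposition~\ref{t:equivalence-epi}. Applying the special case established above to the gerbes $p$ (banded by $\Delta'$) and $q$ (banded by $\Delta$), and combining with Proposition~\ref{p:pull-back-grading-derived}, yields equivalences
\begin{equation*}
p^*\colon \D_{\qc,\chi}(X) \sira \D_{\qc,(\chi,e')}(X\times_S X'), \qquad
q^*\colon \D_{\qc,\chi'}(X') \sira \D_{\qc,(e,\chi')}(X\times_S X').
\end{equation*}
By the monoidal statement of Theorem~\ref{t:chi-splitting-D-qc-X}, tensoring with $\mathcal{L}^\vee$ carries the $(e,\chi')$-summand onto the $(\chi,e')$-summand, so $p_*(\mathcal{L}^\vee \otimes q^*(-))$ is an equivalence $\D_{\qc,\chi'}(X') \sira \D_{\qc,\chi}(X)$. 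The chain of natural isomorphisms used in the underived proof, exploiting $p\tau = \id$ and $q\tau = \rho$, still identifies this composite with $\rho^*$.

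The main obstacle is the bookkeeping for the special case: once flat base change and Lemma~\ref{lem:affine-derived-decomp} allow the reduction to derived categories of honest quasi-coherent sheaves on a trivial gerbe over an affine base, the remaining step is a straightforward repetition of the underived picture, and the line-bundle argument then ports to the derived setting without modification.
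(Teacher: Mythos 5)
Your proposal is correct and follows the paper's own argument: it reduces the first statement and the special case $X' = S$ to the underived result via flatness, flat base change, and Lemma~\ref{lem:affine-derived-decomp}, and then repeats the line-bundle argument of Proposition~\ref{t:equivalence-epi} on the level of derived categories — which is exactly what the paper does, the paper simply saying "proceed exactly as in the proof of Proposition~\ref{t:equivalence-epi}" where you spell out the details.
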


\begin{proof}
Since $\rho$ is flat (see Remark~\ref{rem-gerbe-ff}),
the pull-back $\rho^*$ commutes with
taking cohomology (see~\cite[Equation~(1.9)]{hr2017}).
In particular,
the first statement follows from the underived case,
which is given by Proposition~\ref{t:equivalence-epi}.

Let us first consider the special case $X' = S$.
By Remark~\ref{rem-gerbe-concentrated}, the morphism $\rho$ is concentrated,
so the push-forward $\rho_*$ respects flat base change by \cite[Theorem~2.6(4)]{hr2017}.
Hence the statement can be verified locally.
In particular, we may assume that the gerbes are trivial and that $S$ is affine.
Hence we may use the equivalences in Lemma~\ref{lem:affine-derived-decomp}.
Since $\rho^*$ and $\rho_*$ are exact,
the statement follows from the underived case,
which is Proposition~\ref{t:equivalence-epi}.

For the general case, we proceed exactly as in
the proof of Proposition~\ref{t:equivalence-epi} on the level of derived
categories.
\end{proof}

Similarly as in the underived case, we get the following corollaries.
\begin{corollary}
\label{cor:twisted-power-derived}
Let $S$ be an algebraic stacks and let $\alpha\colon X \to S$ be a gerbe banded by $\Delta$,
and choose a character $\chi\colon \Delta_S \to \GGms{S}$.
Then we have an equivalence
\begin{equation*}
\D_{\qc,\chi_*\alpha}(S)\cong \D_{\qc, \chi}(X) 
\end{equation*}
of categories, where $\chi_*\alpha$ is defined as in Construction~\ref{cons-gerbe-pf}.
In the particular case when $\Delta = \GGm$,
this gives the equivalence
\begin{equation*}
\D_{\qc,\alpha^d}(S)\cong \D_{\qc,d}(X) 
\end{equation*}
for any $d \in \mathbb{Z}$, where $\alpha^d$ is defined as in Construction~\ref{cons-gerbe-operations}.
\end{corollary}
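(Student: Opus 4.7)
The plan is to deduce the statement directly from Proposition~\ref{t:equivalence-epi-derived}, in exact parallel with the way Corollary~\ref{cor:twisted-power} is deduced from Proposition~\ref{t:equivalence-epi} in the underived setting.

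First, I would recall from Construction~\ref{cons-gerbe-pf} that associated with the character $\chi\colon \Delta_S \to \GGms{S}$ we have a $\GGm$-gerbe $\chi_*\alpha\colon \chi_*X \to S$, together with a morphism of gerbes $\rho\colon X \to \chi_*X$ over $S$ whose induced morphism on bands is $\chi$ itself. Note that both $X$ and $\chi_*X$ are gerbes banded by diagonalizable groups, so Proposition~\ref{t:equivalence-epi-derived} applies to $\rho$.

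Next, I would apply Proposition~\ref{t:equivalence-epi-derived} with $\chi'$ equal to the identity character $\id\colon \GGm \to \GGm$ on the target gerbe $\chi_*X$. By construction, the composition $\chi' \circ \varphi = \id \circ \chi$ is just $\chi$, so the proposition furnishes an equivalence
\begin{equation*}
  \rho^* \colon \D_{\qc, \id}(\chi_*X) \sira \D_{\qc, \chi}(X).
\end{equation*}
By Definition~\ref{def:derived-twisted}, the left-hand side is by notation $\D_{\qc, \chi_*\alpha}(S)$, which yields the first claimed equivalence. For the case $\Delta = \GGm$, I would specialize $\chi$ to be the $d$-th power map $p_d$; then $\chi_*\alpha$ coincides with $\alpha^d$ by Construction~\ref{cons-gerbe-operations}, and $\D_{\qc, p_d}(X)$ is what the convention denotes by $\D_{\qc, d}(X)$, so substituting into the first equivalence produces the second.

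No significant obstacle is anticipated: the entire content has already been absorbed into Proposition~\ref{t:equivalence-epi-derived}, and only a bookkeeping step of unravelling the notational conventions in Definition~\ref{def:derived-twisted} together with Constructions~\ref{cons-gerbe-pf} and~\ref{cons-gerbe-operations} remains. If anything, the one place to be a little careful is to confirm that the morphism $\rho$ produced by Construction~\ref{cons-gerbe-pf} genuinely sits in the situation covered by Proposition~\ref{t:equivalence-epi-derived}, but this is immediate since both source and target are gerbes banded by diagonalizable groups over $S$.
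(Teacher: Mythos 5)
Your proposal is correct and matches the paper's own proof exactly: the paper likewise deduces the result by applying Proposition~\ref{t:equivalence-epi-derived} to the morphism $\rho\colon X \to \chi_*X$ from Construction~\ref{cons-gerbe-pf}, in direct parallel with the underived Corollary~\ref{cor:twisted-power}. You have merely spelled out the bookkeeping (taking $\chi'=\id$ and unwinding the notation of Definition~\ref{def:derived-twisted}) which the paper leaves implicit.
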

\begin{proof}
This is proven similarly as Corollary~\ref{cor:twisted-power} by using Proposition~\ref{t:equivalence-epi-derived}
instead of Proposition~\ref{t:equivalence-epi}.
\end{proof}

\begin{corollary}[{cf.~\cite[Theorem 2.2.4]{caldararu2000}}]
\label{cor:twisted-tensor-derived}
Let $S$ be an algebraic stack and let $\alpha\colon X \to S$ and~$\beta\colon Y \to S$ be $\GGm$-gerbes.
Then the tensor product and the internal Hom-functor on $\D_\qc(X\times_S Y)$ induce triangulated functors
\begin{align*}
\otimes & \colon \D_{\qc, \alpha}(S) \times \D_{\qc, \beta}(S)
\to \D_{\qc, \alpha\beta}(S),\\
\sheafHom & \colon \D_{\qc, \alpha}(S)^\opp \times \D_{\qc, \beta}(S)
\to \D_{\qc, \alpha^{-1}\beta}(S).
\end{align*}
Furthermore, if $f\colon T \to S$ is an arbitrary morphism of algebraic stacks,
then we get an induced pair of adjoint functors
\begin{equation*}
  f^* \colon \D_{\qc, \alpha}(S) \to \D_{\qc, f^*\alpha}(T),
  \qquad
  f_* \colon \D_{\qc, f^*\alpha}(T) \to \D_{\qc, \alpha}(S).
\end{equation*}
\end{corollary}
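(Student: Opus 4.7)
The plan is to mimic the proof of the underived analog (Corollary~\ref{cor:twisted-tensor}) essentially verbatim, substituting the derived versions of the key tools that have already been established earlier in this section.

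First, I would note that $X\times_S Y$ carries a natural $\GGm\times\GGm$-gerbe structure over~$S$ (Example~\ref{ex:simple-gerbes}) for which the two projections induce the projections on the bands. Applying Proposition~\ref{t:equivalence-epi-derived} to each of the two projections $\GGm\times\GGm \to \GGm$, together with the character group identification $\Hom(\GGm,\GGm)\cong\bZ$, produces equivalences
\begin{equation*}
\D_{\qc,\alpha}(S) \cong \D_{\qc,(1,0)}(X\times_S Y),\qquad
\D_{\qc,\beta}(S) \cong \D_{\qc,(0,1)}(X\times_S Y).
\end{equation*}
Similarly, applying Proposition~\ref{t:equivalence-epi-derived} to the multiplication $m\colon \GGm\times\GGm\to\GGm$ and to the map $(x,y)\mapsto x^{-1}y$ gives
\begin{equation*}
\D_{\qc,\alpha\beta}(S) \cong \D_{\qc,(1,1)}(X\times_S Y),\qquad
\D_{\qc,\alpha^{-1}\beta}(S) \cong \D_{\qc,(-1,1)}(X\times_S Y),
\end{equation*}
using Construction~\ref{cons-gerbe-operations} to interpret the left-hand sides.

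With these identifications in place, the bifunctors $\otimes$ and $\sheafHom$ are obtained directly from the statement about the monoidal structure in Theorem~\ref{t:chi-splitting-D-qc-X}: the derived tensor product of a $(1,0)$-homogeneous and a $(0,1)$-homogeneous object is $(1,1)$-homogeneous, while the internal hom from a $(1,0)$-homogeneous object to a $(0,1)$-homogeneous object is $(-1,1)$-homogeneous. Both are triangulated bifunctors because this is the case for $\otimes$ and $\sheafHom$ on $\D_\qc(X\times_S Y)$ and the equivalences above are triangulated.

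Finally, for the pull-back/push-forward statement along $f\colon T\to S$, let $g\colon X\times_S T \to X$ denote the base change of $f$ along $\alpha$. By Proposition~\ref{p:pull-back-grading-derived}, both $g^*$ and $g_*$ preserve homogeneous components; restricting to the identity character then gives the adjoint pair $(f^*, f_*)$ between $\D_{\qc,\alpha}(S)$ and $\D_{\qc,f^*\alpha}(T)$. I do not foresee any real obstacle here: the whole argument is a formal bookkeeping exercise, and the only step that is not purely formal is the invocation of Theorem~\ref{t:chi-splitting-D-qc-X} for multiplicativity of the homogeneous decomposition under the tensor product, which has already been proved.
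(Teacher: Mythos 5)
Your proof is correct and follows the same route as the paper, which indeed transfers the argument of Corollary~\ref{cor:twisted-tensor} to the derived setting by substituting Theorem~\ref{t:chi-splitting-D-qc-X}, Proposition~\ref{p:pull-back-grading-derived} and Proposition~\ref{t:equivalence-epi-derived} for their underived analogues. The identification of the relevant characters $(1,0)$, $(0,1)$, $(1,1)$, $(-1,1)$ and the handling of $f^*$, $f_*$ match the paper's own derivation.
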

\begin{proof}
This is proven similarly as Corollary~\ref{cor:twisted-tensor} by using 
Theorem~\ref{t:chi-splitting-D-qc-X},
Proposition~\ref{p:pull-back-grading-derived}
and~Proposition~\ref{t:equivalence-epi-derived}
instead of
Theorem~\ref{t-qc-gerbe-split},
Proposition~\ref{p:pull-back-grading}
and~Proposition~\ref{t:equivalence-epi}.
\end{proof}

We conclude the section with two remarks regarding alternative
approaches for proving Theorem~\ref{t:chi-splitting-D-qc-X}.
\begin{remark}
\label{rem-trivial-case}
Note that Theorem~\ref{t:chi-splitting-D-qc-X} is trivial whenever the functor \eqref{eq-equiv-qcoh}
is a monoidal equivalence.
By Lieblich \cite[Proposition~2.2.4.6]{lieblich2004},
the functor \eqref{eq-equiv-qcoh} is an equivalence whenever $S$ is a quasi-compact,
separated scheme.
More generally, it follows from the work by Hall, Neeman and~Rydh \cite[Theorem~1.2]{hnr2018}
that \eqref{eq-equiv-qcoh} is an equivalence whenever
$S$ is an algebraic space which is either noetherian or quasi-compact with affine diagonal.
The aforementioned theorem applies since in this case,
the category $\D_\qc(X)$ is compactly generated by $|A|$ objects by
\cite[Theorem~6.9]{hr2017} 
applied to the presheaf
$
(T \to S) \mapsto \D_\qc(X_T)
$
of triangulated categories.
Since we are not going to use this result,
we leave the details of verifying the hypotheses of the
cited theorem to
the reader (cf.~\cite[Example~9.2]{hr2017}).
We do not know whether the equivalence \eqref{eq-equiv-qcoh} is monoidal in this generality.

Finally, it should be emphasized that there certainly are interesting cases when
\eqref{eq-equiv-qcoh} is not an equivalence. 
For instance, consider the $\GGm$-gerbe in Example~\ref{ex-gerbes}.\ref{it-universal-projective}.
The functor \eqref{eq-equiv-qcoh} is not an
equivalence for $X = \BB\GL_n$ if $n \geq 2$
by \cite[Theorem~1.3]{hnr2018},
so Theorem~\ref{t:chi-splitting-D-qc-X} seems to be non-trivial even in this basic case.
\end{remark}

\begin{remark}
\label{rem:decomposition-O-modules}
Theorem~\ref{t:chi-splitting-D-qc-X} would be trivial if
the category $\Mod(X_\liset, \mathcal{O}_X)$ admitted a decomposition
similar 
to the one for $\Qcoh(X)$ described in Theorem~\ref{t-qc-gerbe-split}.
We do not know whether this is the case,
but we suspect it is not.

In \cite[Exposé 1, Proposition~4.7.2]{sga-3-1-2011}, it is stated
that the category of (not necessarily quasi-coherent) sheaves of $G-\mathcal{O}_S$-modules
on the small Zariski site
is equivalent to the category of sheaves of $\mathcal{A}$-comodules for any affine group scheme $G = \Spec_S \mathcal{A}$ over
a scheme $S$.
If this were true, then it should be possible to adapt the proof
to our situation and to get a decomposition of $\Mod(X_\liset, \mathcal{O}_X)$.
However, we suspect that the previously cited proposition might be wrong, 
since it uses \cite[Exposé 1, Proposition~4.6.4]{sga-3-1-2011}
which in turn uses the fact that the canonical morphism
\begin{equation}
  \label{eq:qcoh-push-pull}
  \mathcal{F}\otimes_S \mathcal{A} \to \alpha_\ast\alpha^\ast\mathcal{F}
\end{equation}
is an isomorphism whenever $\mathcal{F}$ is quasi-coherent and
$\alpha\colon X = \Spec_S \mathcal{A} \to S$ is affine.
But this is certainly not true in general if we drop the condition that $\mathcal{F}$ be quasi-coherent. 

Let for example $S$ be a scheme with an open subscheme $U$
such that the inclusion $j \colon U \ra S$ 
is affine and quasi-compact
and $\mathcal{F}=j_! \mathcal{O}_U$ is not
quasi-coherent. Then the canonical morphism $j_!
\mathcal{O}_U \cong \mathcal{F} \otimes j_*\mathcal{O}_U \ra
j_*j^* \mathcal{F} \cong j_*\mathcal{O}_U$ is not an
isomorphism because $j_*\mathcal{O}_U$ is quasi-coherent.
Let $G:=S \sqcup U$ be the disjoint union. Then
$\alpha \colon G \ra S$ is in the obvious way an
affine \'etale group
scheme over $S$. In this case 
$\mathcal{A}=\alpha_*\mathcal{O}_G=
\mathcal{O}_S \oplus
j_*\mathcal{O}_U$ and the morphism
\eqref{eq:qcoh-push-pull} is not an isomorphism for 
$\mathcal{F}=j_! \mathcal{O}_U$.
\end{remark}


\section{Semi-orthogonal decompositions for Brauer--Severi schemes}
\label{sec:brau-severi-vari}
In this section, we demonstrate how to use the theory of gerbes developed by Giraud to obtain a
simple and conceptually appealing proof for the existence of the semi-orthogonal decomposition
of the derived category of a Brauer--Severi scheme given by Bernardara~\cite[Theorem~4.1]{bernardara-BS-schemes}.
We start by recalling some basic facts on Brauer--Severi schemes and their trivializing gerbes
from \cite[Example~V.4.8]{giraud1971}.

Let $S$ be an arbitrary algebraic stack.
An algebraic stack $P$ over $S$ is called a \emph{Brauer--Severi scheme} over $S$
if it is \emph{fppf}-locally (on $S$) isomorphic to a projectivized
vector bundle.
Of course $P$ need not be a scheme in general;
the terminology is motivated by the fact that the structure
morphism to $S$ is
representable by schemes. 

Given a Brauer--Severi scheme $\pi\colon P \to S$, we consider
its \emph{gerbe of trivializations} and denote it by
$\beta\colon B \to S$.  Let $t\colon T \to S$ be a morphism.  The
$t$-points of $B$ are pairs $(\mathcal{V}, v)$, where
$\mathcal{V}$ is a finite locally free sheaf of
$\mathcal{O}_T$-modules 
and
$v\colon P_{T} \xrightarrow{\sim} \mathbb{P}(\mathcal{V}^\vee)$
is an isomorphism over $T$.
The gerbe $B$ is endowed with a tautological
finite locally free sheaf $\mathcal{E}$ of $\mathcal{O}_{B}$-modules.
This is obtained on $t$-points $(\mathcal{V}, v)$ as above by
simply forgetting the isomorphism $v$.  The gerbe $B$ also has an
obvious banding by $\GGm$ given on $t$-points as above by the
usual action by $\mathcal{O}^\times_{T}(T)$ on $\mathcal{V}$. 
In particular,
the tautological
sheaf $\mathcal{E}$ is a $\beta$-twisted sheaf over $S$ in the
sense of Definition~\ref{def:twisted}.

Note that by construction, the gerbe of trivializations fits into a 2-cartesian diagram
\begin{equation}
  \label{eq:tautological-triv}
  \xymatrix{
    {\bP(\mathcal{E}^\vee)} \ar[r]^-{\gamma}\ar[d]_-{\rho} 
    & 
    {P} \ar[d]^-\pi\\
    {B} \ar[r]_-\beta 
    & {S,}
  }
\end{equation}
which we call the \emph{tautological trivialization diagram}.

The class $[\beta]$ of $\beta \colon B \ra S$ (see Theorem~\ref{t:gerbes-H2}) in $\HH^2(S_\fppf, \GGm)$
is called the \emph{Brauer class} of the Brauer--Severi scheme $\pi\colon P \to S$.
This class vanishes if and only if~$P$
is a projectivized vector bundle over~$S$.

\begin{remark}
\label{rem-universal-gerbe-of-trivializations}
Consider the Brauer--Severi scheme $\pi\colon [\bP^{n - 1}/\PGL_n] \to \BB \PGL_n$.
Its gerbe of trivializations can be identified with the $\GGm$-gerbe
$\alpha \colon \BB \GL_n \to \BB \PGL_n$
in Example~\ref{ex-gerbes}.\ref{it-universal-projective}.
Note that the stack $\BB \PGL_n$ classifies Brauer--Severi schemes.
Given a Brauer--Severi scheme $P$ over an arbitrary algebraic stack $S$,
its gerbe of trivializations can be identified with the pull-back of $\alpha$
along the morphism $S \to \BB \PGL_n$ corresponding to $P$.
\end{remark}

\begin{theorem}
  \label{t:sod-brauer-severi}
  Let $S$ be an algebraic stack and $\pi\colon P \to S$ a
  Brauer--Severi scheme of relative dimension $n \geq 0$ over $S$.
  Using the notation from above,
  we consider the tautological trivialization diagram~\eqref{eq:tautological-triv}.
  Then the functors 
  \begin{equation}
  \label{eq:BS-full-faithful}
  \Phi_i \colon \D_{\qc, i}(B) \to \D_{\qc}(P), \qquad
  \mathcal{F} \mapsto \gamma_*(\mathcal{O}(i)\otimes \rho^*\mathcal{F}),
  \end{equation}
  are fully faithful for each $i$.
  Moreover, for each $a \in \mathbb{Z}$,
  we have a semi-orthogonal decomposition
  \begin{equation}
  \label{eq:BS-sod}
  \D_\qc(P) = \langle \im \Phi_a, \ldots, \im \Phi_{a + n} \rangle
  \end{equation}
  into right admissible categories.
\end{theorem}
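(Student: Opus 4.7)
The strategy is to reduce the assertion to Orlov's projective bundle formula~\cite[Theorem~6.7]{BS-conservative} for $\rho\colon \bP(\mathcal{E}^\vee)\to B$, combined with the derived gerbe splitting of Theorem~\ref{t:chi-splitting-D-qc-X}. Since the tautological trivialization diagram~\eqref{eq:tautological-triv} is 2-cartesian, $\gamma$ is a $\GGm$-gerbe, and the special case $X'=P$ of Proposition~\ref{t:equivalence-epi-derived} yields an equivalence $\gamma^*\colon \D_\qc(P)\sira\D_{\qc,0}(\bP(\mathcal{E}^\vee))$ whose quasi-inverse is the restriction of $\gamma_*$. Thus it suffices to realize the claimed decomposition inside $\D_{\qc,0}(\bP(\mathcal{E}^\vee))$ and then transport it across this equivalence.

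To identify $\Phi_i$ correctly, I first observe that $\mathcal{O}(1)$ has character $-1$ with respect to $\gamma$: indeed $\rho_*\mathcal{O}(1)\cong\mathcal{E}^\vee$ is of character $-1$ by the monoidal part of Theorem~\ref{t-qc-gerbe-split} (since $\mathcal{E}$ is of character $+1$), and $\rho_*$ preserves characters by Proposition~\ref{p:pull-back-grading-derived}. Hence the functor $\mathcal{F}\mapsto \mathcal{O}(i)\otimes\rho^*\mathcal{F}$ sends $\D_{\qc,i}(B)$ into $\D_{\qc,0}(\bP(\mathcal{E}^\vee))$, and $\Phi_i$ factors as this functor composed with $\gamma_*$.

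Now Orlov's formula provides fully faithful functors $\rho^*(-)\otimes\mathcal{O}(k)\colon \D_\qc(B)\to\D_\qc(\bP(\mathcal{E}^\vee))$ for $k = a, \dots, a+n$, together with the semi-orthogonal decomposition
\[
  \D_\qc(\bP(\mathcal{E}^\vee)) = \langle \rho^*\D_\qc(B)\otimes\mathcal{O}(a),\dots,\rho^*\D_\qc(B)\otimes\mathcal{O}(a+n)\rangle.
\]
Because tensoring by $\mathcal{O}(k)$ shifts $\gamma$-characters by $-k$, the character-$0$ piece of the $k$-th component is exactly $\rho^*\D_{\qc,k}(B)\otimes\mathcal{O}(k)$. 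Restricting the SOD to $\D_{\qc,0}(\bP(\mathcal{E}^\vee))$ and then transporting via the equivalence $\gamma_*$ produces~\eqref{eq:BS-sod}, and each $\Phi_i$ inherits fully faithfulness from the corresponding Orlov-component functor.

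The main step requiring care is the claim that restricting the Orlov SOD to the character-$0$ subcategory remains an SOD. This follows from Theorem~\ref{t:chi-splitting-D-qc-X} together with the observation that each Orlov-component functor $\rho^*(-)\otimes\mathcal{O}(k)$ respects the character decomposition up to the shift by $-k$: the triangulated projection functors $\D_\qc(\bP(\mathcal{E}^\vee))\to\D_{\qc,\chi}(\bP(\mathcal{E}^\vee))$ are exact, so any semi-orthogonal filtration of an object in $\D_{\qc,0}$ decomposes character-by-character. Beyond this compatibility check, the remainder of the argument is routine bookkeeping.
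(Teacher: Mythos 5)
Your proof follows essentially the same route as the paper: Orlov's projective bundle SOD for $\rho\colon \bP(\mathcal{E}^\vee)\to B$, the character decompositions of $\D_\qc(B)$ and $\D_\qc(\bP(\mathcal{E}^\vee))$ from Theorem~\ref{t:chi-splitting-D-qc-X}, the observation that the Orlov component functors shift the $\gamma$-character by $-i$, restriction of the SOD to the character-$0$ subcategory, and transport along the equivalence of $\D_\qc(P)$ with $\D_{\qc,0}(\bP(\mathcal{E}^\vee))$ from Proposition~\ref{t:equivalence-epi-derived}. The one place where your reasoning is a bit loose is the claim that $\mathcal{O}(1)$ has $\gamma$-character $-1$: from $\rho_*\mathcal{O}(1)\cong\mathcal{E}^\vee$ (character $-1$) and the fact that $\rho_*$ respects the character decomposition you cannot directly infer the character of $\mathcal{O}(1)$, since $\rho_*$ is far from conservative on homogeneous pieces (e.g.\ it kills $\mathcal{O}(-1)$). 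The inference becomes valid once you note that $\mathcal{O}(1)$ is a line bundle, hence indecomposable, hence concentrated in a single character; but the paper's argument, using the surjection $\rho^*(\mathcal{E}^\vee)\sra\mathcal{O}(1)$ together with the fact that the homogeneous subcategories are Serre subcategories, is cleaner and avoids that extra step. Apart from this small point the proposal is correct.
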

\begin{remark}
  \label{r:image-twisted}
  In particular (see~Corollary~\ref{cor:twisted-power-derived}),
  the essential image $\im \Phi_i$ is equivalent to the
  category $\D_{\qc, \beta^i}(S)$ of
  $\beta^i$-twisted complexes on $S$ 
  in the sense of Definition~\ref{def:derived-twisted}.
  Recall that, as explained above,
  $[\beta] \in \HH^2(S_\fppf, \GGm)$ is the Brauer class of $P$.
\end{remark}
\begin{proof}
Since $\rho$ is the projectivization of a vector bundle of rank $n + 1$,
it follows from \cite[Theorem~6.7]{BS-conservative} that the functors
$$
\Psi_i \colon \D_\qc(B) \to \D_{\qc}(\mathbb{P}(\mathcal{E}^\vee)), \qquad
\mathcal{F} \mapsto \mathcal{O}(i)\otimes \rho^*\mathcal{F},
$$
are fully faithful and that
$$
\D_\qc(\mathbb{P}(\mathcal{E}^\vee)) = \langle \im \Psi_{a}, \ldots,  \im \Psi_{a + n}\rangle
$$
is a semi-orthogonal decomposition for any $a \in \mathbb{Z}$.
Since both $\beta$ and $\gamma$ are $\GGm$-gerbes,
Theorem~\ref{t:chi-splitting-D-qc-X} provides equivalences
  \begin{equation*}
    \prod_{d \in \bZ} \D_{\qc, d}(B) 
    \sira 
    \D_\qc(B),
    \qquad
    \prod_{d \in \bZ} \D_{\qc, d}(\bP(\mathcal{E}^\vee)) 
    \sira
    \D_\qc(\bP(\mathcal{E}^\vee)).
  \end{equation*}
By Proposition~\ref{p:pull-back-grading-derived},
the pull-back $\rho^*$ preserves $d$-homogeneous objects for each
$d \in \bZ$.
Since $\mathcal{E}^\vee$ is homogeneous of degree -1 we see from
the canonical surjection $\rho^*(\mathcal{E}^\vee) \to
\mathcal{O}(1)$ and~Theorem~\ref{t:chi-splitting-D-qc-X} that
$\mathcal{O}(1)$ is homogeneous of degree -1. This shows that
$\Psi_i$ shifts the degree by $-i$.
Hence we get a semi-orthogonal decomposition
$$
\D_{\qc, 0}(\mathbb{P}(\mathcal{E}^\vee)) = \langle \im \Psi'_{a}, \ldots,  \im \Psi'_{a + n}\rangle,
$$
where $\Psi'_i$ denotes the restriction of $\Psi_i$ to $\D_{\qc, i}(B)$.
Since $\gamma$ is a gerbe, the pull-back $\gamma^*$ is fully faithful with essential image
$\D_{\qc, 0}(\mathbb{P}(\mathcal{E}^\vee))$ by
Proposition~\ref{t:equivalence-epi-derived}.
Therefore the theorem follows from the identities $\Phi_i = \gamma_*\circ\Psi'_i$.
\end{proof}

\begin{remark}
Note that Theorem~\ref{t:sod-brauer-severi} differs from the corresponding decomposition
in \cite[Theorem~4.1]{bernardara-BS-schemes} with respect to the sign of the Brauer class $\beta$.
This is simply due to different sign conventions.
We follow the convention used by Giraud in \cite[Example~V.4.8]{giraud1971}.
\end{remark}

Similarly, as in the corresponding theorem for projectivized vector bundles
(see \cite[Theorem~6.7, Corollary~6.8]{BS-conservative}),
we also get semi-orthogonal decompositions for the categories of
perfect complexes, of
locally bounded pseudo-coherent complexes, and for the singularity categories, respectively.
Following the notation from \cite[Section~2.3]{BS-conservative},
we denote these categories by $\D_\pf(P)$, $\D^\locbd_\pc(P)$ and~$\D^\sg_\pc(P)$,
respectively.

\begin{corollary}
\label{c:sod-brauer-severi}
Keep the assumptions from Theorem~\ref{t:sod-brauer-severi}.
The functors $\Phi_i$ induce functors
\begin{align*}
\Phi^\pf_i & \colon \D_{\pf, i}(B) \to \D_{\pf}(P), \qquad
\Phi^\pc_i \colon \D^\locbd_{\pc, i}(B) \to \D^\locbd_{\pc}(P),\\
\Phi^\sg_i & \colon \D_{\sg, i}(B) \to \D_{\sg}(P),
\end{align*}
where the source categories are the full subcategories of
$i$-homogeneous objects in the categories
$\D_{\pf}(B)$, $\D^\locbd_{\pc}(B)$ and~$\D_\sg(B)$ introduced above,
respectively.
Furthermore, for each $a \in \mathbb{Z}$, this yields a semi-orthogonal decomposition
\begin{align}
\label{eq-sod-perfect}
\D_\pf(P) & = \langle \im \Phi^\pf_a, \ldots, \Phi^\pf_{a + n}\rangle,
\intertext{%
into admissible subcategories and semi-orthogonal decompositions
}
\D^\locbd_\pc(P) & = \langle \im \Phi^\pc_a, \ldots, \Phi^\pc_{a + n}\rangle,\\
\D_\sg(P) & = \langle \im \Phi^\sg_a, \ldots, \Phi^\sg_{a + n}\rangle
\end{align}
into right admissible subcategories.
In particular, if $S$ is Noetherian, we get the corresponding decomposition of $\D^\bd_\co(P) = \D^\locbd_\pc(P)$.
\end{corollary}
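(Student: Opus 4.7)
The plan is to reduce everything to the analogous statement for the projectivization $\rho\colon \bP(\mathcal{E}^\vee) \to B$, exactly as in the proof of Theorem~\ref{t:sod-brauer-severi}, by reusing the factorization $\Phi_i = \gamma_* \circ \Psi'_i$. The known input is the conservative descent result \cite[Corollary~6.8]{BS-conservative}, which provides semi-orthogonal decompositions of $\D_\pf(\bP(\mathcal{E}^\vee))$, $\D^\locbd_\pc(\bP(\mathcal{E}^\vee))$ and $\D_\sg(\bP(\mathcal{E}^\vee))$ with components given by the restrictions of the functors $\Psi_i = \mathcal{O}(i) \otimes \rho^*(-)$.

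First, I would verify that each of these three finer decompositions on $\bP(\mathcal{E}^\vee)$ restricts to a decomposition of the corresponding subcategory of degree-$0$ homogeneous objects. The point is that the idempotent endofunctors $\mathcal{F} \mapsto \mathcal{F}_d$ coming from Theorem~\ref{t:chi-splitting-D-qc-X} commute with flat pullback by Proposition~\ref{p:pull-back-grading-derived}, and after pulling back along a smooth cover $U \to S$ trivializing the gerbes they become ordinary direct-summand projections. Since perfectness, locally bounded pseudo-coherence, and the singularity category are all preserved under direct summands and can be checked locally in the smooth topology, the homogeneous components of a complex with any of these finiteness properties inherit the property. Because $\Psi_i$ shifts homogeneous degree by $-i$, as observed in the proof of Theorem~\ref{t:sod-brauer-severi}, intersecting the three finer decompositions with $\D_{\qc,0}(\bP(\mathcal{E}^\vee))$ yields semi-orthogonal decompositions on the degree-$0$ side whose $i$-th component is the essential image of the restriction of $\Psi_i$ to $\D_{\pf, i}(B)$, $\D^\locbd_{\pc, i}(B)$, or $\D_{\sg, i}(B)$, respectively.

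Second, I would transport the resulting decompositions to $P$ via the equivalence $\gamma^*\colon \D_\qc(P) \sira \D_{\qc,0}(\bP(\mathcal{E}^\vee))$ of Proposition~\ref{t:equivalence-epi-derived}. Since $\gamma$ is a $\GGm$-gerbe, it is smooth, concentrated and of cohomological dimension zero by Remark~\ref{rem-gerbe-concentrated}. Consequently $\gamma^*$ preserves each of $\D_\pf$, $\D^\locbd_\pc$ and $\D_\sg$, and the same is true of its restricted quasi-inverse $\gamma_*\vert_{\D_{\qc,0}}$ (reduce to a trivialization and use the exactness of taking invariants for a diagonalizable group, as in the proof of Lemma~\ref{lem:affine-derived-decomp}). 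Combining this with $\Phi_i = \gamma_*\circ \Psi'_i$ gives the three systems of fully faithful functors $\Phi_i^\pf$, $\Phi_i^\pc$, $\Phi_i^\sg$ and the claimed decompositions of $\D_\pf(P)$, $\D^\locbd_\pc(P)$ and $\D_\sg(P)$. The sharper admissibility in the perfect case, as opposed to merely right admissibility for the other two, is inherited from the corresponding statement on $\bP(\mathcal{E}^\vee)$ in \cite[Corollary~6.8]{BS-conservative}, where the left adjoints for the perfect case are supplied by Serre duality along $\rho$ using the relative dualizing line bundle. The final statement for $\D^\bd_\co(P)$ in the Noetherian case then follows from the identification $\D^\bd_\co(P) = \D^\locbd_\pc(P)$ recalled in the setup of \cite[Section~2.3]{BS-conservative}.

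The main obstacle is the first step: rigorously checking that the homogeneous-component projections, which are established globally only for quasi-coherent objects, restrict to the three finiteness subcategories. The plausible and, I expect, correct way around this is the local trivialization argument sketched above, where the projections become literal summand projections; once this is accepted, the remaining steps are essentially formal combinations of Theorem~\ref{t:sod-brauer-severi}, the cited descent result, and Proposition~\ref{t:equivalence-epi-derived}.
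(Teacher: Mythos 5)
Your proposal is correct and follows essentially the same strategy as the paper: reduce to the projectivization $\bP(\mathcal{E}^\vee)$ via the factorization $\Phi_i = \gamma_*\circ\Psi'_i$, use the decompositions for $\Psi_i$ supplied by \cite[Corollary~6.8]{BS-conservative}, and push down along the $\GGm$-gerbe $\gamma$. The paper's proof is shorter because it cites the formal lemma \cite[Corollary~5.19]{BS-conservative} for restricting a semi-orthogonal decomposition to a triangulated subcategory once the embeddings and their adjoints are known to preserve it, which sidesteps the technicality you worry about --- whether homogeneous-component projections respect the finiteness subcategories --- and which can also be avoided by observing that the uniqueness of the SOD filtration makes the filtration of a perfect (resp.\ pseudo-coherent) degree-$0$ complex simultaneously perfect (resp.\ pseudo-coherent) and degree-$0$.
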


\begin{proof}
The functors $\Phi_i$ and their right adjoints restrict to the subcategories
of perfect complexes and the subcategories of locally bounded pseudo-coherent
complexes, respectively.
Indeed, this follows from the corresponding properties for $\Psi_i$ (cf.~\cite[Corollary~6.8]{BS-conservative})
using the notation in the proof of Theorem~\ref{t:sod-brauer-severi}.
Hence the decompositions in the corollary into right admissible categories
follow formally from Theorem~\ref{t:sod-brauer-severi} (see~\cite[Corollary~5.19]{BS-conservative}).
Moreover, the restriction of each $\Psi_i$ to the category of
perfect complexes has a left adjoint
(cf.~\cite[Corollary~6.8]{BS-conservative}).
\end{proof}

\begin{remark}
\label{r:bernardara}
Let us explain why Bernardara's Theorem~4.1 from~\cite{bernardara-BS-schemes} is a special case of the semi-orthogonal
decomposition~\eqref{eq-sod-perfect}.
In Bernardara's setting,
the base $S$ is a Noetherian, separated scheme with the additional property that any two points is
contained in an open affine.
In fact, he only states that $S$ should be locally noetherian with the extra condition on points,
but the other properties are used implicitly since he uses results from C{\u{a}}ld{\u{a}}raru's thesis,
where the stronger assumptions are in effect \cite[Section~2.1]{caldararu2000}.
Bernardara's category $\D(S, \alpha)$ is the derived category of perfect complexes of $\alpha$-twisted coherent sheaves.
By the comparison by Lieblich mentioned in Remark~\ref{rem-twisted-2-cocycle},
this is equivalent to $\D_\pf(\coh_\alpha(S))$,
which is easily seen to be equivalent to $\D_\pf(\Qcoh_\alpha(S))$ by using that every quasi-coherent
sheaf on the gerbe corresponding to $\alpha$ is a direct limit of its coherent subsheaves~\cite[Theorem~15.4]{lmb2000}.
Now this category is equivalent to $\D_{\pf, \alpha}(S)$ since the functor \eqref{eq-equiv-qcoh} is an equivalence
under the assumptions on $S$ (see~Remark~\ref{rem-trivial-case}).
\end{remark}


\bibliographystyle{myalpha}
\bibliography{references}
\end{document}